\DeclareSymbolFont{symbols}{OMS}{cmsy}{m}{n}
\newtheorem{thm}{Theorem}[section]
\newtheorem{lem}[thm]{Lemma}
\newtheorem{cor}[thm]{Corollary}
\newtheorem{prop}[thm]{Proposition}
\theoremstyle{definition}
\newtheorem{defi}[thm]{Definition}
\newtheorem{example}[thm]{Example}
\theoremstyle{remark}
\newtheorem{rmk}[thm]{Remark}
\newtheorem{conj}[thm]{Conjecture}
\newcommand{\Hil}{\mathcal{H}}
\newcommand{\slot}{{~\cdot~}}
\newcommand{\SL}{\mathop{\mathsf{SL}}}
\newcommand{\PSL}{\mathop{\mathsf{PSL}}}
\newcommand{\U}{{\mathsf{U}}}
\newcommand{\Mob}{\mathsf{M\ddot ob}}
\newcommand{\W}{\mathcal{W}}
\newcommand{\cC}{\mathcal{C}}
\newcommand{\cB}{\mathcal{B}}
\newcommand{\A}{\mathcal{A}}
\newcommand{\cI}{\mathcal{I}}
\newcommand{\RR}{\mathbb{R}}
\newcommand{\TT}{\mathbb{T}}
\newcommand{\CC}{\mathbb{C}}
\newcommand{\ZZ}{\mathbb{Z}}
\newcommand{\NN}{\mathbb{N}}
\DeclareMathOperator{\Diff}{Diff}
\DeclareMathOperator{\End}{End}
\DeclareMathOperator{\Rep}{Rep}
\DeclareMathOperator{\Hom}{Hom}
\DeclareMathOperator{\diag}{diag}
\DeclareMathOperator{\Ad}{Ad}
\DeclareMathOperator{\im}{Im}
\DeclareMathOperator{\id}{id}
\DeclareMathOperator{\B}{B}
\DeclareMathOperator{\supp}{supp}
\DeclareMathOperator{\sign}{sign}
\DeclareMathOperator{\Aut}{Aut}
\newcommand{\punkt}{\,\mathrm{.}}
\newcommand{\komma}{\,\mathrm{,}}
\newcommand{\e}{\mathrm{e}}
\newcommand{\ima}{\mathrm{i}}
\renewcommand{\Im}{\im}
\DeclareRobustCommand{\eg}{e.g.\@\xspace}
\DeclareRobustCommand{\ie}{i.e.\@\xspace}
\DeclareRobustCommand{\etc}{%
    \@ifnextchar{.}%
        {etc}%
        {etc.\@\xspace}%
}
\def\u1net{{\A_\RR}}
\renewcommand{\H}{\Hil}
\DeclareMathOperator{\Out}{\mathsf{Out}}
\DeclareMathOperator{\Inn}{\mathsf{Inn}}
\renewcommand{\L}{\mathrm{L}}
\newcommand{\cG}{\mathcal{G}}
\newcommand{\utr}{\triangle}
\newcommand{\SOne}{\mathrm{S}^1}
\newcommand{\ide}{\mathrm{id}}
\newcommand{\ev}{\mathrm{ev}}
\newcommand{\coev}{\mathrm{coev}}
\DeclareMathOperator{\Ima}{Im}
\newcommand{\Borel}{\mathrm{Borel}}
\newcommand{\tRep}[1]{#1\text{--}\Rep}
\begin{document}
\date{\today}
\dateposted{\today}
\newcommand{\mytitle}{%
Anomalies for conformal nets associated with lattices and $T$-kernels }
\title{\mytitle}
\author{Marcel Bischoff}
\address{Independent Scholar, Columbus, OH 43209, USA} \email{marcel@localconformal.net}

\thanks{Supported by NSF DMS grant 1700192/1821162}
\author{Pradyut Karmakar}
\address{Department of Mathematics, Morton Hall 423, 1 Ohio University, Athens, OH 45701, USA}
\email{pk481519@ohio.edu} \dedicatory{This article is dedicated to the memory of Vaughan Fredrick Randall Jones}

\begin{abstract}
  Let $L\subseteq \RR^{n}$ an even lattice and $T_{L}=\RR^{n}/L$ the associated
  torus. Associated with $L$ we construct $T_{L}$--kernel on a hyperfinite factor
  type $\A_{L}$, \ie a monomorphism $T_{L}\to\Out(\A_{L})$, and compute Sutherland's
  \cite{Su1980} obstruction class in $H^{3}_{\Borel}(T_{L},\TT)\cong H^{4}(BT_{L}
  ,\ZZ)$, which is an invariant of the $T_{L}$--kernel and an obstruction to the
  existence of a twisted crossed product by $T_{L}$. As a Corollary, we obtain
  that for any $n$-torus $T$ any class in $H^{3}_{\mathrm{Borel}}(T,\TT)$ arises
  as an obstruction for a $T$-kernel on the hyperfinite type III${}_{1}$
  factor $R$.

  The construction is an analogue of the construction of Vaughan Jones for
  finite groups on the hyperfinite type II${}_{1}$ factor but is also motivated
  by and has applications to conformal nets. Namely, there is an associated local
  extension $\A_{L}\supseteq \A_{\RR^n}$ of conformal nets and the $T_{L}$--kernel
  corresponds to a family of $T_{L^\ast}$--twisted sectors representations whose
  anomaly (obstruction) can be identified with the inner product on $L$ seen as
  a class in $H^{4}(BT_{L},\ZZ)\cong \operatorname{Sym}^{2}(L,\ZZ)$.
\end{abstract}

\maketitle

\setcounter{tocdepth}{3}
\tableofcontents

\section{Introduction}
\label{sec:Introduction} 
Anomalies in quantum field theory have profound insights into the underlying symmetries and structures of fundamental physics.
 In theoretical physics, anomalies arise when a classical symmetry of a system fails to be preserved at the quantum level due to quantum defects. 
The break-down of the symmetry influence the behaviour of particles and fields in unexpected ways.

Group extensions play a crucial role in understanding anomalies by providing a mathematical framework to investigate the symmetries of quantum field theories. 
A group extension is a way of constructing a larger group from two given groups, typically involving a quotient group and a subgroup. 
In the context of anomalies, group extensions help elucidate how symmetries behave under certain transformations and how anomalies manifest as deviations from these expected symmetries.

In this paper, we study anomalies arising from compact groups $\SOne$ and $n$-dimensional torus for any $n \in \NN$.
Anomalies have a classical analogy with group extensions.
Let $N$ be a normal subgroup of $G$.
Then there is a canonical homomorphism $\pi_1 \colon G/N \to \Out(N)$ given by
\begin{align*}
G/N \ni [gN] \mapsto [\alpha_g] \in \Out(N), \text{where } \alpha_g(x)=gxg^{-1}~\text{for all}~ x \in N\, \text{ and for all }g \in G\,.
\end{align*}
If $[gN]=[hN]$, then $g^{-1}h \in N$, therefore $\Ad(g^{-1}h) \in \Inn(N)$.
Thus $[\alpha_{g^{-1}h}]=0$, which implies that $[\alpha_g]=[\alpha_h]$.
Thus $\pi_1$ is well-defined homomorphism. 
Note that if $H^{3}(G/N, Z(N))$ is trivial then group extensions always exist and all the extensions are identified with the group $H^{2}(G/N, Z(N))$.
However, if $H^{3}(G/N, Z(N))$ is non-trivial, then it is an obstruction to make the extension.

Sutherland generalized group extensions to von Neumann algebra extensions.
A von Neumann algebra $M$ is called an extension by a von Neumann algebra of $N$ by a locally compact group $G$ if $M$ is generated by a isomorphic copy of $N$ ($\iota(N)$) and a group of Borel unitaries ($\{\pi(g): g \in G\}$) normalizing $\iota(N)$ such that $G/\iota(N)$ forms a representation (see \cite{Su1980}) of $M$.
The class $[\omega] \in H^{3}_{\Borel}(G, \U(1))$ is the obstruction to make such extensions. 

The main goal of this paper is to construct anomaly (obstructions) using Loop group models.
Loop group models are conformal nets arising from representation of Loop groups.
In particular, we look into free bosons and nets associated with lattices, \ie $\U(1)$-current net.

Moreover, anomalies appear in pointed Fusion categories.
The class $[\omega] \in H^{3}(G, k^{\ast})$ is incorporated to the associator of the category where $k^{\ast}$ denotes multiplicative group of ground field $k$ and $G$ is a finite group.
For example, for a finite group $G$, take $G$--graded finite dimensional vector spaces over a field $k$, \ie objects are given by $V=\bigoplus_{g \in G} V_g$ and tensor product structure of two vector spaces $V$ and $W$ is given by 
\begin{align*}
(V \otimes W)_g:=\bigoplus_{hk=g} V_h \otimes W_k\,.
\end{align*}
Then the associator twisted by a $3$-cocycle $\omega$ is given by
\begin{align*}
&\alpha_{U, V, W}=\omega(g, h, k) \gamma\\
& \gamma((u \otimes v) \otimes w)=(u \otimes (v \otimes w)), u \in U_g, v \in V_h, w \in W_k\,. 
\end{align*}
We get category $\mathrm{Vec}_{G}^{\omega}$.
Furthermore, any pointed fusion category is isomorphic to $\mathrm{Vec}_{G}^{\omega}$ for some $3$-cocycle $\omega$ and some finite group $G$.

Let $M$ be a factor, \ie a von Neumann algebra with
trivial center. 
To an element $[\alpha]\in \Out(M)$ of order $n$, Connes associates
an invariant 
\begin{align*}
k\in\TT_{n}:=\{z\in \CC : z^{n}=1\}
\end{align*}
and showed that every invariant
arises for the hyperfinite type $\mathrm{II}_1$ factor in \cite[Remark~6.8]{Co1977}. 
More
generally, let $G$ be a locally compact group. A $G$--kernel is a monomorphism
$\theta\colon G\to \Out(M)$ that admits Borel lifts.

To a $G$--kernel, Sutherland associates an invariant in $H^{3}_{\Borel}(G,\U(1)
)$, the third $\Borel$ cohomology group of $G$. 
The invariant is called the obstruction because it is an obstruction to
the existence of a twisted crossed product of $M$ by $G$ implementing the $G$--kernel.
Sutherland's obstruction generalizes Connes in the following sense. 
For $[\alpha
]\in \Out(M)$ of order $n$, the map $m\mapsto[\alpha^{m}]$ defines the $\ZZ/n\ZZ$-kernel given as
$m\pmod n \mapsto[\alpha^{m}]$ and the two obstructions are related by a natural
isomorphism $H^{3}_{\Borel}(\ZZ/n\ZZ,\U(1)) \xrightarrow{\scriptstyle\sim}
\TT_{n}$.

Most importantly, $G$--kernels have a categorical viewpoint.
A $G$--kernel $\theta\colon G\to \Out(M)$ gives a strict 2-group $\cG$ with
objects
\begin{align*}
\cG:=\{\alpha\in\Aut(M) : \alpha\in [\theta_{g}] \text{ for some }g\in G\}\,,
\end{align*}
 morphisms
\begin{align*}
\cG(\alpha,\beta) = \{u\in \U(M): u\alpha(x)=\beta(x)u \text{ for all }x\in M\} \qquad \alpha, \beta \in \Aut(M)
\end{align*}
and monoidal structure $\alpha\otimes\beta=\alpha\circ\beta$. Thus
$\pi_{0}(\cG)\cong G$ and if $M$ is a factor, then $\pi_{1}(\cG)=Z(M)\cong \U(1)$
equipped with the trivial $G$--action and the element in
$H^{3}_{\Borel}(G,\U(1))$ classifies the isomorphism class of $\cG$, but the technicality occurs as we are using Borel cohomology.

The problem of realizing anomaly in operator algebras is not new.
For finite cyclic groups, in \cite {Co1977} Connes showed every obstruction arises in the hyperfinite $\mathrm{II}_1$ factor.
For $G$ a finite group, in \cite{Jo1980} Jones showed that every obstruction arises for the
hyperfinite type $\mathrm{II}_1$ factor in his pioneering work on classification of $\mathrm{II}_1$ factors. 
Later, Sutherland showed that in the hyperfinite $\mathrm{II}_1$ factor anomalies can be realized for discrete groups in \cite{Su1980}.
One can use Izumi's result based on Popa's
classification theory for subfactors (see \cite{Po1994-2}) to show that the same is true for the
hyperfinite type $\mathrm{III}_1$ factor.

Not much seems to be known about continuous groups. 
We have used the approach of  Doplicher--Haag--Roberts (also known as DHR) and generalization of Fredenhagen--Rehren--Schroer to compute the braiding of the sectors associated to ambient nets.
 Longo--Rehren in \cite{LoRe1995} have developed the method of $\alpha$-induction of sectors which is used to extend automorphisms on the crossed-product using our braiding.
  We have adapted the construction of $\A_L$ constructed by Dong--Xu in \cite{DoXu2006} and also in \cite{Bi2011}.

Let $L$ be a positive even lattice and $F:=L\otimes_{\ZZ}
\RR$ its associated Euclidean space.

We construct a ``braided'' outer $F$-action on a hyperfinite type $\mathrm{III}_1$
factor $\A_{F}$ such that for any even lattice $L\subseteq F$ a
certain twisted crossed product by $L$ gives ``local'' extension
$\A_{L}\supseteq \A_{F}$. We show that $\alpha$-induction gives $T_{L}:=F/L$--kernel,
\ie a monomorphism from the torus $T_{L}$ to the group $\Out(\A_{L})$.

In the context of conformal nets, $G$--kernels of finite groups have been studied
and an analogue of Jones' result, namely that any obstructions for any finite group 
arises, was established by Evans and Gannon \cite{EvGa2022}.
In this context, given a \emph{holomorphic} conformal net $\A$, \ie a conformal net 
that does not have any non-trivial local endomorphisms, and a finite group 
$G\subset \Aut(\A)$ the $G$--twisted automorphisms form a $G$--kernel
\cite[4.8 Remark 2]{Mg2005}.
These automorphisms can be constructed using $\alpha$-induction on the fixed point net 
$\A^G$ as shown by M"uger \cite[3.8 Theorem]{Mg2005}.
Furthermore, the representation category of $\A^G$ is the twisted 
unitary Drinfel'd double of $G$ where the twist is exactly the anomaly of the $G$--kernel 
\cite[Theorem 5.14]{Mg2005}.

We consider a similar situation.
Let $L$ be an even positive lattice of rank $n$ inside an Euclidean space $F$.
There is a conformal net $\A_L$ associated with $L$.
Dong and Xu showed \cite{DoXu2006} that the representation category of $\A_L$ has fusion rules 
given by the finite discriminant group $L^\ast/L$, where $L^\ast$ is the dual lattice.
Thus we have a $L^\ast/L$--kernel of \emph{local automorphisms}. 
One can costruct $\A_L$ as the twisted crossed product of the conformal net $\A_F$
associated with the space $F$ by the group $L$. 
This implies that there is a natural action of the $n$-torus $F/L^\ast$ on $\A_L$
such that $\A_F$ is the fixed point under this action.

We construct an $F/L$--kernel from $\alpha$-induction which is an extension
of the symmetry group $F/L^\ast$ by the local automorphisms $L^\ast/L$ group

$$
    {0} \rightarrow L^\ast\to L \rightarrow F/L \rightarrow F/L^{\ast} \rightarrow \{0\}
$$
and explicitly compute its anomaly and give a closed formula.
We show that it corresponds to an element in $H^4(B(F/L), \ZZ)$ 
coming from the pairing of its Gram matrix with the cup product of the standard
generators of $H^2(B(F/L), \ZZ)$.
In particular, this implies our main theorem (Corollary \ref{cor:MainResult}):
\begin{prop}
\label{finalrslt}
    Let $n\in\NN$ and $T_n = \RR^n/\ZZ^n$. 
    Then all possible obstructions for $T_n$--kernels are realized 
    on the hyperfinite type III${}_1$ factor.
\end{prop}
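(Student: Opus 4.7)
The plan is to invoke Corollary \ref{cor:MainResult}, which for every positive even lattice $L\subseteq \RR^n$ of rank $n$ produces a $T_L$--kernel on the hyperfinite type III${}_1$ factor $\A_L$ whose anomaly equals the Gram form in $H^4(BT_L,\ZZ)\cong \operatorname{Sym}^2(L,\ZZ)$. Under the identifications $H^3_{\Borel}(T_n,\TT)\cong H^4(BT_n,\ZZ)\cong \operatorname{Sym}^2(\ZZ^n,\ZZ)$, obstructions for $T_n$--kernels correspond to symmetric integer bilinear forms $B$ on $\ZZ^n$, so the task reduces to realizing every such $B$ as the anomaly of some $T_n$--kernel on a hyperfinite type III${}_1$ factor.

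For $B$ positive definite and even, the realization is immediate: choose a positive even lattice $L\subseteq \RR^n$ of rank $n$ together with a $\ZZ$-basis whose Gram matrix equals $B$, apply Corollary \ref{cor:MainResult} to produce a $T_L$--kernel on $\A_L$ with anomaly $B$, and transport this kernel along the basis-induced Lie group isomorphism $T_L\cong T_n$. Since the hyperfinite type III${}_1$ factor is unique up to isomorphism, the resulting kernel can be moved onto any chosen model $R$, producing a $T_n$--kernel on $R$ whose anomaly corresponds to $B$ under the identifications above.

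The main obstacle is to cover the remaining classes in $\operatorname{Sym}^2(\ZZ^n,\ZZ)$, since the direct construction only realizes anomalies lying in the positive-definite cone with even diagonal. The plan is to decompose a general $B$ as a difference $B=B_+-B_-$ with $B_\pm$ both positive definite and even (always possible by adding a sufficiently large positive definite even form to both sides), to realize $B_+$ and $B_-$ individually by the direct construction above, and then to combine the two kernels into a single $T_n$--kernel whose anomaly is $B_+-B_-=B$ --- for instance, by tensoring the kernel for $B_+$ with the opposite (orientation-reversed) kernel for $B_-$, whose anomaly is $-B_-$, and restricting the resulting $T_n\times T_n$--action to the diagonal copy of $T_n$. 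Verifying that such a combined construction genuinely defines a $T_n$--kernel on a single hyperfinite type III${}_1$ factor and that anomalies are additive under this pairing is the central technical step.
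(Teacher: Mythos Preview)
Your approach is essentially the paper's, modulo a labeling slip and one point worth tightening.

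First, the result you cite as Corollary~\ref{cor:MainResult} \emph{is} the statement you are asked to prove: Proposition~\ref{finalrslt} is just the Introduction's announcement of Corollary~\ref{cor:MainResult}, so invoking it is circular on its face. The result you actually describe and need---that for each positive even lattice $L$ the $\alpha$-induced $T_L$--kernel on $\A_L$ has anomaly given by the Gram form---is Proposition~\ref{rootlaticecoho} (together with Proposition~\ref{Prop:Kernel} for injectivity into $\Out$). With that reference corrected your outline is sound.

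Your reduction of a general class to a difference $B_+-B_-$ of positive-definite even forms is a mild variant of the paper's argument, which instead exhibits explicit generators (diagonal even lattices and permuted copies of $A_2\oplus A_1^{\,n-2}$) for the abelian group of even symmetric integer matrices and then invokes the additivity $\omega_{G_1}\omega_{G_2}=\omega_{G_1+G_2}$ from Equation~\eqref{cohompd}. Both routes ultimately rest on combining kernels---tensoring and restricting to the diagonal---to realize sums of anomalies on a single hyperfinite III$_1$ factor, and the paper leaves that step just as implicit as you do. For the sign change, the paper does not pass to an ``opposite'' algebra; it simply observes that $\alpha^{+}$-- versus $\alpha^{-}$--induction on the same lattice net yield obstructions $\omega_{\mp}$ differing by a global sign (Theorem~\ref{thm:CocycleOneDimensional} and Subsection~\ref{ssec:ComputationOfobstructionNDimensional}), which is the cleanest way to make your ``orientation-reversed'' kernel precise.
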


\subsection*{Organization of the paper}
\label{ssec:Organization of the paper} In Section 2, we briefly discuss about the
basics of a $G$--kernel. Subsequently, we construct $\U(1)$-kernel and
$\U(1)^{n}$--kernel by using the the braidings in Proposition~
\ref{prop:BraidingFormula}. In section 4, we compute the anomalies associated
to $\U(1)$--kernel and $\U(1)^{n}$--kernel using conformal field theory. In Theorem~
\ref{thm:CocycleOneDimensional} , we compute Borel cohomology groups
$H^{3}_{\text{Borel}}(\U(1), \U(1))$. 
We compute $H^{3}_{\text{Borel}}(\U(1)^{n}
, \U(1))$ in the subsection \ref{ssec:ComputationOfobstructionNDimensional} . 
In Section 6, we talk about twisted sectors and show that the $F/L$--kernel
is $F/L^{\ast}$--twisted for any even lattice $L$ in an $n$-dimensional
euclidean space $F$. In Section 7, we discuss about extension of endomorphisms
in appendix.
\subsection*{Acknowledgements}
\label{ssec:Acknowledgements} M.\ B.\ would like to thank Alexei Davydov, Luca
Giorgetti, and Corey Jones for their discussions.
The second author would like to thank Vladimir Uspenski for discussions on $H^{1}_{\Borel}(\U(1), \U(1))$.

\section{Preliminaries}
\label{sec:Preliminaries} 

\subsection{\texorpdfstring{$G$--kernel}{G--kernel}}
\label{ssec:GKernel} 
In this section we recall some basics of the notion of a $G$-kernel on a von Neumann
algera $M$. We refer interested readers to see \cite{Su1980} for more details. 
Our main focus is on type $\mathrm{III}$ factors. Recall that a factor is a
von Neumann algebra $M$ with trivial center $Z(M) = M' \cap M = \CC\cdot 1$.

Throughout this paper, we denote by $G$ a \emph{locally compact group} which
is assumed to be \emph{second countable}, \emph{Hausdorff} and by $M$ a \emph{von
Neumann algebra} acting on a \emph{separable Hilbert space} $\Hil$.

We denote by $\Aut(M)$ the group of $\ast$-preserving unital and normal automorphisms
of $M$. We denote by $\U(M)$ the unitary group of the von Neumann algebra $M$.
Furthermore, we denote by $\Out(M)$ the group of unitary equivalence classes
of automorphisms of $M$ whose elements are of the form
$[\alpha]=\{\Ad v \circ \alpha\colon v \in \U(M) \}$ for some
$\alpha \in \Aut(M)$. We denote by $M_{\ast}$ the predual of $M$, which we
assume to be separable. The topology on $\Aut(M)$ is defined by pointwise norm
convergence in $M_{\ast}$. A sequence $\{\alpha_{n}\}_{n \in \NN}$ in
$\Aut(M)$ is said to converge the element $\alpha$ in $\Aut(M)$ in $u$-toplogy
if and only if for each $x$ in $M$ and for all $\phi$ in $M_{\ast}$, we have
\begin{align*}
  \lim_{n \to \infty}\phi(\alpha_{n}(x))= \phi(\alpha(x))\,.
\end{align*}
The automorphism group $\Aut(M)$ is a Polish topological group under this ambient
$u$-topology. In other words, we call a map $\alpha \colon G \rightarrow \Aut(M
)$ is Borel measurable if for every open set $V$ in $\Aut(M)$ the preimage $\alpha
^{-1}(V)$ is a Borel set.
Recall the definition of a $G$--kernel, see
\cite[Definition~3.1.1]{Su1980} and references therein.
\begin{defi}
  Given a group $G$ and a von Neumann algebra $M$, a monomorphism
  $\theta\colon G \rightarrow \Out(M)$ is called a $G$--kernel if there exists
  a Borel lift $\alpha\colon G \rightarrow \Aut(M)$ such that $\epsilon \circ \alpha
  =\theta$, where $\epsilon\colon \Aut(M) \rightarrow \Out(M)$ is given by
  $\epsilon(\alpha) = [\alpha]$.
\end{defi}

Before going to the next section, we record some basic facts about the Borel cohomology
on a von Neumann algebra and its obstruction theory, both of which are
essential for our purpose. A systematic study of Borel cohomology on von Neumann
algebras has been carried out by Sutherland, Connes, see (\eg
\cite{Co1977, Su1980}). Denote by $\{M, \theta\}$ the $G$--kernel  associated to a von Neumann algebra $M$.
\subsection{ Borel cohomology}
\label{ssec:Borel cohomology}
\begin{defi}
  \label{cocyle}

  Let $A$ be an additive group and $G$ a multiplicative group. Let $\beta \colon
  G \rightarrow \Aut(A)$ be a given Borel map such that $\beta_{g}$ is a Borel
  homeomorphism on $A$. Then we denote $C^{n}(G, A)$ as the set of Borel maps
  from $G^{n}$ to $A$, namely known as Borel $n$-cochains. 
  The boundary map
  $\partial_{n} \colon C^{n}(G, A) \rightarrow C^{n+1}(G, A)$ (for all $n \in \NN$) is defined by
  \begin{align*}
    \partial_{n}\omega(g_{1},\cdots,g_{n+1}) & =\beta_{g_1}(\omega(g_{2},\cdots,g_{n+1}))+                                                                         \\
                                             & \sum_{j=1}^{n}(-1)^{j}\omega(g_{1},\cdots, g_{j} g_{j+1}, \cdots , g_{n+1})+ (-1)^{n+1}\omega(g_{1}, \cdots, g_{n})
  \end{align*}
  for all $\omega \in C^{n}(G, A)$.
\end{defi}
It is clear that $\partial_{n+1}\circ \partial_{n}=0$ for all
$n \in \mathbb{N}$. When $M$ is a factor, \ie, when $M$ has a trivial center,
observe that $\U(M)=\U(1)$, where $\U(1)$ denotes the unit circle group. In
this article, we will work with the case $A=\U(1)$. Note that
$\partial_{n}^{2}=0$ for all $n$. Denote the $n$-cocyles by $Z_{\beta}^{n}(G, A
):=\ker(\partial_{n})$ and $n$-coboundaries by $B_{\beta}^{n}(G, A):=\text{Image}
(\partial_{n-1})$ for all $n \in \NN$.

\begin{defi}
  \label{Cohmgroup} The $n$-th cohomology group of the given $\beta$, denoted
  by $H_{\beta}^{n}(G, A)$, is defined as
  \begin{align*}
    H^{n}_{\beta}(G, A)=Z_{\beta}^{n}(G, A)\slash B_{\beta}^{n}(G, A)\,.
  \end{align*}
\end{defi}
In what follows, we recall the definition of the obstruction associated with
the given $G$--kernel.
\begin{defi}
  Let $\{M, \theta\}$ be a given $G$--kernel and $\alpha\colon G\to\Aut(M)$ a
  Borel lift of $\theta$. By definition, we have
  $\theta_{g} \theta_{h}=\theta_{gh}$ for all $g,h\in G$. Therefore there exist
  unitaries $u(g, h) \in \U(M)$ for any $g, h$ in $G$, such that
  $\alpha\colon G \rightarrow \Aut(M)$ satisfies
  $\alpha_{g} \alpha_{h}=u(g, h) \alpha_{gh}$ for all $g, h \in G$. Comparing the
  equations $(\alpha_{g} \alpha_{h}) \alpha_{k}$ and $\alpha_{g} (\alpha_{h} \alpha
  _{k})$, we have
  \begin{align}
    \label{obt1}\Ad (\alpha_{g}(u(h, k))u(g,hk)) & =\Ad (u(g,h)u(gh,k))\,.
  \end{align}
\end{defi}

Here $\Ad u$ means conjugation by the unitary $u$. Therefore by Equation~\eqref{obt1},
we get
\begin{align}
  \label{obt2}\alpha_{g}(u(h,k))u(g,hk) & =\omega(g, h, k)u(g,h)u(gh,k)\,.
\end{align}
Thus the class $[\omega]$ of $\omega\colon G^{3}\rightarrow \CC$ is called the
\emph{obstruction} associated with the $G$--kernel $\{M, \theta\}$.

Connes and Jones showed the existence of a $G$-kernel with prescribed
obstruction in $\mathrm{II}_{1}$ factor setting. In the same vain, the key
question we address is the following: given a cohomological obstruction, can
one construct examples of $G$--kernel for some locally compact group $G$ in hyperfinite
type $\mathrm{III}_{1}$ factor setting such that the \emph{obstruction} is
realized? The question is more interesting when $G=\U(1)$, $G=\U(1)^{n}$. The answer
is affirmative and triggered by the Conformal field theory.
\subsection{Braiding on tensor subcategory of endomorphisms} 
\label{ssec:Braiding of tensor subcategory of endomorphisms}
Denote by $\End(M)$ the set of unital normal endomorphisms of a type $\mathrm{III}$ factor $M$.
The tensor structure $\otimes$ on endomorphisms (objects) of $\rho$ and $\sigma$ is given by composition, \ie $\rho \otimes \sigma:=\rho \circ \sigma$ and morphisms on objects are given by intertwiners, \ie $\Hom(\rho\sigma, \sigma \rho):=(\rho \sigma, \sigma \rho)=\{ t \in M: t \rho(x)=\sigma(x)t \text{ for all } x \in M\}$.
The tensor structure on the morphisms is given by
\begin{align*}
r \otimes s=r \rho(s)=\tilde{\rho}(s)r, r \in (\rho, \tilde{\rho}), s \in (\sigma, \tilde{\sigma})\,.
\end{align*}
An endomorphism $\rho$ is called irreducible if $\rho(M)' \cap M=\CC 1_{M}$.
A tensor category $\cC$ is called rigid if every element has dual object. More precisely, for each $c \in \cC$ there exists $\hat{c}$ satisfying the following:
\begin{align}
&(\ide_c \otimes \ev_c) \circ (\coev_c \otimes \ide_c)= \ide_c\\
&(ev_c \otimes \ide_{\hat{c}}) \circ (\ide_{\hat{c}} \otimes \coev_c)=\ide_{\hat{c}}\,,
\end{align}
where evaluation map and coevaluation map are given by 
$\ev_c \colon \hat{c} \to 1_{\cC}$ and $\coev_c \colon 1_{\cC} \to c \otimes \hat{c}$, respectively,
and for each element $c \in \cC$ has predual $c_{\ast}$ such that $\hat{c_{\ast}} \cong c$.
A unitary braiding on a rigid $C^{\ast}$-tensor category (closed under finite direct sums and each object is finite sum of irreducible objects) $\cC \subset \End(M)$ is family of unitaries 
in $\{\varepsilon_{\rho, \sigma} \in (\rho\sigma, \sigma \rho): \rho, \sigma \in \cC\}$ satisfying the following:
\begin{itemize}
\item naturality: $\varepsilon_{\rho', \sigma'}s\rho(t) = t\sigma(s)\varepsilon_{\rho, \sigma}$ for all $s \in (\rho, \rho'), t \in (\sigma, \sigma')$,
\item hexagonal diagram equations:
\begin{align*}
&\varepsilon_{\rho_1, \rho_2\rho_3}=(1_{\rho_2} \otimes \varepsilon_{\rho_1, \rho_3}) \cdot (\varepsilon_{\rho_1, \rho_2} \otimes 1_{\rho_3})=\rho_2(\varepsilon_{\rho_1, \rho_3}) \cdot \varepsilon(\rho_1,\rho_2)\,.\\
& \varepsilon_{\rho_1\rho_2, \rho_3}=(\varepsilon_{\rho_1\rho_3} \otimes 1_{\rho_2}) \cdot (1_{\rho_1} \otimes \varepsilon _{\rho_2, \rho_3})=\varepsilon_{\rho_1, \rho_3} \cdot \rho_1(\varepsilon_{\rho_2, \rho_3}) \text{ for all } \rho_1, \rho_2, \rho_3 \in \cC\,.
\end{align*}
\end{itemize}
\subsection{Conformal nets}
\label{ssec:conformalnets}
Let $\cI$ be the set of non-empty, non-dense, open intervals (also known as proper) of the unit circle $S^1$ and $\PSL_2(\RR)=\SL_2(\RR)/\{-1, 1\}$ with $\PSL_2(\RR)$ acts on $S^1$ by Möbius transformations.
Denote as $I'=S^1 \setminus \bar{I}$.
We denote by $\mathcal{B}(\Hil)$ the set of bounded operators on a Hilbert space $\Hil$ and $\U(\Hil)$ the unitary group.

\begin{defi}\label{mnett}\cite[Definition~2.1]{BiDeGi2023}.
A Möbius covariant net on $S^1$ is a triple $(\A, U, \Omega)$ consisting of a family of von Neumann algebras $\{\A(I): I \in \cI\}$ acting on a complex separable Hilbert space $\Hil$, a strongly continuous unitary representation $U \colon \PSL_2(\RR) \rightarrow \U(\Hil)$, a unit vector $\Omega \in \Hil$ such that 
\begin{itemize}
\item \textbf{Isotony}: $\A(I_1) \subset \A(I_2)$ if $I_1 \subset I_2$ with $I_1, I_2 \in \cI$.
\item \textbf{Locality}: $\A(I_1) \subset \A(I_2)'$ if $I_1$ and $I_2$ are disjoint intervals in $\cI$. 
Here $\A(I)'$ denotes the commutant of $\A(I)$ in $\mathcal{B}(\Hil)$ for any $I \in \cI$.
\item \textbf{Möbius covariance}: For each $I \in \cI$ and $g \in \PSL_2(\RR)$, we have 
\begin{align*}
U(g)\A(I)U(g)^{\ast}=\A(gI) \,.
\end{align*}
\item \textbf{Positivity of energy}: $U$ has positive energy, \ie conformal Hamiltonian generator of one-parameter rotation subgroup of $\PSL_2(\RR)$ has non-negative spectrum.
\item \textbf{Vacuum vector}: $\Omega$ is the unique vector upto a phase such that $U(g)\Omega=\Omega$ for all $g \in \PSL_2(\RR)$ and 
\begin{align*}
\overline{\bigvee_{I \in \cI} \A(I)\Omega}=\Hil \,,
\end{align*}
where $\bigvee_{I \in \cI} \A(I)$ is the von Neumann algebra generated by $\A(I)$ for all $I \in \cI$.
\end{itemize} 
\end{defi}
Let $\Diff_{+}(S^1)$ be the group of orientation preserving diffeomorphims of $S^1$.
\begin{defi}\label{cft}\cite[Defintion~2.2]{BiDeGi2023}
A Möbius covariant net $\A$ is called a conformal net if it satisfies another extra condition:
\begin{itemize}
\item The representation $U \colon \PSL_2(\RR) \rightarrow \U(\Hil)$ extends to a projective positive energy representation of $\Diff_{+}(S^1)$ (again denoted by $U$) subject to the conditions:
\begin{align*}
& U(g)\A(I) U(g)^{-1}= \A(gI) \qquad g \in \Diff_{+}(S^1),~ I \in \cI\\
& U(g)xU(g)^{-1}=x \qquad x \in \A(I),~ g \in \Diff_{+}(I')\,,
\end{align*}
where 
\begin{align*}
\Diff_{+}(I')=\{ g \in \Diff_{+}(S^1): g(z)=z \text{ for all } z \in I\}\,.
\end{align*}
\end{itemize}
\end{defi}

Any conformal net $\A$ satisfies the property $\A(I)'=\A(I')$ known as \emph{Haag Duality} (see \cite{BrGuLo1993}).
Furthermore, any conformal net $\A$ fulfills following properties:
\begin{itemize}
\item \emph{Reeh–Schlieder theorem}: The vaccum vector $\Omega$ is cyclic and separating for $\A(I)$ for all $I \in \cI$, \ie
$\overline{\A(I)\Omega}=\Hil$ and $\overline{\A(I)'\Omega}=\Hil$. (See for e.g \cite[Corollary~2.8]{GaFr1993})
\item \emph{Factoriality}: For all $I \in \cI$, $\A(I)$ is a type $\mathrm{III}_1$ factor by \emph{Connes classification theory}, namely \cite{Co1973}.

\end{itemize}
\subsection{\emph{DHR} endomorphisms and \emph{DHR} braiding}
\label{ssec:DHR}
We introduce some terminology around \emph{DHR endomorphisms} of an irreducible net of algebras $\A$.
A representation of a net $\A$ is a pair $(\pi, \Hil_\pi)$ where $\Hil_\pi$ is a 
Hilbert space and $\pi$ is a family of representations:
\begin{align*}
    \pi = \{ \pi_I \colon \A(I) \to \B(\Hil_{\pi}) \}_{I \in \cI}
 \end{align*}
 such that $\pi_{\tilde{I}}|\A(I)=\pi_{I}, I \subset \tilde{I}$.
A representation $\rho$ is called localized on an interval $I_0$ if $\Hil_{\pi}=\Hil$ and $\rho_{I'}=\id_{\A(I')}$.
Let $\rho$ be a localized representation on $I_0$ then Haag duality ($\A(I)'=\A(I')$) implies $\rho_I \in \End(\A(I))$ for any proper interval $I \supset I_0$. 
In this case, $\rho$ is called a \emph{DHR endomorphism} localized in $I_0$.

Let $\Rep^{I}(\A) \subset \End(\A(I_0))$ be the category of \emph{DHR endomorphisms} on $I_0$.
This is a full and replete subcategory of $\End(\A(I_0))$ and has an obvious tensor structure (tensor category).
By abuse of notation, we write $\rho:=\rho_{I_0}$ and we have a family of \emph{DHR endomorphisms} given by $\{\rho_I\}_I$.

Let $\rho, \sigma \in \Rep^{I}(\A)$ and $I_1, I_2$ be disjoint intervals inside $I$ such that $I_1 < I_2$, in other words $I_1$ is clockwise from $I_2$ inside $I$.
Then choose $\hat{\rho} \in [\rho]$ localized in $I_1$ and $\hat{\sigma} \in [\sigma]$ localized in $I_2$. 
Also choose charge transporters $u_{\rho} \in (\rho, \hat{\rho})$ and $u_{\sigma} \in (\sigma, \hat{\sigma})$.

Define the 
the braiding by
\begin{align}
\label{DHRb}
\varepsilon(\rho, \sigma):=\sigma(u_{\rho}^{\ast})u_{\sigma}^{\ast}u_{\rho}\rho(u_{\sigma})\,,
\end{align}
which is independent of the choices of  $\hat{\rho}, \hat{\sigma}$ as well as $u_\rho, u_\sigma$.
From the property $\hat{\rho}\circ \hat{\sigma}=\hat{\sigma} \circ \hat{\rho}$, we have $\varepsilon(\rho, \sigma) \in (\rho \sigma, \sigma \rho)$.

Moreover, if we choose opposite order of intervals $I_2 < I_1$, then we have the opposite braiding associated to $\varepsilon(\rho, \sigma)$ is given by
\begin{align*}
\varepsilon_{\rho, \sigma}^{-}:=\varepsilon_{\rho, \sigma}^{\ast}
\end{align*}
and we write $\varepsilon_{\rho, \sigma}^{+}:=\varepsilon_{\rho, \sigma}$.
Therefore, the category $\Rep^{I}(\A)$ admits \emph{DHR braiding} defined in Equation~\eqref{DHRb}.
\subsection{$\ZZ/{2\ZZ}$--kernel }
\label{ssec:exmpleoforder2}
Let $G=\langle \chi \rangle \cong \ZZ/{2\ZZ}$ for $\chi \in \Out(M)$ with $\chi \circ \chi=[\ide_M]$.
Let $\alpha \in \Out(M)$ such that $[\alpha]=\chi$.
Then by definition there exists an $u \in \U(M)$ such that $\alpha^2=\Ad(u)$. 
Note that for all $x \in M$, we have
\begin{align}\label{auteq}
   \nonumber &\alpha^2(x)=uxu^{\ast}\\
    &\alpha(\alpha(x))=uxu^{\ast}\,.
    \end{align}
    Applying $\alpha^{-1}$ on the equation \eqref{auteq} gives
    \begin{align*}
    &\alpha(x)=\alpha^{-1}(u) \alpha^{-1}(x) \alpha^{-1}(u)^{\ast}\,.
    \end{align*}
    Furrthermore, evaluating $\alpha^{-1}(x)$ on the equation \eqref{auteq} also gives
    \begin{align*}
    &\alpha(x)=u \alpha^{-1}(x) u^{\ast}\,.
    \end{align*}
    Equating both sides, we have
    \begin{align*}
    &\alpha^{-1}(u) \alpha^{-1}(x) \alpha^{-1}(u)^{\ast}= u \alpha^{-1}(x) u^{\ast}\,.
    \end{align*}
    As $\alpha$ is a homomorphism and by applying $\alpha$ on both sides of the above equation, we obtain
    \begin{align*}
     u x u^{\ast}=\alpha(u)x \alpha(u)^{\ast}\,.
    \end{align*}
    Hence, we get
    \begin{align*}
     \alpha(u)^{\ast}u x u^{\ast} \alpha(u)=x\,.
\end{align*}
Therefore, $u^{\ast}\alpha(u) \in Z(M)=\CC 1_{M}$ and consequently $\alpha(u)=\nu u$ for some $\nu \in \CC$.
Since $\alpha^{2}(u)=u$, we have $\nu^{2}=1$.
As $\nu=\pm 1$, it corresponds to \emph{Frobenius-Schur} indicator which appears in representation theory.
The case $\nu=-1$ is called twisted, while $\nu=1$ is called untwisted.
In particular, $\nu \in H^{3}_{\Borel}(\ZZ/2\ZZ, \U(1)) \cong \ZZ/2\ZZ$ is the obstruction for $\chi$.

\subsection{Bockstein Homomorphisms}
\label{ssec:bock}
Let $X$ be a topological space.
Given a short exact sequence of abelian groups
\begin{align*}
0 \longrightarrow G \longrightarrow H \longrightarrow K \longrightarrow 0\,,
\end{align*}
we have a exact sequence of chain complexes for all $n \in \NN$ given by 
\begin{align*}
0 \longrightarrow C^{n}(X, G) \longrightarrow C^{n}(X,H) \longrightarrow C^{n}(X, K) \longrightarrow 0\,.
\end{align*}
Then for each $n \in \NN$ there is a long exact sequence of  cohomology groups given as
\begin{align*}
\cdots \longrightarrow H^{n}(X, G) \longrightarrow H^{n}(X, H) \longrightarrow H^{n}(X, K) \longrightarrow H^{n+1}(X, G) \longrightarrow \cdots 
\end{align*}
The \emph{boundary map} (also known as \emph{connector map}) $\delta_n \colon H^{n}(X, K) \rightarrow  H^{n+1}(X, G)$ is called a \emph{Bockstein Homomorphism} for every $n \in \NN$.
\subsection{Local nets of standard subspaces}
\label{ssec:Local nets of standard subspaces}
Let $F$ be a finite-dimensional Euclidean space of dimension $n$ with an inner product
$\langle \slot, \slot \rangle_F$.
We denote the set of all smooth functions from the circle $\SOne$ to $F$ by
\begin{equation}
  L(F):=C^{\infty}(S^1, F)\,.
\end{equation}
A function $f$ $\in$ $L(F)$ can be represented by its \emph{Fourier series} with
\emph{Fourier coefficients} $\{\hat{f}_k\}_{k\in\ZZ}$ in the the complexification
$F_\CC := F \otimes_{\RR}\CC$ of $F$ as follows
\begin{align}
  f(\theta) & = \sum_{k \in \ZZ}\hat{f_k}e^{\ima k \theta}\,,
    & \hat{f_k} =  \overline{\hat f_{-k}}
    & :=\int_{0}^{2 \pi}f(\theta) e^{-\ima k \theta}\frac{d\theta}{ 2 \pi}\in F_\CC\,.
\end{align}
We introduce a seminorm $\|\slot\|$ on $L(F)$ via
\begin{equation}
  \|f\|^{2} := \sum_{k=1}^{\infty}k \|\hat{f_{k}}\|^{2}\,.
\end{equation}
The linear endomorphism $\mathfrak{J}$ on $L(F)$ given by
$\mathfrak{J}(\hat{f_k})=-\ima\sign(k) \hat{f_k}$ 
defines a complex structure on $L(F)$, namely with $\mathfrak{J}^{2}=-1$. 
Note also that
\begin{align*}
  \langle f, \mathcal{J}g\rangle 
    &= \ima\langle f, g\rangle 
    = -\langle \mathcal{J}f, g\rangle\,,
\end{align*}
and that $\|f\| = 0$ if and only if $f \in F$ is a constant function.
This gives the quotient space $L(F)/F$ the structure
of a pre-Hilbert space and we  denote its completion by $\Hil_{0, F}$.

It is straight-forward to check that the canonical symplectic form 
$\omega(\xi, \eta) = \Ima \langle\xi, \eta\rangle$
associated with the Hilbert space $\Hil_{0, F}$ is explicitly given by
\begin{equation}
  \omega([f], [g])
    = \frac{1}{2}\int_{0}^{2 \pi}
    \langle f(\theta), g'(\theta) \rangle_{F} \frac{d\theta}{2 \pi}
    =: \frac{1}{2}\int \langle f, g' \rangle_F\,.
\end{equation}
Define
\begin{align*}
  L_{I}(F)=\{f \in L(F): \supp(f) \subset I\}\,.
\end{align*}
For any open connected proper interval $I$ of $S^1$, the nets of Hilbert spaces
are given by
\begin{align*}
  H_{F}(I)=\overline{\{f: f \in L(F), \supp(f) \subset I\}/F}\subset \Hil_{0, F}\,,
\end{align*}
and the corresponding conformal nets $\A_{F}(I)$ are given by
\begin{align*}
  \A_{F}(I)=\{ W([f]): f \in L_{I}(F)\}'' \subset \B(\e^{\Hil_{0,F}})\,.
\end{align*}

\subsection{Second quantization}
\label{ssec:SecondQuantization}
Given a Hilbert space $\Hil$ and consider the  associated sesquilinear form $\omega$ 
defined by $\omega([x],[y])=\Ima \langle [x],[y]\rangle_\Hil$ for all $x, y$ in the pre-Hilbert space of $\Hil$. 
We get a representation of the Weyl algebra on the Bosonic Fock space $e^{\Hil}$ 
given by
\begin{align}
    W(\xi)e^{0} &= e^{-\frac{1}{2}\|\xi\|^{2}}e^{\xi}\, & \xi\in \Hil\,.
\end{align}
 Here the Bosonic Fock space $e^{\Hil}$ is given by
\begin{align*}
  e^{\Hil}:=\bigoplus_{n=0}^{\infty}P_{n} \Hil^{\otimes n}\,,
\end{align*}
where  $P = \bigoplus_{n} P_{n}$ is the projection onto the symmetric subspace
defined by
\begin{align}
  \label{eq:Projection}
  P_{0}\Omega & = \Omega\,, &
  P_{n}(x_{1} \otimes \cdots \otimes x_{n}) 
    &= \frac{1}{n!}\sum_{\sigma \in S_n}x_{\sigma(1)}\otimes \cdots \otimes x_{\sigma(n)}\,.
\end{align}
Note that the action is well defined since the set of all \emph{coherent vectors}
\begin{align}
  \label{eq:CoherentVectors}
  e^\xi &:=\bigoplus_{n=0}^{\infty}\frac{1}{\sqrt{n!}}\xi^{\otimes n}\,,
    &&(\xi \in \Hil)
\end{align}
is total in $e^{\Hil}$.
Furthermore, it can be checked that $\{\W(\xi)\}_{\xi\in\Hil}$ indeed satisfy the 
\emph{Weyl relations}
\begin{align}
  \label{eq:Comuwl}
  W(\xi)W(\eta) & =e^{-i \omega(\xi, \eta)}W(\xi + \eta)  
  =e^{-2i\omega(\xi, \eta)}W(\xi) W(\eta)\,,& \text{for all }\xi, \eta \in \Hil\,.
\end{align}
Finally, we call $\Omega:=e^{0}$ the \emph{vacuum vector} of this representation
which defines a state with $\varphi(W(\xi)) =  e^{-\frac{1}{2}\|\xi\|^{2}}$.

\section{Construction of obstruction}
\label{sec:Construction of obstruction}
\subsection{Local nets of standard subspaces}
\label{ssec:LocalNetsOfStandardSubspaces}
We briefly review some background on local nets of standard subspaces. Let
$I\subset S^{1}$ be a \emph{proper interval}, \ie open, connected, non-empty
and non-dense subset of the unit circle $S^{1}$. We denote by $I'=S^{1}\setminus
\overline{I}$ the \emph{complement interval} which is again proper.
Consider the set of all real-valued smooth functions on
$S^{1}$ denoted by $LR:=C^{\infty}(S^{1}, \RR)$. Note that any element $f$ in $C
^{\infty}(S^{1}, \RR)$ can be written as a \emph{Fourier series} exactly the same as in the general $n$-dimensional case.

We equip $C^{\infty}(S^{1}, \RR)$ exactly same seminorm
and complex structure via the operator $\mathcal{J}$ coming from the construction of $H_F$ with the special case $F=\RR$. 

 Take the quotient space $C^{\infty}(S^{1}, \RR)\slash
\RR$ by identifying constant functions on $S^{1}$ with $\RR$. On this ambient
quotient space, the seminorm induces a norm. The inner product on
$C^{\infty}(S^{1}, \RR)\slash \RR$ is given by
\begin{align*}
  \langle f, g \rangle = \sum_{k=1}^{\infty}k \hat{f_k}\hat{g_{-k}}\,,
\end{align*}
where we write $f, g$ in $C^{\infty}(S^{1}, \RR)\slash \RR$ by abuse of
notation. By completing the quotient space $C^{\infty}(S^{1}, \RR)\slash \RR$ with
respect to the norm, we get as a byproduct
$\Hil=\overline{C^{\infty}(S^1, \RR)\slash \RR}^{\|\cdot\|}$ with 
the complex structure is given by $\mathcal{J}$ and similar as the construction of $H_F$.

It comes with a natural symplectic form given by
\begin{align}
  \label{eq:sesquilinear_form}\omega([f], [g]) = \Im \langle f, g\rangle = -\frac{i}{2}\sum_{k \in \ZZ}k\hat{f_k}\hat{g_{-k}}= \frac{1}{2}\int_{0}^{2 \pi}f(\theta) g'(\theta) \frac{d\theta}{2 \pi}\,,
\end{align}
for all $f, g \in L_{I}(\RR)$.
For a given proper interval $I$ of $S^1$, we define
\begin{align}
  \label{prelp nets}\L_{I}\RR=\{ f \in L\RR: \supp(f) \subset I\}\,.
\end{align}
We introduce a seminorm $\|\slot\|$ on $L(\RR)$ via
\begin{equation}\label{norm}
  \|f\|^{2} := \sum_{k=1}^{\infty}k \|\hat{f_{k}}\|^{2}\,, \qquad f \in L(\RR)\,.
\end{equation}
Define the net of Hilbert spaces to be
\begin{align*}
  H_{\RR}(I)=\overline{L_{I}\RR \slash \RR}^{\|\cdot\|}\subset \Hil\,,
\end{align*}
where the norm is given in Equation~\eqref{norm}. Observe that $\{H_{\RR}(I)\}_{I\in\cI}$
is a local (and M\"obius covariant) net of standard real Hilbert spaces by \cite[Section~3.1]{Bi2011}.

Define the associated $\U(1)$-current net
\begin{align}
  \label{CCR}\A_{\RR}(I)=\{W([f]): f \in L_{I}\RR\}'' \subset \mathcal{B}(e^{\Hil})\,,
\end{align}
for any proper interval $I$ of $S^{1}$. Here, notationally for a set $S \in \mathcal{B}
(\H)$, $S''$ denotes the von Neumann algebra generated by $S$. The association
$I \mapsto \A(I)$ defines a local conformal net $\A$, see \eg
\cite[Proposition~3.5]{Bi2011} for details. By \cite{DR1975} and \cite{Lo1979}
$\A(I)$ is a hyperfinite type III${}_{1}$ factor for any $I\in \cI$.

\subsection{Gauge automorphisms of the $\U(1)$-current}
\label{ssec:GaugeAuto} 
We identify smooth functions on $S^1$ as $2 \pi$ periodic functions on $\RR$, \ie any $f \in C^{\infty}(S^1, \RR)$ satisfies $f(\theta+2\pi)=f(\theta)$ for all $\theta \in \RR$ via the identification map $\RR \ni \theta \mapsto e^{\ima \theta} \in \SOne$.
By an abuse of notation, we will write $\int _{S^1} f=\int_{\theta=0}^{2 \pi} f(\theta) \frac{d \theta}{2 \pi}$.
Fix a proper interval $I$ and a smooth function
$m \in C^{\infty}(S^{1}, \RR)$ such that $\supp(m) \subset I$. 
Let $f$ be a smooth function supported in $\tilde{I}\supset I$ such that
$f|_{I}=1$.
We regard $f$ as a $2 \pi$ periodic function and let $t \in \RR$ with $f(t)=0$.
We define 
\begin{align}\label{contm}
L(x)=f(x) \int_{t}^{x}
m(y) dy\,.
\end{align}
Here we choose $x, t \in \RR$ such that the image of the interval $(t, x)$ is contractible, \ie lies inside the cut circle $S^1 \setminus \{e^{it}\}$.  Note that because of the support of $f$ this is well-defined and independent of the particular choices and that $f$ is continuous and thus smooth on the circle.
Define
\begin{align}
  \label{defngaug}\alpha_{m}=\Ad W([L])\,.
\end{align}
While this follows indirectly, it is enlightening to check directly that $\Ad W([L]) W([f])$ with $[f] \in H(I)$ does really only depend on the equivalence classes $[L]$ and $[f]$ and not on any of the choices.
\begin{rmk}\label{classindependency}
Let $I=(a, b)\subset S^1$.
Fix $d \in \RR$ and let $c \in \RR$.
Let $f, g, m, $ as defined above and $M$ an antiderivative of $m$ on $S^1\setminus \{p\}$ for some point $p\not\in \hat I$. 
Define $h(x)=g(x)+c$ and $L(x)=M(x)f(x)+d$ with $g(a)=g(b)=M(a)=0$ and $M'=m$ such that $\supp(g)$ and $\supp(m)$ in $(a, b)$ and $\supp(f) \in (a-e, b+e):=\tilde{I} \subset \SOne$ with $f|(a,b)=1$.
Then
$\omega(h, L)=\omega(g, L)$.
The proof is as follows.
As $L$ is a periodic function, we have $\int_{S^1}L'=0$.
Let us  consider
\begin{align*}
\omega(h, L)&=\frac{1}{2} \int_{S^1} (h(\theta)+c)L'(\theta) \frac{d\theta}{2 \pi}\\
&=\frac{1}{2}  \int_{I} g(\theta)m(\theta)\frac{d\theta}{2 \pi}  + c\int_{S^1} L'(\theta) \frac{d\theta}{2 \pi}\\
&=\frac{1}{2}  \int_{S^1} g(\theta)m(\theta)\frac{d\theta}{2 \pi}\\
&=\omega(g, L)\,.
\end{align*}
\end{rmk}
\begin{prop}
  \label{GauAut} The formula of the automorphism $\alpha_{m}$ on the Weyl algebra
  is given by
  \begin{align}
  \label{Gaut}
    \alpha_{m}(W([g]))=e^{i\int_{0}^{2 \pi} g(\theta)m(\theta) \frac{d \theta}{2 \pi}}W([g])\,.
  \end{align}
  In particular, $\alpha_{m}$ is an automorphism on $\A_{\RR}(I)$ and independent of
  the choice of $f$.
\end{prop}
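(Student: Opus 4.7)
The plan is to invoke the Weyl relations~\eqref{eq:Comuwl} directly. Since $\alpha_{m} = \Ad W([L])$, conjugation of one Weyl operator by another gives
\begin{align*}
  \alpha_{m}(W([g])) = W([L])\,W([g])\,W([L])^\ast = e^{-2\ima\,\omega([L],[g])}\,W([g]),
\end{align*}
so the whole claim reduces to computing $\omega([L],[g])$ for $g$ supported in $I$ and $L$ as in~\eqref{contm}.

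For that computation I would use the explicit formula~\eqref{eq:sesquilinear_form} and integrate by parts: since $L$ and $g$ are smooth and $2\pi$-periodic on $S^{1}$, the boundary terms vanish, and
\begin{align*}
  \omega([L],[g]) = \frac{1}{2}\int_{0}^{2\pi} L(\theta)\,g'(\theta)\,\frac{d\theta}{2\pi} = -\frac{1}{2}\int_{0}^{2\pi} L'(\theta)\,g(\theta)\,\frac{d\theta}{2\pi}.
\end{align*}
Differentiating $L(x) = f(x)\int_{t}^{x} m(y)\,dy$ gives $L'(x) = f'(x)\int_{t}^{x} m(y)\,dy + f(x)\,m(x)$, and since $\supp(g)\subset I$ and $f|_{I}\equiv 1$ (so $f'|_{I}\equiv 0$), the integrand $L'\,g$ reduces to $m\,g$ on the support of $g$. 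Plugging this back produces exactly the announced formula~\eqref{Gaut}.

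Independence from the choice of $f$ (and of the base point $t$) is then automatic, because the final expression involves only $g$ and $m$; alternatively, in the spirit of Remark~\ref{classindependency}, any two admissible choices differ by a function that pairs trivially via $\omega$ with any $[g] \in H_{\RR}(I)$. Finally, $\alpha_{m}$ is globally the inner automorphism $\Ad W([L])$ of $\mathcal{B}(e^{\Hil})$, and the formula shows it sends each generator $W([g])$ of $\A_{\RR}(I)$ to a scalar multiple of itself; since these generators span an ultraweakly dense $\ast$-subalgebra, $\alpha_{m}$ preserves $\A_{\RR}(I)$ and restricts to an automorphism there. The main point to be careful about is that $L$ remain globally smooth on $S^{1}$ (which is the role of $f$ vanishing near the cut point $e^{\ima t}$), so that $[L]$ makes sense as a vector in the one-particle space to which the Weyl functor applies; no genuine difficulty is anticipated beyond this bookkeeping.
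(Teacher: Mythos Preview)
Your proof is correct and follows essentially the same route as the paper: both conjugate $W([g])$ by $W([L])$ via the Weyl relations to obtain a phase $e^{2\ima\omega([g],[L])}$ (equivalently $e^{-2\ima\omega([L],[g])}$) and then identify that phase with $e^{\ima\int gm}$ using the explicit form of $\omega$ together with $\supp(g)\subset I$ and $f|_I\equiv 1$. The only cosmetic difference is that the paper expands the Weyl product step by step and reads off $\int gL'$ directly, whereas you quote the conjugation formula and reach the same integral via an integration by parts.
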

\begin{proof}
  Let $L$ be as in Lemma~\ref{classindependency}.
   Then evaluating $\alpha_{m}$ on the Weyl unitary
  $W([g])$, we get
  \begin{align*}
    \Ad_{W([L])}(W([g])) & = W([L]) W([g]) W([-L])                                                     \\
                         & =e^{-i \omega (L, g)}W([L+g])W([-L])                                        \\
                         & =e^{-i \omega (L, g)- i\omega(L+g, -L)}W([g])                               \\
                         & =e^{-i \omega(L, g)+ i w(g, L)}W([g])                                       \\
                         & =e^{2i \omega (g, L)}W([g])                                                 \\
                         & =e^{i \int_{0}^{2 \pi} g(\theta) m(\theta) \frac{d \theta}{2 \pi}}W([g])\,.
  \end{align*}
  Observe that $\alpha_{m}$ is independent of the choice of $f$. Since $\alpha_{m}$
  is unitarily implemented, we have $\alpha_{m} \in \Aut(\A_{\RR}(I))$.
\end{proof}
\begin{lem}
  \label{lem:InnerAuto} Consider a proper interval $I:=(a, b) \subset \SOne$ and a smooth function $m
  \colon S^1 \rightarrow \RR$ with $\supp m \subset I$ and $\int_{I} m=0$. Then
  $\alpha_{m}$ is an inner automorphism of $\A_{\RR}(I)$.
\end{lem}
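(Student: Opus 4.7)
The plan is to produce an explicit unitary lying in $\A_\RR(I)$ that implements $\alpha_m$, from which innerness follows immediately. The key input is that the hypothesis $\int_I m = 0$ makes the cutoff $f$ appearing in \eqref{contm} unnecessary: one can use the honest antiderivative of $m$ as the exponent of the Weyl unitary.

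Concretely, I would set
\[
    M(x) := \int_a^x m(y)\,dy
\]
and check that $M$ is a smooth $2\pi$-periodic function with $\supp M \subset I$. Smoothness is immediate from smoothness of $m$; periodicity follows from $M(x+2\pi) - M(x) = \int_I m = 0$; and since $\supp m$ is a compact subset of the open interval $I = (a,b)$, there is $\varepsilon > 0$ with $\supp m \subset [a+\varepsilon, b-\varepsilon]$, so $M$ vanishes on neighborhoods of $a$ and $b$ and hence $\supp M \subset I$. Consequently $[M] \in H_\RR(I)$ and $W([M]) \in \A_\RR(I)$.

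Next, I would verify $\alpha_m = \Ad W([M])$ by testing on Weyl generators, which suffices since these generate $\A_\RR(I)$. Running the same Weyl-relation manipulation as in the proof of \rprop{GauAut}, one obtains
\[
    \Ad W([M])\bigl(W([g])\bigr) = e^{2i\,\omega(g, M)}\,W([g])
\]
for $g \in L_I\RR$. Using $M' = m$ together with the explicit form \eqref{eq:sesquilinear_form} of $\omega$ gives $\omega(g, M) = \tfrac12 \int_0^{2\pi} g(\theta) m(\theta) \tfrac{d\theta}{2\pi}$, which matches the formula \eqref{Gaut} for $\alpha_m$. Hence $\alpha_m = \Ad W([M])$ with $W([M]) \in \A_\RR(I)$, so $\alpha_m$ is inner.

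The only subtlety, and thus the step to be most careful about, is the support and periodicity claim for $M$: without $\int_I m = 0$ the primitive $M$ jumps by $\int_I m$ across each period, which is exactly why the cutoff $f$ was introduced in \eqref{contm}, and the resulting implementer $W([L])$ then lies only in the larger algebra $\A_\RR(\tilde I)$ and generically gives an outer automorphism of $\A_\RR(I)$. The hypothesis $\int_I m = 0$ is precisely what removes this obstruction and pulls the implementer into $\A_\RR(I)$.
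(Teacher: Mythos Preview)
Your proposal is correct and takes essentially the same approach as the paper: both observe that the hypothesis $\int_I m = 0$ forces the antiderivative $M(x)=\int_a^x m$ to have support in $I$ (rendering the cutoff $f$ in \eqref{contm} superfluous), whence $W([M])\in\A_\RR(I)$ implements $\alpha_m$. You simply spell out the periodicity and support verification and the appeal to \rprop{GauAut} more explicitly than the paper's terse argument.
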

\begin{proof}
  As $\int m=0$, we have $M(x)=f(x) \int_{a}^{x} m(y) dy= \int_{a}^{x} m(y) dy ~( f|I=1)$ and
  therefore $M$ has support in $I$. Thus $[M] \in H_{\RR}(I)$. Therefore
  $\alpha_{m}$ is an inner automorphism by definition.
\end{proof}
\begin{thm}
  \label{equivlsamecharge}
   Let $I \subset S^1$ be a proper interval and $\ell, m \in C^{\infty}(S^1, \RR)$ with support in $I$.
    Consider the automorphisms $\alpha_{\ell}$ and $\alpha_m$ on $\A_{\RR}(I)$ defined by the Equation~\eqref{Gaut}.
    If $\alpha_{\ell}$ is inner equivalent to $\alpha_{m}$,
  then $\int{\ell}=\int{m}$.
\end{thm}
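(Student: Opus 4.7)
My plan is to first reduce to a converse of Lemma~\ref{lem:InnerAuto}, namely that $\alpha_n$ being inner in $\A_\RR(I)$ (with $\supp n\subset I$) forces $\int n=0$, and then prove this converse via a coherent-state computation on the vacuum. For the reduction, a direct computation on Weyl unitaries using Proposition~\ref{GauAut} shows $\alpha_\ell \circ \alpha_m^{-1} = \alpha_{\ell-m}$ as automorphisms of $\A_\RR(I)$, since each gauge factor contributes the phase $e^{i\int g\ell}$ or $e^{-i\int gm}$. Hence inner equivalence of $\alpha_\ell$ and $\alpha_m$ amounts to inner-ness of $\alpha_n$ for $n := \ell - m$.

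Now suppose $\alpha_n = \Ad u$ for some $u \in \U(\A_\RR(I))$. I plan to propagate this inner implementation to proper intervals $J \supsetneq I$. The phase formula of Proposition~\ref{GauAut} defines an automorphism $\alpha_n^J \in \Aut(\A_\RR(J))$ restricting to $\alpha_n$ on $\A_\RR(I)$ and acting trivially on $\A_\RR(J\cap I')$, because Weyl generators $W([g])$ supported there satisfy $\int gn = 0$. Haag duality places $u \in \A_\RR(I) \subset \A_\RR(I')'$, so $\Ad u$ is also trivial on $\A_\RR(J\cap I')$. Strong additivity of the $\U(1)$-current net gives $\A_\RR(J) = \A_\RR(I) \vee \A_\RR(J\cap I')$, hence $\alpha_n^J = \Ad u$ on all of $\A_\RR(J)$.

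Composing the vacuum state $\varphi = \langle\Omega,\slot\Omega\rangle$ with $\alpha_n^J = \Ad u$ yields the vector state $\varphi_{u^*\Omega}$, which on Weyl unitaries reads $\varphi_{u^*\Omega}(W([g])) = e^{i\int gn}\,e^{-\|[g]\|^2/2}$. This is a pure coherent quasi-free state and must agree on the Weyl subalgebra with $\varphi_{W(\zeta)\Omega}$ for some $\zeta \in \Hil$ determined by the character matching $2\omega(\zeta,[g]) = \int gn$ for all $[g] \in H_\RR(J)$. Using $2\omega(\zeta,[g]) = -\int \zeta' g$ (integration by parts, $\zeta$ smooth periodic), varying $g$ with support in $J \cap I'$ yields $\zeta' = 0$ on $J \cap I'$, while matching on $I$ forces $\zeta' = -n$. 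Letting $J$ exhaust the proper open intervals containing $I$ produces a smooth $\zeta$ on $S^1$ with $\zeta' = -n$ on $I$ and $\zeta' = 0$ on $I'$; periodicity $\int_{S^1}\zeta' = 0$ then yields $-\int n = 0$.

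The main obstacle I anticipate is making the $J \to S^1\setminus\{p\}$ limit rigorous: for each $J$ the element $\zeta$ is only determined modulo the symplectic complement $H_\RR(J)^\omega$, and one must show that consistent choices of $\zeta_J$ converge to a single $\zeta\in\Hil$ in the limit. This step is essentially equivalent to identifying the DHR class of the globally localized endomorphism $\hat\alpha_n$ of the $\U(1)$-current net with the real charge $\int n$, so that local inner-ness forces the trivial DHR sector, which is exactly $\int n = 0$.
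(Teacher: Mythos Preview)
Your reduction to the inner-ness of $\alpha_{n}$ for $n=\ell-m$, and the extension of the implementation $\Ad u$ from $\A_{\RR}(I)$ to a larger $\A_{\RR}(J)$ via locality and strong additivity, are both correct and coincide with the paper's opening move. The coherent-state step, however, has exactly the gap you yourself flag, and it is a genuine one. Asserting that $\varphi_{u^{\ast}\Omega}$ agrees with $\varphi_{W(\zeta)\Omega}$ for some $\zeta\in\Hil$ presupposes that the real-linear character $[g]\mapsto\int gn$ on $H_{\RR}(J)$ is represented by an element of $\Hil$ via the symplectic form; you then further need $\zeta$ to be a smooth periodic function in order to write $2\omega(\zeta,[g])=-\int\zeta'g$ and invoke $\int_{S^{1}}\zeta'=0$. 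But if $\int n\neq 0$ there is no smooth periodic primitive of $-n$, so the existence of such a $\zeta$ is precisely what is at stake --- the argument is circular. The limit $J\nearrow S^{1}\setminus\{p\}$ does not rescue it without an independent reason for the $\zeta_{J}$ to stabilize, and as you observe, supplying that reason is tantamount to the sector classification you are trying to prove.

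The paper sidesteps the coherent-state analysis with a single well-chosen test function. After passing to $\tilde I\supsetneq I$, take $f\in L_{\tilde I}\RR$ with $f|_{I}\equiv c$ a constant. Then $f'\equiv 0$ on $I$, so $\omega([g],[f])=\tfrac{1}{2}\int gf'=0$ for every $g$ with $\supp g\subset I$; hence $W([f])$ commutes with every Weyl generator of $\A_{\RR}(I)$ and therefore with $u\in\A_{\RR}(I)$. Inserting $W([f])$ into the intertwining relation $u\,\alpha_{\ell}(W([f]))=\alpha_{m}(W([f]))\,u$ yields $e^{ic\int\ell}\,uW([f])=e^{ic\int m}\,W([f])u$, and the commutation collapses this to $e^{ic\int\ell}=e^{ic\int m}$ for all $c\in\RR$, whence $\int\ell=\int m$. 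The missing idea in your approach is this: a test function constant on the localization region is symplectically orthogonal to $H_{\RR}(I)$ (so $u$ does not see it) yet still detects the zero-mode charge $\int n$ through the gauge phase in Proposition~\ref{GauAut}.
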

\begin{proof}
  Let $u \in \U(\A_{\RR}(I))$ be a unitary in $\A_{\RR}(I)$ such that
  \begin{align*}
    u \alpha_{\ell}(x)=\alpha_{m}(x)u\,.
  \end{align*}
  By the isotony property of conformal nets, we have
  $\A_{\RR}(I) \subset \A_{\RR}(\tilde I)$ for any interval $\tilde{I}$ containing $I$. 
  We
  claim that $u$ is an intertwiner, \ie
  $u \in (\alpha_{m}, \alpha_{n})_{\A(\tilde I)}$. 
  Here $\cI$ denotes the set of proper intervals of
  $\SOne$ and $I_{1}, I_{2} \in \cI$ are disjoint intervals such that the
  disjoint union $I_{1}\sqcup{p}\sqcup I_{2}\in \cI$ for some point $p$.
  Recall that the strong additivity
  property of $\U(1)$-current net (see \cite[Proposition~3.11]{DoXu2006}) implies $\A_{\RR}(I)=\A_{\RR}(I_1) \vee \A_{\RR}(I_2)$, where $I_1$ and $I_2$ are proper intervals obtained by removing an interior point of $I$ and $\A_{\RR}(I_1) \vee \A_{\RR}(I_2)$ denotes the von Neumann algebra generated by $\A_{\RR}(I_1)$ and $\A_{\RR}(I_2)$.

  Consider the intervals
  \begin{align*}
    I = (a, b)\,, \qquad \tilde{I}=(e, c)\,, \quad I_{1} = (e, a)\,,\quad I_{2} = (b, c)\,,
  \end{align*}
  such that $e<a<b<c$ and $e, a, b, c \in \RR$. 
  Notice that $I$, $I_{1}$, and
  $I_{2}$ are pairwise disjoint. 
  As a consequence, we immediately see that $\A_{\RR}(I)$ and
  $\A_{\RR}(I_{1})$ commute elementwise.
   Likewise $\A_{\RR}(I)$ and $\A_{\RR}(I_{2})$ also commute
  elementwise. 
  We only check the commutation relation for $x \in \A_{\RR}(I_{1})$ or
  $\A_{\RR}(I_{2})$. 
  Since the automorphisms $\alpha_{\ell}$ and $\alpha_{m}$ are localized,
  it follows that
  \begin{align}
    \label{loc pr}\alpha_{\ell}(x)=\alpha_{m}(x)=x \text{ when }x \in \A_{\RR}(J)\,, & ~J \subset S^1\setminus \{-1\}\,, I\cap J =\emptyset\,.
  \end{align}
  Nonetheless, one gets the following:
  \begin{align*}
    u \alpha_{\ell}(x) & = ux            &  & (\text{by Equation~ \eqref{loc pr}})             \\
                       & =xu             &  & (\text{by locality of nets})                     \\
                       & =\alpha_{m}(x)u &  & (\text{again using Equation~ \eqref{loc pr}})\,,
  \end{align*}
  whenever $x \in \A_{\RR}(I_{1})$ or $\A_{\RR}(I_{2})$. 
  More precisely, by using the strong
  additivity, we have
  \begin{align*}
    u \alpha_{\ell}(x)= \alpha_{m}(x) u \text{ for all }x \in \A_{\RR}(\tilde{I})\,.
  \end{align*}
  We have to show that
  \begin{align}\label{uteq}
   u W([f])= W([f]) u
   \end{align}
for any smooth constant function $f$
  on $I$.
   Since $u \in \A_{\RR}(I)$, we can approximate $u$ by Weyl unitaries, and
  it is enough to check the Equation~\eqref{uteq} for $u=W([g])$ for some smooth function $g$ supported
  on $I$.
   Since $\supp(g) \subset I$ and $f'\equiv 0$ on $I$, we have
  $\omega(g, f)=0$. 
  Thus
  \begin{align*}
    W([g])W([f])=W([g+f])=W([f]) W([g])\,.
  \end{align*}
  Therefore, we get $u W([f])= W([f]) u$ for any smooth function $f$ constant
  on $I$. 
  By the relation $u \alpha_{\ell}(x)=\alpha_{m}(x) u$ and plugging into
  $x=W([f])$ in the ambient relation, we obtain
  \begin{align*}
    u \alpha_{\ell}(W([f]))                                                   & =\alpha_{m}(W([f])) u                                                          \\
    e^{{\frac{\ima}{2 \pi} \int_{0}^{2 \pi}}f(\theta) l(\theta) d \theta}uW([f]) & =e^{{\frac{\ima}{2 \pi} \int_{0}^{2 \pi}}f(\theta) m(\theta) d\theta}W([f]) u\,, & f \in H_{\RR}(\tilde{I})\,.
  \end{align*}
  By choosing $f$ with $f|_{I} \equiv c$ for any constant $c\in \RR$, we get
  \begin{align*}
    \left[e^{\frac{1}{2 \pi} \int_{I} c \ell(\theta) d \theta}- e^{\frac{1}{2 \pi} \int_{I} c m(\theta) d\theta}\right] u W([f]) & =0\,.
  \end{align*}
  Consequently, we get
  \begin{align}
    \label{expon}e^{\frac{1}{2 \pi} c\int_{I} \ell(\theta) d \theta}= e^{\frac{1}{2 \pi} c\int_{I} m(\theta) d\theta} & (c\in\RR)\,.
  \end{align}
  Since the Equation~\eqref{expon} holds for any non-zero $c\in\RR$, we
  conclude $\int_{I}\ell= \int_{I}m$. 
  Indeed, we obtain
  $\int_{0}^{2 \pi}\ell(\theta) \frac{d \theta}{2 \pi}=\int_{0}^{2 \pi}m(\theta
  )\frac{d \theta}{2 \pi}$
  because $\supp(l)$ and $\supp(m)$ are contained in $I$.
\end{proof}

Before going to the next subsection, given  $I, J  \in \cI$ contained in bigger inverval $K \in \cI$, we recall the notation $I < J$ and $I > J$.
Here $I < J$ means that $I$ is left of $J$ clockwise inside $K$ and $I > J$ means that $I$ is right of $J$ clockwise inside $K$.
\subsection{The braiding for the \texorpdfstring{$\U(1)$}{U(1)}-current net}
\label{ssec:TheBraidingOfUOneCurrentNet}
\begin{prop}
  \label{prop:BraidingFormula} Let $I_{1} < I_{2} < I_{3}$ be subintervals of a given interval $I \subset S^1$
  and let $\ell_{1},\ell_{2},\ell_{3}\colon S^{1}\to \RR$ be smooth functions with
  $\supp \ell_{i}\subset I_{i}$ and $\int_{0}^{2 \pi}\ell_{1}(\theta) \, \frac{d\theta}{2
  \pi}= \int_{0}^{2 \pi}\ell_{3}(\theta)\, \frac{d\theta}{2 \pi}$. Let
  $\rho_{i}=\rho_{\ell_i}$ and recall that
  \begin{align*}
    \rho_{i}(W([f]))=e^{\textstyle\ima\int_{0}^{2 \pi} f(\theta) \ell_i(\theta)\frac{d\theta}{2\pi} }W([f])\,.
  \end{align*}
  Then the Fredenhagen-Rehren-Schroer braiding is given by
  $\varepsilon(\rho_{1}, \rho_{2})=e^{2 \pi \ima q(\ell_1) q(\ell_2)}\cdot 1_{\A_{\RR}(I)}$,
  where $\A_{\RR}(I)$ is the algebra defined in the Equation~\eqref{CCR} and $q_{\RR}(\ell)=\int_{I}\ell$ is the associated charge and $\varepsilon(\rho_{2}
  , \rho_{1})=1$.

  Moreover, the braiding for the localized automorphisms $\rho_{p \ell}$ for
  any real $p$ is given by
  \begin{equation}
    \label{selfbraid}\varepsilon(\rho_{p\ell}, \rho_{q\ell}) = \e^{\ima \pi p
    q}\cdot 1_{\A_{\RR}(I)}\,,
  \end{equation}
  where the charge of $\rho_{\ell}$ is 1, and $p$ and $q$ are real numbers.
\end{prop}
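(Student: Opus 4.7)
The plan is to compute the DHR braiding formula $\varepsilon(\rho,\sigma) = \sigma(u_\rho^{\ast})u_\sigma^{\ast}u_\rho\rho(u_\sigma)$ from \eqref{DHRb} by realizing the charge transporters explicitly as Weyl unitaries, then exploiting the fact that the gauge automorphisms $\rho_i$ act on Weyl unitaries by a scalar given by a pairing of functions. The key observation is that, of the three representatives $\rho_1, \rho_2, \rho_3$, the middle one $\rho_2$ is already localized in a suitable interval and needs no transport, so one of the two transporters in the braiding formula is trivial and the computation collapses to a single Weyl-algebra manipulation.

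The concrete construction of the transporter goes as follows. Since $q(\ell_1)=q(\ell_3)$, the function $\ell_3 - \ell_1$ has vanishing total integral, hence admits a smooth $2\pi$-periodic primitive $h$ with $h' = \ell_3 - \ell_1$. Choosing the constant of integration appropriately, $h$ vanishes on the outer complement of $I_1\cup I_3$ and takes the constant value $-2\pi q(\ell_1)$ on the middle interval $I_2$. One then checks via the Weyl relation $W([h])W([f])W([-h]) = e^{-2\ima\omega(h,f)}W([f])$ and integration by parts that $u := W([h])$ intertwines $\rho_1$ and $\rho_3$, i.e.\ $u\rho_1(x) = \rho_3(x)u$ for all $x\in\A_{\RR}(I)$. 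Thus $u$ is a legitimate charge transporter $u_{\rho_1}\in(\rho_1,\rho_3)$ whose existence is guaranteed by Theorem~\ref{equivlsamecharge}.

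For the two braidings, I would first treat the case where one transporter is trivial: choosing $\hat\rho_2 = \rho_2$ (already in $I_2$) and using $\rho_3$ as the transported version of $\rho_1$, one transporter is $1$ and the other is $W([h])$. Substituting into the braiding formula, all terms involving the trivial transporter drop out and the nontrivial term reduces to $\rho_2(W([\pm h]))W([\mp h]) = \exp\bigl(\pm\ima\int h\ell_2\,\tfrac{d\theta}{2\pi}\bigr)$. Since $\ell_2$ is supported in $I_2$ and $h$ equals the constant $-2\pi q(\ell_1)$ there, the integral collapses to $-2\pi q(\ell_1)q(\ell_2)$, which yields the phase $e^{2\pi\ima q(\ell_1)q(\ell_2)}$. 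The opposite braiding is then trivial because in that configuration both representatives can be left in place (the middle representative $\rho_2$ separates the two available positions of $\rho_1$, so one side requires no transport at all).

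The self-braiding \eqref{selfbraid} would follow by a splitting argument: write $\ell = \ell' + \ell''$ as a sum of two smooth functions supported on disjoint subintervals with $\int\ell' = \int\ell'' = \tfrac{1}{2}$, so that $\rho_{p\ell}$ is unitarily equivalent to $\rho_{p\ell'}\circ\rho_{p\ell''}$, and similarly for $\rho_{q\ell}$. Applying the first part of the proposition to the four pairwise braidings between $\{\rho_{p\ell'},\rho_{p\ell''}\}$ and $\{\rho_{q\ell'},\rho_{q\ell''}\}$, and using the hexagon axioms from Section~\ref{ssec:Braiding of tensor subcategory of endomorphisms}, one arrives at the factor $e^{\ima\pi pq}$, whose ``half'' exponent compared to $e^{2\pi\ima q_1 q_2}$ reflects precisely that only two of the four pairings contribute nontrivially. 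The main technical obstacle is bookkeeping: verifying independence of the transporter from the choice of primitive $h$ (which amounts to Lemma~\ref{lem:InnerAuto} since two such primitives differ by a function of zero charge), and tracking the signs, the orientation conventions for ``$I_1<I_2$'', and the factor $2\pi$ normalization between $\omega$ and the pairing $\int f\ell\,\tfrac{d\theta}{2\pi}$.
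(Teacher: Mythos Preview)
Your treatment of the first claim, $\varepsilon(\rho_1,\rho_2)=e^{2\pi\ima q(\ell_1)q(\ell_2)}$, is essentially the paper's argument: the primitive you call $h$ is the paper's $L$ with $L'=\ell_3-\ell_1$, the transporter is $W([L])$, the second transporter is taken trivial, and the phase drops out because $L$ is constant on $I_2$.

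The gap is in your self-braiding argument. Splitting $\ell=\ell'+\ell''$ with half-charges and applying the hexagon axioms decomposes $\varepsilon(\rho_{p\ell},\rho_{q\ell})$ into four scalar factors, but only the two \emph{cross} terms $\varepsilon(\rho_{p\ell'},\rho_{q\ell''})$ and $\varepsilon(\rho_{p\ell''},\rho_{q\ell'})$ are covered by the first part of the proposition. The remaining two factors $\varepsilon(\rho_{p\ell'},\rho_{q\ell'})$ and $\varepsilon(\rho_{p\ell''},\rho_{q\ell''})$ are again self-braidings of automorphisms supported in a common interval, so you are back where you started, with charges halved. At best this yields the functional equation $\phi(p,q)=\phi(p/2,q/2)^2\,e^{\ima\pi pq/2}$, which $e^{\ima\pi pq}$ satisfies but which does not by itself determine $\phi$. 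One could try to close the loop by invoking that the braiding is a continuous bicharacter in the charges together with the monodromy relation, but that is substantially more than what you wrote and is not how the paper proceeds.

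The paper's route is a direct computation in the same spirit as the first part: pick an auxiliary $\ell_2$ with $q(\ell_2)=1$ supported in $I_2>I_1$, take $L$ with $L'=\ell_2-\ell_1$, so that $W([pL])\in(\rho_{p\ell_1},\rho_{p\ell_2})$, and evaluate
\[
\varepsilon(\rho_{p\ell_1},\rho_{q\ell_1})=\rho_{q\ell_1}\!\bigl(W([-pL])\bigr)W([pL])
= \e^{-\ima pq\int_{I_1}\ell_1 L\,\frac{d\theta}{2\pi}}.
\]
On $I_1$ one has $L=-L_1$ with $L_1'=\ell_1$, so the exponent becomes $\ima pq\int_{I_1}L_1'L_1\,\tfrac{d\theta}{2\pi}=\tfrac{\ima pq}{2}\int_{I_1}\tfrac{d}{d\theta}L_1^2\,\tfrac{d\theta}{2\pi}$, and since $L_1$ runs from $0$ to $2\pi$ across $I_1$ this equals $\ima\pi pq$. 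This integration-by-parts step is the missing ingredient; it replaces your splitting argument entirely and gives \eqref{selfbraid} in one stroke.
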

\begin{proof}
  Let $I_{1} < I_{2} < I_{3}$ be subintervals of $I$ and let $\ell_{1},\ell_{2}
  ,\ell_{3}\colon S^{1}\to \RR$ be smooth functions with $\supp \ell_{i}\subset
  I_{i}$ and $\int_{0}^{2 \pi}\ell_{1}(\theta) \frac{d\theta}{2 \pi}=\int_{0}^{2
  \pi}\ell_{3}(\theta) \frac{d\theta}{2 \pi}$. Let $\rho_{i}=\rho_{\ell_i}$
  and recall that
  \begin{align*}
    \rho_{i}(W([f]))=e^{\textstyle\ima\int_{0}^{2 \pi} f(\theta) \ell_i(\theta)\frac{d\theta}{2\pi} }W([f])\,.
  \end{align*}

  Let $L$ be a smooth function with support in $I$ and $L'=\ell_{3}-\ell_{1}$.
  Then
  \begin{align}
    \label{subbraid}\rho_{3}\rho_{1}^{-1}= \rho_{\ell_3-\ell_1}= \Ad W([L])
  \end{align}
  on $\A_{\RR}(I)$. 
  Thus $W([L]) \in (\rho_{1},\rho_{3})$. 
  Then
  $\rho_{2}(W([L])^{\ast})W([L]) \in (\rho_{1}\rho_{2},\rho_{2}\rho_{1})$. 
  We want
  to compute the following:
  \begin{align}
    \label{cptbraid1}
    \rho_{2}(W([-L]))W([L]) 
      & = \e^{\textstyle-\ima \int_I\ell_2(\theta)L(\theta)\frac{d\theta}{2\pi}}W([L])^{\ast} W([L])                                                                                 \\
      & = \e^{\textstyle-\ima \int_I\ell_2(\theta)L(\theta)\frac{d\theta}{2\pi}}\cdot 1_{\A_{\RR}(I)}                                                                                      \\
      & = \e^{\textstyle-\ima \int_{I_2}\ell_2(\theta)L(\theta)\frac{d\theta}{2\pi}}\cdot 1_{\A_{\RR}(I)}                                                                                  \\
      & = \e^{\textstyle-\ima \int_{I_2}\ell_2(\theta)\int_{-\infty}^\theta L'(\vartheta)\,d\vartheta\frac{d\theta}{2\pi}}\cdot 1_{\A_{\RR}(I)} \\
      & = \e^{\textstyle-\ima \int_{I_2}\ell_2(\theta)\int_{I_1}L'(\vartheta)\,d\vartheta\frac{d\theta}{2\pi}}\cdot 1_{\A_{\RR}(I)}\qquad\text{(support of $L'$)}                                \\
      & = \e^{\textstyle\ima \int_{I_2}\ell_2(\theta)\int_{I_1}\ell_1(\vartheta)\,d\vartheta\frac{d\theta}{2\pi}}\cdot 1_{\A_{\RR}(I)}                                                     \\
      & = \e^{\textstyle2\pi\ima\int_{I}\ell_2(\theta)\frac{d\theta}{2\pi}\cdot \int_{I}\ell_1(\vartheta)\frac{d\vartheta}{2\pi}}\cdot 1_{\A_{\RR}(I)}                                     \\
      & = \e^{\textstyle 2\pi\ima q(\ell_1)q(\ell_2)}\cdot 1_{\A_{\RR}(I)}\,,
  \end{align}
  where
  \begin{align}
    \label{charge}q_{\RR}(\ell) = \int_{I}\ell(\theta)\frac{d\theta}{2\pi}\,.
  \end{align}
  Note that $\varepsilon(\rho_{1},\rho_{2}) = \rho_{2}(W([-L]))W([L])$ is the
  Fredenhagen--Rehren--Schroer braiding and
  $\varepsilon(\rho_{2}, \rho_{1}) = 1$, for instance look at \cite{GuLo1996}.
  In particular, we have
  \begin{align*}
    \varepsilon(\rho_{\ell}, \rho_{\ell})^{2} = \varepsilon(\rho_{1}, \rho_{2})\varepsilon(\rho_{2},\rho_{1}) = e^{2\ima\pi q(\ell)^2}\,.
  \end{align*}

  Let us fix $\ell_{1}$ in $I_{1}$ with $q_{\RR}(\ell_{1})=1$ and let us compute now
  the braiding between $\rho_{p\ell_1}$ and $\rho_{q\ell_1}$.

  Similarly, let $\ell_{2}$ be localized in $I_{2}$. Let $p$ be any real
  number. Let $L$ be as above with $L' = \ell_{2} - \ell_{1}$, then
  \begin{equation}
    \rho_{p\ell_2}\rho^{-1}_{p\ell_1}= \rho_{p(\ell_2-\ell_1)}= \Ad W([pL])
  \end{equation}
  and thus $W([pL]) \in (\rho_{p\ell_1}, \rho_{p\ell_2})$.

  Then
  $\rho_{q\ell_1}(W([pL])^{\ast}) W([pL])\in (\rho_{p\ell_1}\rho_{q\ell_1}, \rho
  _{q\ell_1}\rho_{p\ell_1})$
  is the braiding. 
  Consequently, we have
  \begin{align}\label{fnkycmpt}
    \rho_{q\ell_1}(W([-pL]))W([pL]) 
      & = \e^{\textstyle-\ima pq \int_{I_1}\ell_1(\theta)L(\theta)\frac{d\theta}{2\pi}}W([pL])^{\ast} W([pL])          \\
      & = \e^{\textstyle \ima pq\int_{I_1}\ell_1(\theta)L_1(\theta)\frac{d\theta}{2\pi}}\cdot 1_{\A_{\RR}(I)}                \\
      & = \e^{\textstyle\frac{1}{2}\ima pq\int_{I_1}\frac{d}{d\theta}L_1^2(\theta)\frac{d\theta}{2\pi}}\cdot 1_{\A_{\RR}(I)} \\
      & = \e^{\textstyle\ima \pi pq }\cdot 1_{\A_{\RR}(I)}
  \end{align}
  because $q_{\RR}(\ell_{1})=1$ , where $L_{1}'=\ell_{1}$. 
  Namely, one has
  \begin{equation}\label{chargecmpt}
    1= q(\ell_{1}) = \int_{I}\ell_{1}(\theta)\frac{d\theta}{2\pi}= \frac{1}{2\pi}
    (L_{1}(\partial^{+} I_{1}) - L_{1}(\partial^{-} I_{1})) = \frac{1}{2\pi}L_{1}
    (\partial^{+} I_{1})\,.
  \end{equation}
  Thus $L_{1}$ takes the values $0$ and $2\pi$ on the boundaries
  $\partial^{-}I_{1}$ and $\partial^{+} I_{1}$ of $I_{1}$, respectively.
Therefore the result follows.
\end{proof}
\subsection{The \texorpdfstring{$\RR$}{RR}-action on
\texorpdfstring{$\U(1)$}{U(1)}-current}
\label{ssec: R action} 
Given a proper interval $I \subset S^1$,
choose a smooth function $\ell$ on $S^{1}$ such that the
charge $q(\ell)=\int_{I} \ell(\theta) \frac{d\theta}{2 \pi}=1$ with $\supp(\ell)\subset I$.
Define a $\RR$-action
on $\A_{\RR}(I)$ given in the Equation~\eqref{CCR}  by
\begin{equation}
  \RR \ni x\mapsto [\rho_{x \ell}] \in \Out(\A_{\RR}(I))\,.
\end{equation}
Note that $\rho_{x\ell}$ is a localized automorphism on $\A_{\RR}(I)$ for any $x \in \RR$.

\subsection{General case for \texorpdfstring{$n$}{n}-dimensional torus and
\texorpdfstring{$F$}{F}-action on \texorpdfstring{$\A_F(I)$}{A_F(I)}}
\label{ssec:GeneralCaseForNDimensionalTorus} 
Let $F$ be an $n$-dimensional Euclidean space and $\A_F(I)$ be the algebra defined in the Subsection~\ref{ssec:Local extensions}.
 Define a localized action of $F$ on $\A_F(I)$ as
\begin{align}
  \label{locaut}\rho_{\ell, I}(W([f])) & = \e^{\ima \int \langle f, \ell \rangle_F}W([f])\,, & \supp(\ell) \subset I_{0}, ~ I_{0}\subset I\,,
\end{align}
where $I_{0}$ is any proper interval inside the given interval $I \in \cI$.
Note that $\rho_{\ell, I}$ is
a localized automorphism on $\A_{F}(I)$.
By abuse of a notation, we write $\rho_{\ell}$ for $\rho_{\ell, I}$.
\begin{rmk}
A remark here is that one can define for every proper interval $\tilde I \supset I$ by the same formula.
The cases are 
\begin{itemize}
\item $\tilde{I} \supset I$.
\item $\tilde{I} \cap I$ is connected which directly follows from previous case by doing restriction.
\item $\tilde{I} \cap I$ is disconnected.
\end{itemize}
\textbf{Case 1}:[No overlap] 
Let $I \subseteq \tilde{I}$.
Note that there exists a proper interval $J \Subset \tilde{I}^c$. 
Let $f$ be a smooth non-negative function with $f|_{I}=1$ and $f|_{J}=0$.
Take $M(x)=f(x) \int_{\theta_0}^{x} m(x) dx$, where $p =\e^{\ima \theta_0} \in J$.
Then $\rho_{\hat{I}}=\rho_{\tilde{I}}|_{\A_{\RR}(\hat{I})}, \hat{I} \subset \tilde{I}$ defines a compatible family of endomorphisms.

\textbf{Case 2}:
If $\tilde{I} \cap I:=I_0$ is connected, then $\rho_{\tilde{I}}|_{I_0}$ defines a compatible family of endomorphisms as $I_0 \subset \tilde{I}$ and we can apply case 1.

\textbf{Case 3} [Disconnected Union]:
Let $I \cap \tilde{I} \neq \emptyset$ and disconnected.
Consider intervals $I_1$ and $I_2$ obtained by removing the point $\{p\}$ from $\tilde{I}$, where $p \in I'$ 
Then we have well-defined endomorphisms $\rho_{\tilde{I_1}}$ and
$\rho_{\tilde{I_2}}$ by case 1. 
Then define the endomorphisms as an extension of
\begin{align*}
\rho_{I}(x)&=\rho_{I_1}(x) \text{ if } \rho_{I_1}|_{\A_{\RR}(\tilde{I_2})}=\id_{\A_{\RR}(\tilde{I_2})}\\
&=\rho_{I_1}(x) \text{ if } \rho_{I_1}|_{\A_{\RR}(\tilde{I_1})}=\id_{\A_{\RR}(\tilde{I_1})}\,.
\end{align*}
Note that the ambient endomorphism extends to $S^1$ by the strong additivity property of the $\U(1)$-current net.
\end{rmk}
\begin{rmk}
  Given an even lattice $L \subset F$, we want to show there exists a cocycle
  $b$ such that $\A_{F}(I) \subset \A_{F}(I) \rtimes_{b}L$ is a local extension. 
  Recall local
  extension states that for any two disjoint connected open intervals $I_{1}$ and $I_{2}$
  of $S^{1}$, the associated local algebras $\A(I_{1})$ and $\A(I_{2})$
  commute, \ie $\A(I_{1}) \subset \A(I_{2})'$.
  For this we choose an ordered basis $(e_{1}
  ,\ldots, e_{n})$ and choose $b\colon L \times L \to \{\pm 1\}\leq \U(1)$ by $\ZZ$-bilinearly
  extending
  \begin{align*}
    b(e_{i},e_{j}) & = \begin{cases}(-1)^{\langle e_i,e_j\rangle_F}&\text{if }i\leq j\\ 1&\text{otherwise}\end{cases}
  \end{align*}
  which is a bicharacter and a 2-cocycle, \ie $b\in Z^{2}(L,\U(1))$. Note that
  \begin{align*}
    b(y,z)b(x+y,z)^{\ast} b(x,y+z) b(x,y)^{\ast} = (-1)^{0}=1\,.
  \end{align*}
  In particular, one has
  \begin{align*}b(e_{i},e_{j})b(e_{j},e_{i})^{\ast} = (-1)^{\langle e_i,e_j\rangle_F}\,.
  \end{align*}
  Here $\A_{F}(I) \rtimes_{b} L:=\langle \iota(\A_F(I)), \psi_{i}: i=1, \ldots, n \rangle$.
   Then
  we have
  \begin{align*}
    \psi_{i}\psi_{j} = b(e_{i},e_{j})\psi_{i+j}= b(e_{i},e_{j})b(e_{j},e_{i})^{\ast} \psi_{j}\psi_{i} = (-1)^{\langle e_i,e_j\rangle_F}\psi_{j}\psi_{i}\,.
  \end{align*}
  Thus
  \begin{align*}
    \A_{F}(I) \subseteq \A_{F}(I)\rtimes_{b} L
  \end{align*}
  is a local extension. Note that $\rho_{\ell, I}\in \Aut(A_{F}(I))$. Let us choose
 a fixed smooth function $\ell_{1}\colon S^{1}\to \RR$ with support in $I_{0}$ with charge $q(\ell_1)=\int \ell_1=1$.
  Then, for any $q\in F$, we choose
  $\ell_{q}=\ell_{1}\cdot q\colon S^{1}\to F$ to be the representative
  function for the sector $\rho_{\ell_q}$ of charge $q$, namely $q_{\ell_q}=q$.
  In particular, this defines an $F$-action $F\to \Aut(\A(I_{0}))$ defined by $F
  \ni q\mapsto \rho_{\ell_q}$ rather than only an $F$-kernel $q\mapsto [\rho_{\ell_q}
  ]$. We have
  \begin{align*}
    \rho_{\ell_q}\rho_{\ell_r}= \rho_{\ell_{q+r}}
  \end{align*}
  and the braiding is given by
  \begin{align}
    \label{Braidndim}\varepsilon(\rho_{\ell_q}, \rho_{\ell_r})=\e^{\ima \pi \langle q, r \rangle_F}\,.
  \end{align}
  If $\langle L, L\rangle \in 2\ZZ$, we can do the crossed product by $L$. In
  general, we need to do a twisted crossed product to get the
  locality. This is called a Klein twist in physics literature.
\end{rmk}

\subsection{Braiding for the $n$-dimensional case}
\label{ssec:BraidingForNDimensionalCase}
\begin{thm}
  \label{braidndim} Let $F$ be an $n$-dimensional Euclidean space endowed with
  inner product $\langle \cdot, \cdot \rangle_{F}$. 
  Let
  $I_{1} < I_{2} < I_{3}$ be subintervals of $I$ and let
  $\ell_{1},\ell_{2},\ell_{3}\colon S^1 \to F$ be smooth functions with
  $\supp \ell_{i}\subset I_{i}$ and
  $\int_{0}^{ 2 \pi}\ell_{1}(\theta) \, \frac{d \theta}{2 \pi}= \int_{0}^{2
  \pi}\ell_{3}(\theta)\, \frac{ d \theta}{2 \pi}$.
   Let $\rho_{i}=\rho_{\ell_i}$
  and recall that
  \begin{align*}
    \rho_{i}(W([f]))=e^{\textstyle\ima\int_{0}^{2 \pi} \langle f(\theta), \ell_i(\theta)\rangle_{F}\frac{d\theta}{2\pi} }W([f])\,.
  \end{align*}
  Then the Fredenhagen-Rehren-Schroer braiding is given by $\varepsilon(\rho_{1}
  , \rho_{2})=e^{2 \pi \ima \langle q(\ell_1), q(\ell_2) \rangle_{F}}\cdot 1_{\A_{F}(I)}$,
  where $q(\ell)=\int_{S^1}\ell$ is the associated charge and
  $\varepsilon(\rho_{2}, \rho_{1})=1$.

  Moreover, the braiding for the localized automorphisms $\rho_{p \ell}$ for any
  $p \in F$ is given by
  \begin{equation}
  \label{braidtu}
    \varepsilon(\rho_{p\ell}, \rho_{q\ell}) = \e^{\ima \pi \langle p, q \rangle_{F}}
    \cdot 1_{\A_F(I)}\,,
  \end{equation}
  where the charge of $\ell \colon S^1 \to \RR$ is given by $q_{\RR}(\ell)=1$ and furthermore it  implies that $q_F(f \ell)=f$ for any $f \in F$ and $p, q$ in $F$ .
\end{thm}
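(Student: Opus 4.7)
The plan is to mimic the proof of Proposition~\ref{prop:BraidingFormula} essentially verbatim, substituting the inner product $\langle \cdot, \cdot \rangle_F$ for scalar multiplication wherever it appears, and relying on $\RR$-bilinearity to pull $F$-valued constants out of the inner product. Since the $F$-valued automorphism formula \eqref{locaut} has exactly the same shape as its one-dimensional counterpart, the computation carries through with only cosmetic changes.

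First, the hypothesis $q(\ell_1)=q(\ell_3)$ in $F$ supplies an $F$-valued smooth function $L$ supported in $I$ with $L'=\ell_3-\ell_1$. As in Equation~\eqref{subbraid}, this gives $\rho_3\rho_1^{-1}=\Ad W([L])$ on $\A_F(I)$, hence $W([L])\in(\rho_1,\rho_3)$, and the DHR braiding is $\varepsilon(\rho_1,\rho_2)=\rho_2(W([L])^{\ast})W([L])$. Applying \eqref{locaut} yields $\rho_2(W([-L]))=\e^{-\ima\int\langle L,\ell_2\rangle_F\,d\theta/(2\pi)}W([-L])$, and the Weyl product collapses via $W([-L])W([L])=1$. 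Restricting the integration to $\supp\ell_2\subset I_2$ and using that for $\theta\in I_2$ we have $L(\theta)=\int_{I_1}L'(\vartheta)\,d\vartheta=-\int_{I_1}\ell_1(\vartheta)\,d\vartheta\in F$ (a constant $F$-vector), I pull this constant out of the inner product to obtain $\e^{2\pi\ima\langle q(\ell_1),q(\ell_2)\rangle_F}\cdot 1_{\A_F(I)}$, as claimed. The equality $\varepsilon(\rho_2,\rho_1)=1$ is inherited from Fredenhagen--Rehren--Schroer (\cite{GuLo1996}), since $\rho_2$ is already localized to the right of $\rho_1$ and no charge transport is needed.

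For the self-braiding \eqref{braidtu}, I would fix a real-valued $\ell$ supported in $I_1$ with $q_\RR(\ell)=1$ and a similar $\tilde\ell$ supported in $I_2$, so that for any $p\in F$ the automorphisms $\rho_{p\ell}$ and $\rho_{p\tilde\ell}$ are related by $\Ad W([pL])$ with $L$ the scalar antiderivative satisfying $L'=\tilde\ell-\ell$. Computing $\rho_{q\ell}(W([-pL]))W([pL])$ via \eqref{locaut} produces
\[
\e^{-\ima\int\langle pL(\theta),\,q\ell(\theta)\rangle_F\,d\theta/(2\pi)}
= \e^{-\ima\langle p,q\rangle_F\int L(\theta)\ell(\theta)\,d\theta/(2\pi)}\,,
\]
where the key identity $\langle pL,q\ell\rangle_F = \langle p,q\rangle_F\cdot L\ell$ uses that $L$ and $\ell$ are scalars. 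The remaining scalar integral is precisely the one evaluated in \eqref{fnkycmpt}--\eqref{chargecmpt}: writing $\ell = L_1'$ with $L_1$ normalized so that $L_1(\partial^- I_1)=0$ and $L_1(\partial^+I_1)=2\pi$ by $q_\RR(\ell)=1$, the integrand becomes $\tfrac12(L_1^2)'$, integrates to $\pi$, and yields $\e^{\ima\pi\langle p,q\rangle_F}\cdot 1_{\A_F(I)}$.

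I do not anticipate any real obstacle: every ingredient of the one-dimensional argument extends by $F$-bilinearity, the support arguments for $L$ are independent of the target dimension, and the scalar self-braiding integral is literally the same as in the $\U(1)$-case. The single conceptual check is that $\rho_\ell\mapsto \rho_{\ell,I}$ really does define an $F$-parametrized family of endomorphisms with the additive composition law $\rho_{\ell_1}\rho_{\ell_2}=\rho_{\ell_1+\ell_2}$, which is immediate from the formula~\eqref{locaut}; everything else is bookkeeping with the inner product.
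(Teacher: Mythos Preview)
Your proposal is correct and follows essentially the same route as the paper: the same primitive $L$ with $L'=\ell_3-\ell_1$, the same FRS intertwiner $\rho_2(W([-L]))W([L])$, the same restriction-to-support argument to extract $\langle q(\ell_1),q(\ell_2)\rangle_F$, and for the self-braiding the same scalar $\ell$ with $q_\RR(\ell)=1$ reducing to the one-dimensional integral of \eqref{fnkycmpt}--\eqref{chargecmpt}. The only cosmetic difference is that the paper writes out the chain of equalities in full rather than citing back to the one-dimensional case.
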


\begin{proof}
 Let $I_{1} < I_{2} < I_{3}$ be subintervals of $I$ and let $\ell_{1},\ell_{2}
  ,\ell_{3}\colon S^{1}\to F$ be smooth functions with $\supp \ell_{i}\subset
  I_{i}$ and $\int_{0}^{2 \pi}\ell_{1}(\theta) \frac{d\theta}{2 \pi}=\int_{0}^{2
  \pi}\ell_{3}(\theta) \frac{d\theta}{2 \pi}$. Let $\rho_{i}=\rho_{\ell_i}$
  and recall that
  \begin{align*}
    \rho_{i}(W([f]))=e^{\textstyle\ima\int_{0}^{2 \pi} \langle f(\theta),  \ell_i(\theta) \rangle_F \frac{d\theta}{2\pi} }W([f])\,.
  \end{align*}

  Let $L$ be a smooth function with support in $I$ and $L'=\ell_{3}-\ell_{1}$.
  Then
  \begin{align}
    \label{sbraidgn}\rho_{3}\rho_{1}^{-1}= \rho_{\ell_3-\ell_1}= \Ad W([L])
  \end{align}
  on $\A(I)$. 
  Thus $W([L]) \in (\rho_{1},\rho_{3})$ and
  $\rho_{2}(W([L])^{\ast})W([L]) \in (\rho_{1}\rho_{2},\rho_{2}\rho_{1})$. 
  Compute the following:
  \begin{align}
    \label{cptbragn}
    \rho_{2}(W([-L]))W([L]) 
      & = \e^{\textstyle-\ima \int_I\langle\ell_2(\theta), L(\theta) \rangle_F\frac{d\theta}{2\pi}}W([L])^{\ast} W([L])                                                                                 \\
      & = \e^{\textstyle-\ima \int_I \langle \ell_2(\theta), L(\theta)\rangle_F\frac{d\theta}{2\pi}}\cdot 1_{\A_{F}(I)}                                                                                      \\
      & = \e^{\textstyle-\ima \int_{I_2}\langle \ell_2(\theta), L(\theta) \rangle_F\frac{d\theta}{2\pi}}\cdot 1_{\A_{F}(I)}                                                                                  \\
      & = \e^{\textstyle-\ima \int_{I_2} \langle \ell_2(\theta), \int_{-\infty}^\theta L'(\vartheta)\,d\vartheta \rangle_F \frac{d\theta}{2\pi}}\cdot 1_{\A_{F}(I)} \\
      & = \e^{\textstyle-\ima  \int_{I_2}\langle \ell_2(\theta),\int_{I_1}L'(\vartheta)\,d\vartheta \rangle_F \frac{d\theta}{2\pi}}\cdot 1_{\A_{F}(I)}\qquad\text{(support of $L'$)}                                \\
      & = \e^{\textstyle\ima  \int_{I_2}{\langle \ell_2(\theta),\int_{I_1}\ell_1(\vartheta)\,d\vartheta \rangle}_F\frac{d\theta}{2\pi}}\cdot 1_{\A_{F}(I)}                                                     \\
      & = \e^{\textstyle2\pi\ima\int_{I}\langle \ell_2(\theta),  \int_{I}\ell_1(\vartheta)\frac{d\vartheta}{2\pi} \rangle_{F}\frac{d\theta}{2\pi}}\cdot 1_{\A_{F}(I)}                                     \\
      &= \e^{\textstyle2\pi\ima\langle \int_I \ell_2(\theta)\frac{d\theta}{2\pi},  \int_{I}\ell_1(\vartheta)\frac{d\vartheta}{2\pi}\rangle_F}\cdot 1_{\A_{F}(I)}                                     \\
      & = \e^{\textstyle 2\pi\ima\langle q(\ell_1),q(\ell_2)\rangle_F}\cdot 1_{\A_{F}(I)}\,,
  \end{align}
  where
  \begin{align}
    \label{chargegn}q_{F}(\ell) = \int_{I}\ell(\theta)\frac{d\theta}{2\pi}\,.
  \end{align}
  Note that $\varepsilon(\rho_{1},\rho_{2}) = \rho_{2}(W([-L]))W([L])$ is the
  Fredenhagen--Rehren--Schroer braiding and
  $\varepsilon(\rho_{2}, \rho_{1}) = 1$, for instance look at \cite{GuLo1996}.
  More precisely, it is exactly the braiding formula in \cite{FrReSc1989} given by 
  \begin{align*}
   \varepsilon_{\rho_1, \rho_2}= \rho_2(U_1^{-1})U_2^{-1}U_1\rho_1(U_2) 
  \end{align*}
  with $U_1=W([L])$ and $U_2=1$.
  In particular, we have
  \begin{align*}
    \varepsilon(\rho_{\ell}, \rho_{\ell})^{2} = \varepsilon(\rho_{1}, \rho_{2})\varepsilon(\rho_{2},\rho_{1}) = e^{2\ima\pi q(\ell)^2}\,.
  \end{align*}
  
  Let us fix $\ell_{1} \colon S^1 \to \RR$ such that $\supp{\ell_1} \subset I_{1}$ with charge $q_{\RR}(\ell_1)$ and let us compute now
  the braiding between $\rho_{p\ell_1}$ and $\rho_{q\ell_1}$.

  Similarly, let $\ell_{2} \colon S^1 \to \RR$ be localized in $I_{2}$. Let $p\in F$. Let $L \colon S^1 \to \RR$ such that $L' = \ell_{2} - \ell_{1}$, then
  \begin{equation}
    \rho_{p\ell_2}\rho^{-1}_{p\ell_1}= \rho_{p(\ell_2-\ell_1)}= \Ad W([pL])
  \end{equation}
  and thus $W([pL]) \in (\rho_{p\ell_1}, \rho_{p\ell_2})$.

  Then
  $\rho_{q\ell_1}(W([pL])^{\ast}) W([pL])\in (\rho_{p\ell_1}\rho_{q\ell_1}, \rho
  _{q\ell_1}\rho_{p\ell_1})$ implies that it is an intertwiner in $(\rho_{p\ell_1}\rho_{q\ell_1}, \rho_{q\ell_1}\rho_{p \ell_1})$ and
 is indeed a braiding because it is a special case of \cite{FrReSc1989}.
  Consequently, we have
  \begin{align}\label{fnkygn}
    \rho_{q\ell_1}(W([-pL]))W([pL]) 
      & = \e^{\textstyle-\ima  \int_{I_1}\langle q \ell_1(\theta), pL(\theta) \rangle_F\frac{d\theta}{2\pi}}W([pL])^{\ast} W([pL])\\
      & = \e^{\textstyle-\ima  \int_{I_1} \ell_1(\theta) L(\theta) \langle p, q \rangle_F\frac{d\theta}{2\pi}}W([pL])^{\ast} W([pL])\\
      & = \e^{\textstyle\frac{1}{2}\ima \langle p, q\rangle_F\int_{I_1}\frac{d}{d\theta}L_1^2(\theta)\frac{d\theta}{2\pi}}\cdot 1_{\A_{F}(I)}\\
      & = \e^{\textstyle\ima \pi \langle p, q \rangle_F }\cdot 1_{\A_{F}(I)}
  \end{align}
  because $q_{\RR}(\ell_{1})=1$, where $L_{1}'=\ell_{1}$. 
\end{proof}
\subsection{Local extensions of tensor products of the
\texorpdfstring{$\U(1)$}{U(1)}-current net}
\label{ssec:Local extensions}
\begin{prop}\label{localextn}

Let $F$ be a fixed  $n$-dimensional Euclidean space and $L\subset F$ be an
  $n$-dimensional even lattice with a basis $(e_{1}, \ldots, e_{n})$.
  Let $\ell_{1}$ be a smooth real valued function on $\SOne$ with charge $1$ and $\ell_{q}:
  =\ell_{1} q$, where $q \in F$. 
  Assume $\ell_{1}$ is localized in $I$, \ie $\ell_{1}|_{I'}=0$ where $I'=\SOne \setminus{\bar{I}}$. 
 The action of $F$ on $\A_{F}(I)$ is given by $F \ni q \mapsto [\rho_{\ell_q}] \in \Out(\A_{F}(I))$.
   Choose the
  2-cocycle
  \begin{equation}
    b(e_{i}, e_{j}) =
    \begin{cases}
      (-1)^{\langle e_i , e_j\rangle_{F}} & \text{if}\quad i < j \\
      1                                   & \text{else}\,.
    \end{cases}
    \,.
  \end{equation}
  Then
  \begin{equation}
    \cB(I) = \A_{F}(I)\rtimes_{b} L
  \end{equation}
  gives a local extension.
\end{prop}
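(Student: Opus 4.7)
My plan is to show that the twisted crossed product $\cB(I) = \A_F(I) \rtimes_b L$ organizes into a consistent net $I \mapsto \cB(I)$ containing $\A_F$ as a subnet and satisfying locality. At each proper interval $I$, the algebra $\cB(I)$ is generated by $\iota(\A_F(I))$ together with unitary charged fields $\psi_1^{(I)}, \ldots, \psi_n^{(I)}$ implementing the localized automorphisms $\rho_{\ell_{e_1}^{(I)}}, \ldots, \rho_{\ell_{e_n}^{(I)}}$, with crossed-product relation $\psi_i^{(I)} \psi_j^{(I)} = b(e_i, e_j)\, \psi_{e_i+e_j}^{(I)}$. For $I \subset \tilde I$, I would make the embedding $\cB(I) \subset \cB(\tilde I)$ explicit by writing $\psi_i^{(I)} = \iota(v_i)\, \psi_i^{(\tilde I)}$ with $v_i \in \A_F(\tilde I)$ a charge transporter between the two localized representatives, which yields isotony.

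To verify locality I would fix disjoint intervals $I_1 < I_2$ inside a common interval $K$ and check three commutation relations inside $\cB(K)$. First, $\iota(\A_F(I_1))$ and $\iota(\A_F(I_2))$ commute by locality of $\A_F$. Second, $\iota(\A_F(I_1))$ commutes with each $\psi_j^{(I_2)}$: because $\rho_{\ell_{e_j}^{(I_2)}}$ is localized in $I_2$, Haag duality implies it fixes every $x \in \A_F(I_1) \subset \A_F(I_2')$, so $\psi_j^{(I_2)} \iota(x) \psi_j^{(I_2)*} = \iota(x)$. Third and crucially, $\psi_i^{(I_1)}$ commutes with $\psi_j^{(I_2)}$: moving one past the other in $\cB(K)$ introduces the Fredenhagen--Rehren--Schroer braiding which, by Theorem~\ref{braidndim}, equals $\varepsilon(\rho_{\ell_{e_i}^{(I_1)}}, \rho_{\ell_{e_j}^{(I_2)}}) = \e^{2\pi \ima \langle e_i, e_j \rangle_F}$. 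Since $L$ is even, polarization of $\langle x, x \rangle_F \in 2\ZZ$ forces $\langle e_i, e_j \rangle_F \in \ZZ$, so this phase equals $1$.

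The particular form of $b$ enters to make the single-interval multiplication consistent with the intrinsic self-braiding of the sectors: the asymmetry $b(e_i, e_j) b(e_j, e_i)^{-1} = (-1)^{\langle e_i, e_j \rangle_F}$ is dictated by the self-braiding formula $\varepsilon(\rho_{e_i}, \rho_{e_j}) = \e^{\ima \pi \langle e_i, e_j \rangle_F}$ from equation~\eqref{braidtu}. This is the Klein twist required for the crossed-product relation to be compatible with the sector braiding at a common localization; without it the relation $\psi_i \psi_j = b(e_i,e_j)\psi_{e_i+e_j}$ would not be internally consistent.

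The main obstacle is the bookkeeping in the third step: carefully relating $\psi_i^{(I_1)}$ and $\psi_j^{(I_2)}$ to a common reference via charge transporters and then tracking phases coming from the braiding, the cocycle, and the transporters without sign errors. The essential arithmetic input is evenness of $L$; without it $\langle e_i, e_j \rangle_F$ would be only half-integral, and no scalar cocycle $b$ could produce a local extension.
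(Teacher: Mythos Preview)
Your overall strategy matches the paper's: reduce locality to an exchange relation for the charged fields and verify that the cocycle $b$ provides exactly the phase needed to match the DHR braiding. However, your step (c) has a genuine bookkeeping gap. You assert that commuting $\psi_i^{(I_1)}$ past $\psi_j^{(I_2)}$ produces only the disjoint-interval braiding $\e^{2\pi\ima\langle e_i,e_j\rangle_F}=1$ from the first part of Theorem~\ref{braidndim}. But once you write $\psi_i^{(I_1)}=\iota(v_i)\psi_i^{(K)}$ and $\psi_j^{(I_2)}=\iota(w_j)\psi_j^{(K)}$ via charge transporters, the commutator of the localized fields reduces to the commutator of the \emph{reference} fields $\psi_i^{(K)},\psi_j^{(K)}$, and the latter is dictated by the crossed-product relation, namely $\psi_i^{(K)}\psi_j^{(K)} = b(e_i,e_j)b(e_j,e_i)^{-1}\,\psi_j^{(K)}\psi_i^{(K)}$. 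The transporter contribution is exactly the \emph{self}-braiding $\varepsilon(\rho_{e_i},\rho_{e_j})=\e^{\ima\pi\langle e_i,e_j\rangle_F}$ from equation~\eqref{braidtu} (both reference sectors sit in the same interval, so the second formula of Theorem~\ref{braidndim} applies, not the first). Locality of the extension is then equivalent to the identity $b(e_i,e_j)b(e_j,e_i)^{-1}=\varepsilon(\rho_{e_i},\rho_{e_j})=(-1)^{\langle e_i,e_j\rangle_F}$, which is precisely the property of $b$ you relegate to a separate ``internal consistency'' remark. In other words, the role of $b$ is \emph{not} an auxiliary consistency check on the single-interval multiplication; it \emph{is} the locality computation.

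The paper organizes this correctly: it computes directly $\psi_\ell\psi_m = b(\ell,m)b(m,\ell)^*\psi_m\psi_\ell = (-1)^{\langle q_\ell,q_m\rangle_F}\psi_m\psi_\ell$ using evenness of $L$, identifies this phase with $\varepsilon(\rho_\ell,\rho_m)$ via Theorem~\ref{braidndim}, and then invokes the general criterion that the extension is local if and only if $\psi_\rho\psi_\sigma=\iota(\varepsilon(\sigma,\rho))\psi_\sigma\psi_\rho$ (this is \cite[Theorem~6.8]{DeGi2018}, and is also proved directly later as Proposition~\ref{prop:diffreg}). Your charge-transporter computation, done carefully, is exactly the proof of that criterion; so your plan is not wrong, just mis-sequenced. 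To fix your write-up, merge the third paragraph into step (c) and use $\e^{\ima\pi\langle e_i,e_j\rangle_F}$ rather than $\e^{2\pi\ima\langle e_i,e_j\rangle_F}$ for the relevant braiding.
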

\begin{proof}
  Let $\cB(I)$ be the twisted crossed product with the cocycle $b$ given by unitaries
  $\{\psi_{\ell}\}_{\ell\in L}$ implementing $\rho_{\ell}$, \ie
  \begin{align}\label{exteqn}
    \psi_{\ell} \iota(a) & = \iota(\rho_{\ell}(a))\psi_{\ell} & \text{ for all }a\in \A_{F}(I) \,.
  \end{align}
  By the definition of the twisted crossed product and $b$, we have the following
  (anti-) commutation relations:
  \begin{align}\label{antirlns}
    \psi_{\ell}\psi_{m} & = b(\ell,m) \psi_{\ell+m}                                                         \\
                        & = b(m,\ell)^{\ast} b(\ell, m)\psi_{m}\psi_{\ell}                                  \\
                        & = (-1)^{\sum_{i,j} q_{m_i}q_{\ell_j}\langle e_i, e_j\rangle_F}\psi_{m}\psi_{\ell} \\
                        & = (-1)^{\langle q_m, q_\ell\rangle_F}\psi_{m}\psi_{\ell}
  \end{align}
  where in the last step, we use that
  $\langle e_{i}, e_{i}\rangle_{F} \in 2\ZZ$. 
  Since
  $\varepsilon(\rho_{\ell},\rho_{m}) = (-1)^{\langle q_m,q_\ell\rangle_F}$ by
  Theorem~\ref{braidndim}, we get
  \begin{align}
    \psi_{\ell}\psi_{m} & = \varepsilon(\rho_{\ell},\rho_{m})\psi_{m}\psi_{\ell}\,.
  \end{align}
  Thus the extension is local by \cite[Theorem~6.8]{DeGi2018}.
\end{proof}
\begin{rmk}
 In particular, for $n=1$ the extension of $\RR$--action is local which follows from Proposition~\ref{localextn}.
\end{rmk}
\subsection{$\alpha$-induction}
\label{ssec:AlphaInduction} Let $A=\A_{F}(I)$. 
Let $L$ be a free abelian group of rank $n$ equipped with $\ZZ$-basis $\{x_1, \ldots, x_n\}$.
Let $\rho$ be a faithful action on $\A \subseteq \B(\Hil)$ determined by the automorphism group $\{\rho_i:=\rho_{x_i}: \rho \in \Aut(\A)\}$.
Consider the Hilbert space $\tilde{\Hil}=\bigoplus_{x \in L} \Hil_{x}$.
Let $\rho_i=\Ad(U_{\rho_i})$ such that the unitary $U_{\rho_i} \colon \Hil_{x} \to \Hil_{x+ x_i}$ for each $i=1,\ldots,n$.
The projective  representation of $L$ is written as
\begin{align*}
L \ni \ell=\sum_{i=1}^{n} \ell_ix_i \to U_{\ell}=b(g)\cdot U_1^{x_1}\cdots  U_n^{x_n}\,,
\end{align*}
where $b\colon L \to \U(1)$ is determined by the twist $b$.
Assume
$\iota\colon A \to A\rtimes_{\rho, b}L$ is the inclusion map of $A$ into the crossed-product by $L$ (twisted by a cocycle $b$).
Note that the unitaries implement the automorphisms on the crossed-product Hibert space, namely
\begin{align*}
  U_{\rho_i}\iota(a) = \iota(\rho_i(a))U_{\rho_i}\qquad \text{for all }a\in A
\end{align*}
and $B=\langle\iota(A),U_{\rho_i} ~:~ i=1,\ldots,n\rangle$.

We are interested on extending endomorphisms on $A$ to $B$.
Let $\rho$, $\sigma$ and $\tau$ be in $\End(A)$. 
The $\alpha$-induction
is determined by
\begin{align*}
  \alpha_{\sigma}(\iota(a)U_{\rho}) = \iota(\sigma(a))\varepsilon(\sigma,\rho)^{\ast} )U_{\rho}
\end{align*}
where $\{\varepsilon(\rho,\sigma)\}_{\rho,\sigma}$ is the braiding. 
Then we
have (write $\rho\sigma:=\rho\circ\sigma$)
\begin{align*}
  \alpha_{\sigma\tau}(U_{\rho}) = \iota(\varepsilon(\sigma\tau,\rho)^{\ast}) U_{\rho}
\end{align*}
and
\begin{align*}
  \alpha_{\sigma}\alpha_{\tau}(U_{\rho})=\alpha_{\sigma}(\iota(\varepsilon(\tau,\rho)^{\ast}) U_{\rho}) = \iota(\sigma(\varepsilon(\tau,\rho)^{\ast})\varepsilon(\sigma,\tau)^{\ast})U_{\rho}\,.
\end{align*}
From the property of the braiding, we get
\begin{align*}
  \varepsilon(\sigma\tau,\rho) = \varepsilon(\sigma,\rho)\sigma(\varepsilon(\tau,\rho))\,.
\end{align*}
It follows that
\begin{align*}
  \alpha_{\sigma\tau}=\alpha_{\sigma}\alpha_{\tau}\,.
\end{align*}
\subsection{Extension of the automorphism to the crossed product by braiding}
\label{ssec:ExtendAutomorphismViaBraiding}
 In general, for an $n$-dimensional
Euclidean space, the extension of $\sigma \in \Aut({\A_F(I)})$ ($\sigma$ is a \emph{DHR endomorphism}) to the twisted crossed
product is $\beta_{\sigma} \in \Out(\A_{F}(I) \rtimes_{b}L)$ defined as
\begin{align}
  \label{eq:GeneralExtension}\beta_{\sigma}(\iota(a)\psi_{\rho_t})=\iota(\sigma(a) \varepsilon(\sigma, \rho_t)^{\ast}) \psi_{\rho_t}\,,\qquad t \in L
\end{align}
that arises from $\alpha$-induction. 
Here $\varepsilon$ is the braiding in
Theorem~\ref{braidndim} and 
$\A_{F}(I) \rtimes_{b}L:=\langle\{\iota(a), \psi_{\rho_t}: a \in \A_F(I), t \in L\}\rangle$, where $\rho \colon F \ni t \mapsto \rho_t \in  \Out(\A_F(I))$ is a homomorphism.

\begin{prop}
  \label{Prop:Kernel} Consider a fixed proper interval $I \subset \SOne$ and let $\A_F(I)$ be the von Neumann algebra defined in Subsection ~\ref{ssec:LocalNetsOfStandardSubspaces}. 
  Let
  $\rho \colon F \rightarrow \Out(\A_F(I))$ given by $\rho(t)=\rho_{t \cdot \ell_0}$, where $\rho_{\ell_0}$ is defined in the Equation~\eqref{locaut}.
  Here we write $\rho_{t \cdot \ell_0}:=\rho_t$ for $t \in F$ by abuse of notation.
  Let $\beta
  \colon F \ni t \rightarrow \beta_{\rho_t} \in \Aut(\A_F(I) \rtimes_{b}L)$ be the map that arises from $\alpha$-induction and implemented from the braiding on $\A_F(I)$ 
on the twisted crossed product $\A_F(I) \rtimes_{b}L$ by a
  cocycle $b$ and the map $\beta$ is given by
  \begin{align*}
    \beta_{\rho_{t}}(\iota(a) \psi_{\rho_{\ell}})=\iota(\rho_{t}(a) \varepsilon(\rho_{t}, \rho_{\ell})^{\ast})\psi_{\rho_{\ell}}\,, \qquad t \in F, \ell \in L
  \end{align*}
  where $\iota \colon \A_F(I) \to \A_F(I) \rtimes_b L$ is the canonical embedding and $\{\psi_{\rho_{\ell}}: \ell \in L\}$ are the unitaries implementing the crossed product .
  Then $\ker(\beta_{\rho_{\ell}}) = L$.
\end{prop}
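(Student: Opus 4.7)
The plan is to prove $\ker\beta = L$ as two inclusions, where I interpret $\ker\beta = \{t\in F : \beta_{\rho_t}\text{ is inner on }B\}$ with $B:=\A_F(I)\rtimes_b L$. For the easy inclusion $L\subseteq\ker\beta$, I would compute $\Ad\psi_\ell$ directly on a generator $\iota(a)\psi_m$: applying $\psi_\ell\iota(a) = \iota(\rho_\ell(a))\psi_\ell$ and the commutation $\psi_\ell\psi_m = \varepsilon(\rho_\ell,\rho_m)\psi_m\psi_\ell$ from Proposition~\ref{localextn} yields $\Ad\psi_\ell(\iota(a)\psi_m) = \varepsilon(\rho_\ell,\rho_m)\,\iota(\rho_\ell(a))\,\psi_m$, which agrees with the defining formula $\beta_{\rho_\ell}(\iota(a)\psi_m) = \varepsilon(\rho_\ell,\rho_m)^\ast\iota(\rho_\ell(a))\psi_m$ because $\varepsilon(\rho_\ell,\rho_m) = (-1)^{\langle \ell,m\rangle_F}$ is real by Theorem~\ref{braidndim} and the evenness of $L$. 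Thus $\beta_{\rho_\ell} = \Ad\psi_\ell$ is inner on $B$ for every $\ell\in L$.

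For the reverse inclusion, I would assume $\beta_{\rho_t} = \Ad u$ for some $t\in F$ and some $u\in\U(B)$, and exploit the Pontryagin dual action $\gamma\colon\widehat L\to\Aut(B)$ with $\gamma_\chi(\iota(a)) = \iota(a)$ and $\gamma_\chi(\psi_\ell) = \chi(\ell)\psi_\ell$, whose fixed-point algebra is $\iota(\A_F(I))$ and whose acting group, using the inner product on $F$, is identified with $\widehat L \cong F/L^\ast = T_{L^\ast}$. A direct check on the defining formula gives $\gamma_\chi\beta_{\rho_t} = \beta_{\rho_t}\gamma_\chi$ for every $\chi$. Since the $L$-action on $\A_F(I)$ is outer by Theorem~\ref{equivlsamecharge}, the twisted crossed product $B$ is a factor, so $\gamma_\chi(u)u^\ast\in\CC\cdot 1$ for every $\chi$; the resulting character $\widehat L\to\TT$ lies in $\widehat{\widehat L}\cong L$ and hence is of the form $\chi\mapsto\chi(s)$ for a unique $s\in L$. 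A Fourier decomposition along the $\widehat L$-spectral subspaces of $B$, each of which has the form $\iota(\A_F(I))\psi_\ell$, then forces $u = \iota(u_s)\psi_s$ for some unitary $u_s\in\A_F(I)$.

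Substituting this shape of $u$ into $u\iota(a)u^\ast = \iota(\rho_t(a))$ and simplifying via $\psi_s\iota(a)\psi_s^\ast = \iota(\rho_s(a))$ yields $u_s\rho_s(a)u_s^\ast = \rho_t(a)$ for all $a\in\A_F(I)$, so the automorphism $\rho_{t-s}$ is implemented on $\A_F(I)$ by the inner unitary $\rho_s^{-1}(u_s)$. Applying the natural $F$-valued analogue of Theorem~\ref{equivlsamecharge} -- obtained by pairing the one-dimensional statement against each coordinate of an orthonormal basis of $F$ -- this innerness forces the charge $q(\ell_{t-s}) = t - s$ to vanish in $F$, and hence $t = s\in L$. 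I expect the main technical subtlety to lie in the two ingredients that the dual-action argument uses implicitly, namely factoriality of $B$ (which follows from outerness of the $L$-action but deserves to be recorded explicitly) and the $F$-valued generalisation of Theorem~\ref{equivlsamecharge}; once both are in place, the remainder of the argument is essentially bookkeeping.
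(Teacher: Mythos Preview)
Your proposal is correct. The inclusion $L\subseteq\ker\beta$ is established exactly as in the paper, by computing $\Ad\psi_\ell$ on a generic generator and matching it with the $\alpha$-induction formula.

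For the reverse inclusion your route genuinely differs from the paper's. The paper simply notes that if $\beta_{\rho_t}=\Ad U$ then restricting to $\iota(\A_F(I))$ gives $U\in(\iota,\iota\rho_t)$, and then invokes the Galois correspondence of Izumi--Longo--Popa \cite{IzLoPo1998} to conclude $t\in L$; this is a one-line appeal to a substantial external result. You instead stay inside the paper's own toolkit: the dual $\widehat L$-action commutes with $\beta_{\rho_t}$, factoriality of $B$ (from outerness of the $L$-action) forces $u$ into a single spectral subspace $\iota(\A_F(I))\psi_s$ with $s\in L$, and the problem reduces to innerness of $\rho_{t-s}$ on $\A_F(I)$, which is handled by the $F$-valued version of Theorem~\ref{equivlsamecharge}. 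Your approach is longer but more self-contained, and it makes the role of the dual torus action transparent---a natural fit for the lattice setting. Two small remarks: the implementing unitary for $\rho_{t-s}$ is $u_s$ itself rather than $\rho_s^{-1}(u_s)$ (from $u_s\rho_s(a)u_s^\ast=\rho_t(a)$ one gets $\Ad u_s=\rho_{t-s}$ directly), and the $F$-valued upgrade of Theorem~\ref{equivlsamecharge} indeed follows at once by pairing with an orthonormal basis of $F$, so neither point is a genuine obstacle.
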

\begin{proof}
Let ${\B}_F(I):=\A_F(I) \rtimes_{b} L$.
  By Theorem~\ref{extcptw}, we have $\beta\in \Aut({\B}_F(I))$ as an extension of
  $\rho\in\Aut(\A_F(I))$.
   Consequently, we have
  \begin{align*}
    \beta_{\rho_{\ell}}\circ\iota =\iota\circ\rho_{\ell}, \qquad \ell \in L\,.
  \end{align*}
 Note that
  \begin{align*}
    \beta_{\rho_{\ell}}(\iota(a)\psi_{\rho_m}) & =\iota(\rho_{\ell}(a))\iota(\varepsilon(\rho_{\ell}, \rho_{m})^{*}) \psi_{\rho_m}                                                               \\
                                        & =\iota(\rho_{\ell}(a))\iota((\varepsilon(\rho_{\ell}, \rho_{m}))^{*}\psi_{\rho_m}                                                               \\
                                        & =\psi_{\rho_\ell}\iota(a)\psi_{\rho_\ell}^{*}\iota((\varepsilon(\rho_{\ell}, \rho_{m}))^{*}\psi_{\rho_m}                                        \\
                                        & =\psi_{\rho_\ell}\iota(a)\psi_{\rho_\ell}^{*}\iota((\varepsilon(\rho_{\ell}, \rho_{m}))^{*}\psi_{\rho_m}\psi_{\rho_\ell}\psi_{\rho_\ell}^{\ast} \\
                                        & =\psi_{\rho_\ell}\iota(a)\psi_{\rho_\ell}^{*}\psi_{\rho_\ell}\psi_{\rho_m}\psi_{\rho_\ell}^{*}\qquad[\text{by braiding property}]               \\
                                        & =\psi_{\rho_\ell}\iota(a)\psi_{\rho_m}\psi_{\rho_\ell}^{*}                                                                                      \\
                                        & =\Ad(\psi_{\rho_\ell})(\iota(a)\psi_{\rho_m})\,.
  \end{align*}
  Therefore $\beta_{\rho_{\ell}} = \Ad \psi_{\rho_{\ell}}$, which entails that $L\subseteq \ker(\beta_{\rho_{\ell}})$. Conversely, if $\ell \in \ker(\beta_{\rho_{\ell}})$ then $\beta_{\rho_{\ell}} =\Ad U$ for some
  $U\in \A_{F}(I)\rtimes_{b} L$. Thus
  $\iota\rho_{\ell} = \beta_{\rho_{\ell}}\iota = \Ad U \circ \iota$ and consequently
  $U\in (\iota,\iota\rho_{\ell})$ implying $\ell \in L$ by \cite{IzLoPo1998}.
\end{proof}

\begin{cor}
 Under the same hypothesis as Proposition~\ref{Prop:Kernel}, if $F=\RR$ with a trivial twist (ordinary crossed product),  
  Then $\ker(\alpha)=L$, where $\alpha$ is the $\alpha$-induction associated to $\iota \colon \A_{\RR}(I) \to \A_{\RR}(I) \rtimes L$.
\end{cor}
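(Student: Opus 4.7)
The plan is to deduce this as a direct specialization of Proposition~\ref{Prop:Kernel} to $F=\RR$, with a brief verification that in rank one the choice of cocycle $b$ used there is in fact trivial, so that the twisted crossed product in the proposition coincides with the ordinary crossed product appearing in the statement.

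First, I would fix $F=\RR$ and a rank-one even lattice $L\subset\RR$, so $L=c\ZZ$ with $c^{2}\in 2\ZZ$, and take $\{e_{1}\}=\{c\}$ as the $\ZZ$-basis required by Proposition~\ref{localextn}. The cocycle $b\colon L\times L\to\{\pm 1\}$ defined there is the $\ZZ$-bilinear extension of $b(e_{i},e_{j})=(-1)^{\langle e_{i},e_{j}\rangle_{F}}$ for $i<j$ and $b(e_{i},e_{j})=1$ otherwise; with a single basis vector the condition $i<j$ is never met, so $b(e_{1},e_{1})=1$ and hence $b\equiv 1$ on $L\times L$ by bilinearity. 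Consequently $\A_{\RR}(I)\rtimes_{b}L=\A_{\RR}(I)\rtimes L$, and the $\alpha$-induction homomorphism of Subsection~\ref{ssec:ExtendAutomorphismViaBraiding} specializes to the map $\alpha\colon \RR\to\Aut(\A_{\RR}(I)\rtimes L)$ appearing in the corollary, defined on generators by $\alpha_{t}(\iota(a)\psi_{\rho_{\ell}})=\iota(\rho_{t}(a)\varepsilon(\rho_{t},\rho_{\ell})^{\ast})\psi_{\rho_{\ell}}$ with the braiding furnished by Proposition~\ref{prop:BraidingFormula}.

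Under this identification, the hypotheses of Proposition~\ref{Prop:Kernel} are satisfied verbatim, and its conclusion $\ker(\beta_{\rho_{\ell}})=L$ becomes $\ker(\alpha)=L$. The argument that is being invoked has two halves: the inclusion $L\subseteq\ker(\alpha)$ follows from the same direct computation done in the proof of Proposition~\ref{Prop:Kernel}, namely $\alpha_{t}=\Ad(\psi_{\rho_{t}})$ whenever $t\in L$, which uses only the crossed-product commutation relation \eqref{exteqn} and the braiding formula \eqref{selfbraid}; the reverse inclusion uses that any unitary implementing $\iota\circ\rho_{t}$ must lie in the intertwiner space $(\iota,\iota\rho_{t})$, forcing $t\in L$ by the result of Izumi–Longo–Popa cited there.

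The only step requiring any thought is the first one, the triviality of $b$ in rank one; everything else is a literal rereading of the preceding proposition. There is no genuine obstacle, since the 1-dimensional case is strictly simpler than the $n$-dimensional case already handled.
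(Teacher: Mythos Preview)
Your proposal is correct and follows the same approach as the paper, which simply states that the result follows from the argument of Proposition~\ref{Prop:Kernel} with a trivial twist. Your additional verification that the cocycle $b$ of Proposition~\ref{localextn} is automatically trivial in rank one is a helpful clarification not spelled out in the paper, but the overall logic is identical.
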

\begin{proof}
  The result follows from the same argument as in Proposition~\ref{Prop:Kernel}
  with a trivial twist.
\end{proof}

\section{Computation of anomalies}
\label{sec:Anomalies}
\subsection{Computing the anomaly of the \texorpdfstring{$R/L$}{R/L}-kernel}
\label{ssec:ComputingAnomaly}
We follow Jones' idea to compute an explicit formula for the three cocycle.

Let $B$ be a von Neumann algebra which is a factor with an outer action $\alpha$
of $\RR$ and assume $L=\gamma\ZZ = \ker(\alpha)$ for some $\gamma = \sqrt{2 m}$ 
for some $m\in\NN$.

Then we get a $\RR/L$--kernel given by
$\RR/L\ni x+L\mapsto [\alpha_{x}] \in \Out(B)$. 
Let us choose a $\Borel$ lift $s\colon
\RR/L\to \RR$, for example take the map defined by
$\sigma(x+L) = x$ for $x \in [0,\gamma)$.
For $q \in \RR/L$,
define $\tilde\alpha_{q}=\alpha_{s(q)}$. 
Then we have the inner equivalence 
\begin{align}
  \label{eq:JonesMethod}\tilde\alpha_{p}\tilde\alpha_{q} & =\alpha_{s(p)+s(q)}\sim \alpha_{s(p+q)}= \tilde\alpha_{p+q}
\end{align}
and we can choose a family of unitaries $\{u(p,q)\}_{p,q \in\RR/L}$ such that
\begin{align}
  \label{eq:TwoCocycleEquation}\tilde\alpha_{p}\tilde\alpha_{q} & = u(p,q)\tilde\alpha_{p+q}\,.
\end{align}
Namely, we have
\begin{align}
  \label{eq:OuterEquation}\tilde\alpha_{[p]}\tilde\alpha_{[q]} & =\alpha_{s([p])}\alpha_{s([q])}=\alpha_{s([p])+s([q])-s([p+q])}\tilde\alpha_{[p+q]}\,.
\end{align}
Since $h \in L$, there is a unitary $v_{h}\in B$ such that $\alpha_{h}=\Ad v_{h}$.
As $s([p])+s([q])-s([p+q]) \in L$ and $\alpha_{t}$ is inner for $t\in L$,
there is a unitary $v_{s([p])+s([q])-s([p+q])}$ such that $\alpha_{s([p])+s([q])-s([p+q])}
=\Ad(v_{s([p])+s([q])-s([p+q])})$. 
Thus, we can choose the $2$-cocycle
\begin{align}
  \label{eq:TwoCycleFormula}u([p], [q]) & =v_{s([p])+s([q])-s([p+q])}\,.
\end{align}

For $h, k\in L$, we have $v_{h}v_{k}=\mu(h,k)v_{hk}$ for some map $\mu\colon L
\times L \rightarrow \U(1)$.
Since $L\leq \RR$ is a normal subgroup,
we get 
\begin{align*}
\alpha_{h}=\Ad(\alpha_{g}(v_{g^{-1}hg}
 ))\,.
 \end{align*} 
Thus, there exists a map $\lambda\colon\RR\times L \rightarrow \U(1)$
 given by
\begin{align*}
  \alpha_{g}(v_{g^{-1}hg})=\lambda(g, h)v_{h}
\end{align*}
for $g\in \mathbb{R}$ and $h \in L$. 
In particular, we have
\begin{align*}
 \alpha_{g}(v_{h})=\lambda(g, h)v_{h}
\end{align*}
for $g\in \mathbb{R}$ and $h \in L$. 
Now the $3$-cocycle is given by
\begin{align}\label{eq:JonesFormula} 
   \nonumber\gamma(q, r, t)= & \mu(s(q)+s(r)-s(q+r), s(q+r)+s(t)-s(q+r+t)) \times       \\
              \nonumber  & \mu(s(r)+s(t)-s(r+t),s(q)+s(r+t)-s(q+r+t))^{\ast} \times \\
     & \lambda(s(q), s(r)+s(t)-s(r+t))
\end{align}
for any $q,r,t\in \RR/L$.
\begin{rmk}
Consider any even lattice $L \subset F$ of the $n$-dimensional Euclidean space $F$.
Then, the Equation~\eqref{eq:JonesFormula} still holds.
\end{rmk}
\subsection{Warm up: one-dimensional torus actions}
\label{ssec:WarmUpOneDimensionalTorus} Let $L\subseteq \RR$ be a one-dimensional
lattice, \ie $L =\sqrt{2m}\ZZ$ for some $m \in \NN$. 
We are computing the three cohomology
class for the obstruction of the $\RR/L$--kernel arising from the $\alpha$-induction
for the extension $\A_{\RR} \subset \A_{L}$:
\begin{align}\label{indextw}
  \theta & \colon[t] \longmapsto [\alpha_{t}]\,, & [t]\in \RR/L \cong \U(1)\,.
\end{align}
We choose a reference interval $I_{0}$ and $\{\rho_{q}\}_{q\in
L}$ localized automorphisms in $I_{0}$.
The local net associated to the lattice $L$ is defined by
\[
  \A_{L}(I_{0}) := \A_{\RR}(I_{0})\rtimes L = \langle \iota(\A_{\RR}(I_{0})) ,\psi
  _{q} : q\in L \rangle
\]
where $\psi_{q} \in (\iota,\iota\rho_{q})$ and
\[
  \A_{L}(I) := \langle \iota(\A_{\RR}(I)), \iota(z_{q,I_0,I})\psi_{q} : q\in L\rangle
\]
with unitaries $z_{q,I_0,I}\in (\rho_{q},\rho^{I}_{q})$ and $\rho^{I}_{q}$ is
localized in $I$.
With slight abuse of notation, we write $\rho$ for $\rho_{I}$.
Denote $\iota\colon \A_{\RR}(I)\to\A_L(I)$ the canonical inclusion and 
by $\{\psi_\ell\}_{\ell\in L}$ the unitaries implementing the crossed product.
In particular, we have 
$\psi_\ell\iota(x) = \iota(\rho_t(x))\psi_\ell$ for all $x\in \A_{\RR}(I)$.
There is a map $\lambda\colon \RR\times L \to \U(1)$ defined by
\begin{align}
  \label{eq:AlphaInduction}\lambda^{-}(t, \ell)\psi_{\rho_\ell} & := \alpha^{+}_{t}(\psi_{\rho_\ell}) = \iota\left(\varepsilon(\rho_{t},\rho_{\ell})^{\ast}\right)\psi_{\rho_\ell} \\
  \lambda^{+}(t, \ell)\psi_{\rho_\ell}                          & := \alpha^{-}_{t}(\psi_{\rho_\ell}) = \iota\left(\varepsilon(\rho_{\ell},\rho_{t})\right)\psi_{\rho_\ell}
\end{align}
for $\alpha^{\pm}$-induction, where $\varepsilon$ is the associated braiding on $\A_{\RR}$. 
To simplify the notation above, we use
$\alpha^{-}$-induction and call $\lambda = \lambda^{+}$. 
Thus, we have
\begin{align*}
  \lambda(t,\ell) = \iota(\varepsilon(\rho_{\ell},\rho_{t})) = \e^{\ima\pi q_tq_{\ell}}
\end{align*}
given by Proposition~\ref{prop:BraidingFormula}, where $q_t$ and $q_{\ell}$ are charges of $\rho_t$ and $\rho_{\ell}$, respectively.
Let us choose a Borel section $s_{m}$
of the quotient map $\RR\to \RR/\sqrt{2m}\ZZ$ by asking $s(x) \in [0,\sqrt{2m})$.
With this we choose a lift of $\theta$ given by
\begin{align}
  \tilde\alpha & \colon p \longmapsto \alpha_{s(p)}\,, & p \in \RR/L\,.
\end{align}
Then, we get
\begin{align}
  u(p,q)        & = v_{m}^{s_m(p)+s_m(q)-s_m(p+q)}                                               \\
  \omega(p,q,r) & = v_{m}^{-s_m(q)-s_m(r)+s_m(q+r)}\alpha_{s(p)}(v_{m}^{s_m(q)+s_m(r)-s_m(q+r)}) \\
  \omega(p,q,r) & = \e^{\mp i\pi s_m(p)(s_m(q)+s_m(r)-s_m(q+r))}\,.
\end{align}
Identifying $\U(1)\cong\RR/\ZZ\to \RR/\sqrt{2m}\ZZ$ given by $[x] \mapsto [\sqrt{2m}
x]$, we get
\begin{align}
  \omega_{\mp m}(x,y,z) & = \e^{\mp 2 \ima \pi m s(x)(s(y)+s(z)-s(y+z))} & x,y,z \in\RR/\ZZ, ~s(x)\in [0,1)
\end{align}
and in particular $\omega_{m}\omega_{n} = \omega_{m+n}$. Thus, we have proven:
      \begin{thm}
   \label{thm:CocycleOneDimensional} Let $m\in \NN$. 
   Consider the one-dimensional
   lattice $L_{m}=\sqrt{2m}\ZZ\subset \RR$ and the associated conformal net
   $\A_{L_{m}}:=\A_{\RR}(I)\rtimes_{\rho} L_m$, where $\A=\{\A_{\RR}(I): I \in \cI\}$ is the $\U(1)$-current net and the action of $\RR$ on $\A$ is given by $\rho$ in the Subsection~\ref{ssec: R action}.
   Then the $\RR$-action coming from $\alpha^{\pm}$-induction from $\A$
   onto $\A_{L_{m}}$ gives a $\RR/L_{m}$--kernel with obstruction realized by the cocycle
   \begin{align}
     \label{eq:ThreeCycle}
     \omega_{m}(x,y,z) & = \e^{\mp \pi \ima m s(x)(s(y)+s(z)-s(y+z))} & x,y,z \in\RR/\ZZ, ~s(x)\in [0,1)
   \end{align}
   by naturally identifying $\RR/L_{m} \cong \RR/\ZZ$.
    Here $s\colon \RR/\ZZ \to \RR$ is the Borel section given by
   \begin{align*}
   s(x+\ZZ)&=x & x \in [0,1)\,.
   \end{align*}
 \end{thm}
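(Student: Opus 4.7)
The plan is to apply the general three-cocycle formula \eqref{eq:JonesFormula} derived from Jones' method to the specific setting of the one-dimensional lattice $L_m=\sqrt{2m}\ZZ$. The hypotheses for that formula --- an outer action of $\RR$ on a factor with kernel $L_m$ inducing a $\RR/L_m$-kernel --- are satisfied by $B=\A_{L_m}(I)$ and $\alpha_t=\beta_{\rho_t}$, where $\beta$ is the extension of the gauge automorphism obtained from $\alpha^\pm$-induction as in \eqref{eq:GeneralExtension}; Proposition~\ref{Prop:Kernel} is exactly the statement $\ker\beta=L_m$, so the map $[t]\mapsto[\beta_{\rho_t}]$ is a $\RR/L_m$-kernel.

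First I compute the cocycle $\mu$ of the crossed product. In dimension one, $L_m$ has the single generator $e_1=\sqrt{2m}$, and by the convention of Proposition~\ref{localextn} we have $b(e_1,e_1)=1$; consequently the implementing unitaries satisfy $\psi_\ell\psi_{\ell'}=\psi_{\ell+\ell'}$, so $\mu\equiv 1$. Next I identify $\lambda$: by \eqref{eq:AlphaInduction} the action of $\beta_{\rho_t}$ on the implementer $\psi_{\rho_\ell}$ is multiplication by the braiding, and Proposition~\ref{prop:BraidingFormula}, in the form \eqref{selfbraid}, gives
\begin{align*}
  \lambda(t,\ell)=\iota(\varepsilon(\rho_\ell,\rho_t))=\e^{\ima\pi t\ell}\,.
\end{align*}

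With $\mu$ trivial, the formula \eqref{eq:JonesFormula} collapses to its $\lambda$-factor. Choose the Borel section $s_m\colon\RR/L_m\to\RR$ with $s_m(p)\in[0,\sqrt{2m})$; since $s_m(q)+s_m(r)-s_m(q+r)\in L_m$, direct substitution yields
\begin{align*}
  \omega(p,q,r)=\e^{\mp\ima\pi\, s_m(p)(s_m(q)+s_m(r)-s_m(q+r))}\,,
\end{align*}
the sign being determined by the choice of $\alpha^\pm$-induction. Finally, pulling back along the isomorphism $\RR/\ZZ\xrightarrow{\sim}\RR/L_m$, $[x]\mapsto[\sqrt{2m}\,x]$, converts the two $s_m$-values into $\sqrt{2m}\cdot s(x)$, etc., and the formula of the theorem statement emerges.

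The main obstacle is bookkeeping rather than conceptual: tracking consistently the two different normalizations (the section $s_m$ into $[0,\sqrt{2m})$ versus the standard section $s$ into $[0,1)$), the sign coming from the choice of $\alpha^+$ versus $\alpha^-$-induction, and the rescaling factor arising under the identification $\RR/\ZZ\cong\RR/L_m$. A side check is that $\omega_m\cdot\omega_n=\omega_{m+n}$, which is immediate from the explicit formula and matches the expected additive structure on $H^3_{\Borel}(\U(1),\U(1))$.
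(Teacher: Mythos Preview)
Your proposal is correct and follows essentially the same approach as the paper: both apply the Jones formula \eqref{eq:JonesFormula}, observe that in the one-dimensional case the twist $b$ (and hence $\mu$) is trivial so only the $\lambda$-term survives, compute $\lambda$ from the braiding via \eqref{eq:AlphaInduction} and Proposition~\ref{prop:BraidingFormula}, and then rescale under $\RR/\ZZ\cong\RR/\sqrt{2m}\ZZ$. You are somewhat more explicit than the paper in spelling out why $\mu\equiv 1$ and in invoking Proposition~\ref{Prop:Kernel} to justify that the $\alpha$-induction really yields a $\RR/L_m$--kernel, but the line of argument is the same.
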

\begin{rmk}
  Recall that isomorphism classes of even positive lattices of rank one are $\{
  [L_{m}]\}_{m\in{\ZZ}}$ where
  \[
    L_{m} = \sqrt{2m}\ZZ\subseteq\RR
  \]
  equipped with the dot product of $\RR$. 
  The extension $\A_{L_m}\supseteq \A$ is also
  called the Buchholz--Mack--Todorov extension (BMT) $\A_{m}$ of the $\U(1)$-current
  net $\A$.
\end{rmk}
\begin{cor}
  For $m\in\NN$, consider the BMT-extension $\A_{m}=\A_{\sqrt{\ell}\ZZ}\supseteq
  \A$ of the $\U(1)$-current net $\A$ at level $\ell=2m$.

  There is $\U(1)$--kernel of solitons with obstruction as above where $\U(1)$
  is naturally identified with $\RR/L_{m}$ via the map
  $\e^{2\pi \ima t}\mapsto \sqrt{2m}t +L_{m}$. 
\end{cor}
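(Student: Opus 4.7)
The plan is to deduce this essentially directly from \rthm{CocycleOneDimensional} by transferring all structures along the explicit group isomorphism $\phi\colon \U(1)\xrightarrow{\sim}\RR/L_m$ given by $\e^{2\pi\ima t}\mapsto \sqrt{2m}t + L_m$. First I would note that $\phi$ is a continuous (in fact Borel) isomorphism of Polish groups, so composing $\phi$ with the $\RR/L_m$--kernel $\theta\colon \RR/L_m\to\Out(\A_{L_m}(I_0))$ produced by \rthm{CocycleOneDimensional} gives a monomorphism $\tilde\theta := \theta\circ\phi\colon \U(1)\to\Out(\A_m(I_0))$. A Borel lift of $\tilde\theta$ is obtained by composing $\phi$ with the Borel lift $\tilde\alpha$ of $\theta$ constructed via the section $s$, so $\tilde\theta$ is indeed a $\U(1)$--kernel in the sense of the definition in Section~\ref{ssec:GKernel}.

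Next I would address the word ``solitons''. The automorphisms obtained by $\alpha$-induction from the $\RR$-action $t\mapsto\rho_{t\ell_0}$ live in $\Aut(\A_m(I_0))$ and are localized in $I_0$ by construction. However, the family $\{[\rho_{t\ell_0}]\}_{t\in\RR}$ of \emph{DHR} endomorphisms of the subnet $\A=\A_\RR$ descends, after $\alpha$-induction, to sectors of the extended net $\A_m$ whose DHR sectors are parametrized by the finite discriminant group $L_m^\ast/L_m\cong\ZZ/2m\ZZ$ (by Dong--Xu). Since $\U(1)\cong\RR/L_m$ is continuous and only the countable subgroup corresponding to $L_m^\ast/L_m$ yields true DHR extensions, generic values of $t$ produce endomorphisms that are localized on a proper interval of $\SOne$ but fail to be globally DHR; such sectors are exactly the soliton sectors in the standard terminology. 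Thus the $\U(1)$--kernel $\tilde\theta$ is implemented by solitons of $\A_m$.

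Finally, the obstruction of $\tilde\theta$ is computed as the pull-back of the three-cocycle $\omega_m$ along $\phi$ at the level of Borel cohomology. Since the construction in Section~\ref{ssec:ComputingAnomaly} of the cocycle from a Borel lift is natural under group isomorphisms, the class $[\tilde\omega_m]\in H^3_{\Borel}(\U(1),\U(1))$ is represented by
\begin{align*}
  \tilde\omega_m(x,y,z) = \omega_m(\phi(x),\phi(y),\phi(z))
     = \e^{\mp \pi\ima m\, \tilde s(x)(\tilde s(y)+\tilde s(z)-\tilde s(y+z))}
\end{align*}
for $x,y,z\in\RR/\ZZ$ with the Borel section $\tilde s(x+\ZZ)=x\in[0,1)$, which is exactly the obstruction ``as above.''

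The only substantive thing to check beyond translation is the identification of the induced sectors as solitons; this is the expected place for an obstacle, but it follows from the general fact that $\alpha$-induction applied to a DHR endomorphism of the subnet yields a DHR sector of the extension precisely when the original charge lies in the dual lattice modulo the lattice, and a soliton otherwise, combined with Dong--Xu's description of the DHR category of $\A_m$.
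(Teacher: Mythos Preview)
Your proposal is correct and is essentially an explicit unpacking of what the paper treats as an immediate corollary of \rthm{CocycleOneDimensional}: the paper gives no separate proof, since the identification $\U(1)\cong\RR/L_m$ and the transport of the cocycle are taken as evident. Your discussion of why the induced sectors are solitons (DHR precisely on $L_m^\ast/L_m$, solitonic otherwise) is a welcome elaboration that the paper only makes precise later in Proposition~\ref{prop:Mueger}.
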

\begin{rmk}
  The cyclic subgroup $C_{\ell}\leq \U(1)$ generated by a prime root of unity of
  order $\ell$ corresponds to $L^{\ast}_{m}/L_{m}\leq \RR/L_{m}$ and gives exactly
  the localized sectors of $\A_{m}$.
\end{rmk}

\begin{example}
  Let $L=\sqrt{2}\ZZ$ and $q = \sqrt{2}\left(\frac{1}{2}+\ZZ\right)$. Note
  that $L^{\ast} = L \oplus q= \frac{2}{\sqrt{2}}\ZZ$ is the dual lattice of
  $L$. 
  Define the section map by
  \begin{align*}
   s \colon \RR/\sqrt{2}\ZZ \ni x + \sqrt{2}\ZZ \to  s(x)=x \in \RR , \qquad x \in [0, \sqrt{2})\,.
   \end{align*}
   As $\sqrt{2}+ \sqrt{2} \ZZ= 0+ \sqrt{2}\ZZ$,  $s(\sqrt{2}+\sqrt{2}\ZZ)=0$.
  Then $s(q) = \frac{\sqrt{2}}{2}$ and $s(q+q)=s(\sqrt{2}+\sqrt{2}\ZZ) = 0$. Thus
  \begin{equation}
    \omega(q,q,q) = \e^{i\pi}= -1\,.
  \end{equation}
  Let $B$ be a type $\mathrm{III}$ factor with the given outer action $\alpha \colon \RR \ni t \to \alpha_t \in \Out(B)$.
  Note that $\tilde\alpha_{q}$ has order two in $\Out(B)$ and thus the
  obstruction is non-trivial since $[\omega]$ is the generator of
  $H^{3}(L^{\ast}/L, \U(1))\cong H^{3}(\ZZ/2\ZZ,\U(1)) \cong H^{4}(B\ZZ/2\ZZ ,\ZZ
  ) \cong \ZZ/2\ZZ$.
\end{example}

\begin{example}[Jones/Sutherland is a generalization of Connes]
Let $B$ be a factor and $p \in \NN$ and $x \in \ZZ$.
Let $\alpha \in \Aut(B)$ such that $\alpha^{x} \in \Inn(B)$ if and only if $p|x$.
 Thus $\alpha$ generates a
  $\ZZ/p\ZZ$--kernel given by $\theta \colon x\pmod p\mapsto [\alpha^{x}]\in \Out(B)$.

  Define a lift $\theta$ by $\tilde\alpha\colon \ZZ/p\ZZ \ni x \pmod p\mapsto \alpha
  ^{x}\in \Aut(B)$ for $0\leq x <p$.
  Observe that $\alpha^{p} \in \Inn(B)$. 
  Consequently, there is a $u\in B$ such
  that $\alpha^{p} = \Ad u$. 
  Let us fix $u$ and then define
  \begin{align*}
     & u(p-1,1) = u(1,p-1) = u                                     \\
     &  \tilde{\alpha}(q)\tilde{\alpha}(r) = \Ad u(q,r) \tilde{\alpha}(q+r) \text{ for } q, r \in \ZZ/p\ZZ\,.
  \end{align*}
  Define
  \begin{align*}
   \Out^{p}(B):=\{\alpha \in \Aut(B): \alpha^{x} \in \Inn(B) \text{ if and only if } p|x\}\,.
   \end{align*}  
  Notice $\alpha(u)=cu$ for some $c=\e^{2\pi i n/p}$ and
  $n\pmod p\in \ZZ/p\ZZ$ induces an isomorphism
  $H^{3}(\ZZ/p\ZZ,\U(1)) \to \ZZ/p\ZZ$. Respectively, the arrow maps in the
  diagram below are given by
  $\Out^{p}(B) \ni \alpha \mapsto \text{ob}(\alpha) \in H^{3}(\ZZ/p\ZZ,\U(1)
  )$, $\Out^{p}(B) \ni \alpha \mapsto \log c /2\pi i \in \ZZ/p\ZZ$\,:
  \[
    \begin{tikzcd}
      & & {H^3(\ZZ/p\ZZ,\U(1))} \arrow[dd, two heads, dashed, hook] \\ \Out^p(B)
      \arrow[rru] \arrow[rrd] & & \\ & & \{c^p =1\} \left(\xrightarrow{\e^{2\pi i x/p} \mapsfrom x }
      \ZZ/p\ZZ \right) \,.
    \end{tikzcd}
  \]
\end{example}

\begin{prop}
  \label{prop:Boeckenstein} Consider the short exact sequence of locally compact
  abelian groups
  \begin{align}
    \label{eq:ShortExactSequence}\{0\}\rightarrow \mathbb{Z}\xrightarrow{\iota}\mathbb{R}\xrightarrow p \U(1) \rightarrow \{1\}\,,
  \end{align}
  where $\iota\colon \ZZ \hookrightarrow \RR$ is the natural embedding and $p\colon
  \RR \to \U(1), t\mapsto \e^{2\pi\ima t}$. 
  Consider the \emph{Bockstein
  homomorphisms} given by
  \begin{align}\label{connector}
    \delta_{i}\colon H^{i}_{\text{Borel}}(G, \U(1)) \rightarrow H^{i+1}(G, \ZZ)
  \end{align}
  with $[b]=\delta_{1}[\id_{\U(1)}] \in H^{2}_{\text{Borel}}(G, \ZZ)$. 
  Then
  there exists a Borel section $s\colon\U(1) \rightarrow \RR$ such
  that
  \begin{align*}
    b(x,y)=s(x)+s(y)-s(x+y)\,.
  \end{align*}
\end{prop}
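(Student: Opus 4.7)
The plan is to compute the Bockstein connecting map $\delta_1$ directly via the standard snake-lemma prescription. Recall that for a class $[\omega] \in H^1_{\text{Borel}}(\U(1), \U(1))$, $\delta_1[\omega]$ is constructed by: (i) choosing a Borel lift $\tilde\omega \colon \U(1) \to \RR$ with $p \circ \tilde\omega = \omega$, (ii) observing that $\partial \tilde\omega$ automatically takes values in $\ker p = \iota(\ZZ) \subset \RR$ because $\omega$ is a cocycle, and (iii) declaring $\delta_1[\omega] := [\partial \tilde\omega] \in H^2_{\text{Borel}}(\U(1), \ZZ)$.

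First I would apply this recipe with $\omega = \id_{\U(1)}$. A Borel lift of $\omega$ along $p$ is by definition a Borel section $s \colon \U(1) \to \RR$ of $p$, and such an $s$ exists: most concretely, using the identification $\U(1) \cong \RR/\ZZ$, one can take $s([t]) = t$ for $t \in [0,1)$; more generally, Mackey's measurable cross-section theorem guarantees a Borel section for any quotient of Polish groups.

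Next, I would compute the boundary using Definition \ref{cocyle} applied to the $1$-cochain $s$: writing $\U(1)$ additively,
\begin{equation*}
\partial s (x,y) \;=\; s(y) - s(x+y) + s(x) \;=\; s(x) + s(y) - s(x+y)\,.
\end{equation*}
Since $p$ is a homomorphism and $p \circ s = \id_{\U(1)}$, we have $p(\partial s(x,y)) = y - (x+y) + x = 0$ in $\U(1)$, so $\partial s(x,y) \in \iota(\ZZ)$. Identifying $\iota(\ZZ)$ with $\ZZ$, the map $b(x,y) := s(x)+s(y)-s(x+y)$ is a Borel $\ZZ$-valued $2$-cocycle, and by construction $[b] = \delta_1[\id_{\U(1)}]$.

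The main obstacle is not a hard calculation but a bookkeeping point: one must verify that the choice is genuinely a Borel section (immediate from either the explicit $[0,1)$ realization or Mackey) and that the resulting cohomology class is independent of the choice of $s$. The latter is standard: two Borel sections $s, s'$ differ by a Borel map $\U(1) \to \ZZ$, whose coboundary in integer cohomology witnesses that $b$ and $b'$ represent the same class in $H^2_{\text{Borel}}(\U(1), \ZZ)$.
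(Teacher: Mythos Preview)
Your proof is correct and follows essentially the same approach as the paper: both compute $\delta_1[\id_{\U(1)}]$ by lifting $\id_{\U(1)}$ to a Borel section $s\colon \U(1)\to\RR$, applying the coboundary, and observing the result lands in $\ZZ$. Your version is slightly more explicit in justifying the existence of a Borel section (via the concrete $[0,1)$ lift or Mackey's theorem) and in checking independence of the choice of $s$, but the underlying snake-lemma computation is identical.
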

\begin{proof}
Let $M, N$ be any $G$-modules and $\rho \colon M \rightarrow N$ be a homomorphism, then we have associated push forward map $\rho_{\ast} \colon C^{n}(G,M) \rightarrow C^{n}(G, N)$ for all $n \in \NN$ given by 
\begin{align*}
\rho_{\ast}(f)(g_1, \ldots, g_n)=\rho(f(g_1, \ldots, g_n))
\end{align*}
for all $g_1, \ldots,g_n \in G$.
Note that if $\rho$ is a Borel map, then $\rho_{\ast}$ is a Borel map on Borel cochain complex.
Thus the short exact sequence \eqref{eq:ShortExactSequence} gives rise to the
  short exact sequence
  \begin{align}\label{chaincmplx}
    \{0\} \rightarrow C^{n}(G, \ZZ) \xrightarrow{i_{*}}C^{n}(G, \RR) \xrightarrow{p_{*}}C^{n}(G, \U(1)) \rightarrow \{1\}
  \end{align}
  on the Borel cochain complex for any abelian group $G$. Consider the identity
  map $\id_{\U(1)}\colon \U(1) \rightarrow \U(1)$ and note that $\partial \circ
  \id_{\U(1)}=0$. 
  Since $p_{\ast}$ is surjective, we have $p_{*}(b')=\id_{\U(1)}$.
  Let $s\colon \U(1) \rightarrow \RR$ be a section.
   Because of the property $p\circ
  s=\id_{\U(1)}$, we can choose $b'= s$.
   Note $\partial \circ s \in \ker(p_{*})
  =\operatorname{im}(\iota_{*})$.
   Hence there exists $a \in C^{2}(\U(1), \ZZ)$
  such that $\iota_{*}(a)(x,y)=\partial \circ s(x, y)$, which yields $a(x,y)=s(
  x)+s(y) -s(x+y)$. 
  Therefore $b(x,y)=s(x)+s(y)-s(x+y)$.
\end{proof}

\section{Computation of
\texorpdfstring{$H^{3}_{\text{Borel}}(\U(1), \U(1))$}{H3Borel(U(1), U(1))}}
\label{sec:ComputataionOfHThree}
It is more common to describe anomalies using $H^{4}
(B\U(1), \ZZ)$, e.g. see \cite{Hen17}, where $B\U(1)$ is the classifying space of $\U(1)$.
In this section, we want to identify the
cocycle from Theorem~\ref{thm:CocycleOneDimensional} with its class in $H^{4}(B\U(1), \ZZ)$. 
In particular, we show that the class associated with the cocycle
$\omega$ gives a generator of $H^{4}(B\U(1), \ZZ)$ by Theorem~\ref{thm:CocycleOneDimensional}.

Let $G$ be a polish toplogical group and $M$ be a trivial $G$-module. 
As $G$ is a polish topological group, any $\Borel$ homomorphism on $G$ is continuous, see \cite[Theorem~4, Chapter~9, Section~6]{Bo1974}.
Therefore, we have $H_{\Borel}^{1}(G,M)=\Hom(G,M)$. 
Consider the short exact sequence of locally compact
(abelian) topological groups
\begin{align*}
  \{0\} \longrightarrow \ZZ \longrightarrow \RR \xrightarrow{t\mapsto \e^{2\pi\ima t}}\U(1) \longrightarrow \{1\}\,.
\end{align*}
We get a long exact sequence with the \emph{Bockstein homomorphism}
$\delta_{i}\colon H_{\Borel}^{i}(G,\U(1))\to H_{\Borel}^{i+1}(G,\ZZ)$ for all $i \in \NN$.

In particular, we have 
\begin{align*}
H^{1}_{\Borel}(\U(1), \U(1))=\Hom(\U(1),\U(1))\cong \ZZ\,,
\end{align*}
where we have the underlying isomorphism given by
\begin{align*}
\ZZ \ni n \to \U(1) \ni z \mapsto z^{n} \in \U(1)\,.
\end{align*}
Let $[b]=\delta_{1}([\id_{\U(1)}]) \in H^{2}_{\Borel}(G,\ZZ)$. 
By Proposition
\ref{prop:Boeckenstein}, the class $[b]$ can be represented as
\begin{align*}
  b(x,y)& = s(x)+s(y)-s(x+y)&x, y \in \U(1)
\end{align*}
for a Borel section $s\colon \U(1)\to \RR$. 
For example, we can choose $s(\e^{2\pi\ima
t})= t$ for $t\in [0,1)$. 
Note that $[b]$ is a generator because the \emph{Bockstein
homomorphism} is an isomorphism by \cite{ChGuLiWe2013}.

By Theorem~\ref{thm:CocycleOneDimensional}, recall $\omega_{1}(x, y, z) = \e^{\mp 2\pi\ima s(x)(s(y)+s(z)-s(y+z))}$ for all $x, y, z \in \U(1)$ .
We claim
$\delta_{3}([\omega_{1}]) = [b]\smile [b]:= [b\wedge b]$ where
\begin{align}
  (b\wedge c)(x_{1},\ldots,x_{4})=b(x_{1},x_{2})c(x_{3},x_{4}) \text{ for all } x_1,x_2,x_3,x_4 \in \U(1)\,.
\end{align}
Namely, a straightforward computation shows that the two expressions
\begin{align}
  \delta_{3}([\omega_{1}])(x_{1},\ldots,x_{4}) & \cong s(x_{2})(s(x_{3})+s(x_{4})-s(x_{3}+x_{4}) )+                         \\
                                               & - s(x_{1}+x_{2})(s(x_{3})+s(x_{4})-s(x_{3}+x_{4}))+                        \\
                                               & + s(x_{1})(s(x_{2}+x_{3})+s(x_{4})-s(x_{2}+x_{3}+x_{4}))+                  \\
                                               & - s(x_{1})(s(x_{2})+s(x_{3}+x_{4})-s(x_{2}+x_{3}+x_{4})) +                 \\
                                               & + s(x_{1})(s(x_{2})+s(x_{3})-s(x_{2}+x_{3}))                               \\
  (b\wedge b)(x_{1},\ldots,x_{4})              & \cong (s(x_{1})+s(x_{2})-s(x_{1}+x_{2}))(s(x_{3})+s(x_{4})-s(x_{3}+x_{4}))
\end{align}
are equal for all $x_1,\cdots,x_4 \in \U(1)$.

Consequently, we have
\begin{equation}\label{4cohomeq}
  H^{4}(B\U(1), \ZZ) =\langle [b\wedge b]\rangle \,,
\end{equation}
see for example, \cite[Theorem~6]{Hen17} and
$\delta_{3}\colon H^{3}_{\Borel}(\U(1),\ZZ)\to H^{4}(B\U(1),\ZZ)$ is an
isomorphism by \cite[Remark~ 4.16]{WaWo2015}.

In particular, since $\delta_{3}([\omega_{1})] = [b\wedge b]$, we get that
$[\omega_{1}]$ generates $H^{3}_{\Borel}(\U(1), \U(1))$. 
Since we can realize $[
\omega_{1}]$ as an obstruction and the trivial obstruction can be realized by an
outer $\U(1)$ action, we have shown:
\begin{thm}
  \label{ExistenceOfObstruction} All obstructions for $\U(1)$ are realized by a
  $\U(1)$--kernel on the hyperfinite type $\mathrm{III}_{1}$ factor.
\end{thm}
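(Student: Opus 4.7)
The plan is to assemble the pieces developed earlier in this section. Since the Bockstein homomorphism $\delta_3\colon H^3_{\Borel}(\U(1),\U(1))\to H^4(B\U(1),\ZZ)$ is an isomorphism by \cite{WaWo2015} and $H^4(B\U(1),\ZZ)=\langle[b\wedge b]\rangle\cong\ZZ$, the equality $\delta_3([\omega_1])=[b\wedge b]$ established just above shows that $[\omega_1]$ is a generator of $H^3_{\Borel}(\U(1),\U(1))\cong\ZZ$. The theorem therefore reduces to realizing every integer multiple $n\cdot[\omega_1]$, $n\in\ZZ$, as the obstruction class of some $\U(1)$--kernel on the hyperfinite type $\mathrm{III}_1$ factor.

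For the realization I would combine Theorem \ref{thm:CocycleOneDimensional} with the identity $\omega_m\cdot\omega_n=\omega_{m+n}$ already recorded above, which at the level of cohomology reads $[\omega_m]=m[\omega_1]$. For each $m\in\NN$, $\alpha^-$-induction from the $\U(1)$-current net $\A$ to its BMT extension $\A_{L_m}=\A\rtimes L_m$ produces a $\U(1)$--kernel (via the identification $\RR/L_m\cong\RR/\ZZ\cong\U(1)$) whose obstruction is $[\omega_m]$, giving all positive multiples of $[\omega_1]$. Performing $\alpha^+$-induction instead flips the sign in Theorem \ref{thm:CocycleOneDimensional} and yields the cocycles $\omega_{-m}$, covering the negative multiples. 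The trivial class is realized by any genuinely outer $\U(1)$-action on the hyperfinite type $\mathrm{III}_1$ factor, so the whole of $\ZZ\cdot[\omega_1]=H^3_{\Borel}(\U(1),\U(1))$ is exhausted.

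It remains to verify that the algebras carrying these $\U(1)$--kernels are the hyperfinite type $\mathrm{III}_1$ factor. For the $\U(1)$-current net $\A(I)$ this is the classical fact of \cite{DR1975,Lo1979} recalled after \eqref{CCR}; for each BMT extension $\A_{L_m}(I)$ it follows because $\A_{L_m}$ is a local, finite-index extension of the conformal net $\A$, so that its local algebras remain hyperfinite type $\mathrm{III}_1$ factors by standard results on conformal nets. The principal obstacle here is not analytical but one of careful book-keeping: one must match the conventions of $\alpha^\pm$-induction, the natural isomorphism $\RR/L_m\cong\U(1)$ given in Corollary after Theorem \ref{thm:CocycleOneDimensional}, and the sign conventions in $\omega_m$, so that the family $\{[\omega_{\pm m}]\}_{m\in\NN}\cup\{0\}$ actually covers every element of $\ZZ\cong H^3_{\Borel}(\U(1),\U(1))$ exactly once.
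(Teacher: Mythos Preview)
Your proposal is correct and follows essentially the same approach as the paper: both argue that $\delta_3([\omega_1])=[b\wedge b]$ makes $[\omega_1]$ a generator of $H^3_{\Borel}(\U(1),\U(1))\cong\ZZ$, invoke Theorem~\ref{thm:CocycleOneDimensional} together with $\omega_m\omega_n=\omega_{m+n}$ to realize all nonzero multiples via $\alpha^\pm$-induction on the BMT extensions, and handle the trivial class by an outer $\U(1)$-action. If anything, your version is more explicit than the paper's one-line summary in spelling out how the two signs of $\alpha$-induction cover both positive and negative multiples and in addressing why $\A_{L_m}(I)$ is again the hyperfinite type~$\mathrm{III}_1$ factor.
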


\subsection{Computation of obstruction in \texorpdfstring{$n$}{n}-dimensional torus}
\label{ssec:ComputationOfobstructionNDimensional}
Let $L$ be an even positive lattice of rank $n$ and $F = L\otimes_\ZZ \RR$ 
the associated $n$-dimensional Euclidean space.
Let us consider the conformal net $\A_F$ associated with $F$ and 
fix a proper interval $I$.
We recall that given a smooth function $\ell_0\colon \SOne\to\RR$ with $q_\RR(\ell_0)=1$ and $\supp(\ell_0) \subset I$, we can consider the family of functions $\{f\cdot \ell_0\}_{f\in F}$ with
$q_F(f\cdot\ell_0) = f$, where $q_F$ is the charge on $F$.
This fixes a homomorphism $F\to\Aut(\A_F(I))\colon f \mapsto \rho_{f\cdot\ell_0}$.
By abuse of notation we denote $\rho_{f\cdot\ell_0}$ by simply $\rho_f$, for any 
$f \in F$.

Recall that by Proposition~\ref{localextn} we have a local extension $\A_L\supset\A_F$
coming from the twisted crossed product $\A_L(I) = \A_F(I)\rtimes_{\rho,b} L$.
Here  $b \colon L \times L \rightarrow \U(1)$ is a 2-cocycle given for example by
  \begin{equation*}
    b(e_{i}, e_{j}) =
    \begin{cases}
      (-1)^{\langle e_i , e_j\rangle_{F}} & \text{if}\quad i < j \\
      1                                   & \text{else}\,,
    \end{cases}
  \end{equation*}
where $\{e_1,\dots,e_n\}$ is a basis of $L$.

Denote $\iota\colon \A_F(I)\to\A_L(I)$ the canonical inclusion and 
by $\{\psi_\ell\}_{\ell\in L}$ the unitaries implementing the crossed product.
In particular, we have 
$\psi_\ell\iota(x) = \iota(\rho_f(x))\psi_\ell$ for all $x\in \A_F(I)$
and
\begin{align*}
  \psi_{\ell}\psi_{m}= b(\ell, m) \psi_{\ell+m}\text{ for all }\ell, m \in L\,. 
\end{align*}
Recall that the $\alpha$-induction on the inclusions $\A_F(I)\subset\A_L(I)$ is defined 
by
\begin{align*}
  \beta_{t}(\iota(a)\psi_m)&=\iota(\rho_t(a)\varepsilon(\rho_t, \rho_m)^\ast)\psi_{m}
  \,& t \in F\,.
\end{align*}
By following the same construction done in $\U(1)$--kernel, it follows that the map
$\lambda\colon F \times L \rightarrow \U(1)$ is given as
\begin{align*}
\lambda(t, \ell)=\iota(\varepsilon(\rho_{t},\rho_{\ell})^{\ast})\,,
\end{align*} 
where $\varepsilon$ is the associated braiding in the Equation~\eqref{braidtu} on $\A_F(I)$.
Choose a Borel section
$s \colon F/L \rightarrow F$. 
Following \cite{Jo1980} for all $q, r, t \in F/L$, the obstruction $\omega_{\mp}$ is given by
\begin{align*}
  \omega_{\mp}(q,r,t) 
    ={} & b(s(q)+s(r)-s(q+r), s(q+r)+s(t)-s(q+r+t)) \times \\
        & b(s(r)+s(t)-s(r+t), s(q)+s(r+t)-s(q+r+t))^{\ast}\times \\
        & \e^{\mp\pi\ima\langle s(q), s(r)+s(t)-s(r+t) \rangle} \\
   ={}  & \e^{\mp\pi\ima ( [s(q)+s(r)-s(q+r), s(q+r)+s(t)-s(q+r+t)] 
          - [s(r)+s(t)-s(r+t), s(q)+s(r+t)-s(q+r+t)]) } \times\\
        & \e^{\mp\pi\ima  \langle s(q), s(r)+s(t)-s(r+t) \rangle }
\end{align*}
where
\begin{align}
  [x,y]+[y,x] & = \langle x,y\rangle \pmod 2\quad\text{and}\quad b(x,y)=(-1)^{[x,y]}
\end{align}
and
\begin{align*}
  s(f) = \sum_{i=1}^{n} s(f_{i})\cdot e_{i} \quad\text{for}\qquad f =\sum_{i=1}^{n} f_{i}\cdot e_{i}\,.
\end{align*}
Here $x, y \in F$.
Namely, we have the following ($s_{i_1\cdots i_n}= s(i_{1}+\cdots + i_{n})$ $s
_{i_1\cdots i_n|j_1\cdots j_m}= s(i_{1}+\cdots + i_{n}) + s(j_{1}+\cdots +j_{m}
) - s(i_{1}+\cdots +i_{n}+j_{1}+\cdots j_{m})$)

\begin{align}
\label{eq:comptcohom}
  2\delta_{3}([\omega_{L}])(x_{1},\ldots,x_{4}) 
    & \cong \langle s_{2}|s_{3|4}\rangle + [s_{2|3}|s_{23|4}] - [s_{3|4}|s_{2|34}]+{}    \\
    & - \langle s_{12}| s_{3|4}\rangle - [s_{12|3}|s_{123|4}] + [s_{3|4}|s_{12|34}] +{}  \\
    & + \langle s_{1}| s_{23|4}\rangle +[s_{1|23}| s_{123|4}] - [s_{23|4}|s_{1|234}]+{}  \\
    & - \langle s_{1}| s_{2|34}\rangle - [s_{1|2}| s_{12|34}] + [s_{2|34}| s_{1|234}]+{} \\
    & + \langle s_{1}| s_{2|3}\rangle + [s_{1|2}|s_{12|3}] - [s_{2|3}| s_{1|23}]\,. 
\end{align}
\begin{rmk}
  \label{rmk:omegaG} Let us compare this with cocycles on $\U(1)^{n}$. 
Choosing a fixed basis $(e_{1},\ldots,e_{n})$ of $L$ gives us an identification
  $\U(1)^{n} \to F/L$ via the isomorphism
  \begin{align}\label{x_i}
    \left(\e^{2\pi \ima x_i}\right)_{i=1,\ldots n}                      & \mapsto\sum_{i} x_{i} e_{i} +L \qquad x\in \RR^{n} ,~ x_i \in \RR \\
    \left(\e^{2\pi\ima \langle e_i^\ast|x\rangle}\right)_{i=1,\ldots,n} & \mapsfrom x+L\,,
  \end{align}
  where $(e^{\ast}_{1},\ldots, e^{\ast}_{n})$ is the dual basis of
  $(e_{1},\ldots, e_{n})$ with respect to the inner product
  $\langle\slot , \slot\rangle$, \ie $\langle e_{i}^{\ast},e_{j}\rangle = \delta
  _{i,j}$. 
  Under this identification, we can see $\omega_{L}$ as a class $[\omega
  _{G}]=H^{3}_{\Borel}(\U(1)^{n},\U(1))$ only depending on the Gram matrix $G_{i,j}
  =\langle e_{i} , e_{j}\rangle_{F}$. 
  Note that the pull back of the scalar
  product $\langle\slot , \slot\rangle_{F}$ under the map
  $\RR^{n} \to F, x\mapsto \sum_{i=1}^{n} x_{i}e_{i}$ is given by
  $\langle\slot, \slot\rangle_{G}:=\langle\slot, G\slot\rangle_{\RR^n}$, where $\langle
  \slot, \slot\rangle_{\RR^n}$ is the standard Euclidean inner product on $\RR^{n}$.
  Similarly, we can choose $[\slot|\slot]_{G} =\langle \slot, G^{\utr} \slot\rangle
  _{\RR^n}$ where $G^{\utr}$ is the upper triangle part of $G$.
   Furthermore, consider
  \begin{align}\label{bmap}
    b_{i}(x_{1},x_{2}) = s_{1|2,i}:= s(x_{1,i}) + s(x_{2,i}) - s(x_{1,i}+x_{2,i})
  \end{align}
  as our standard generators of $H^{2}(\U(1)^{n},\ZZ) = \langle b_{1},\ldots,b_{n}
  \rangle$.
\end{rmk}
\begin{example}
  \label{ex:A2} For the $A_{2}$ root lattice, we have
  \begin{align}
    G =\begin{bmatrix}2&-1\\-1&2\end{bmatrix} \qquad G^{\utr}:= \begin{bmatrix}0&-1\\0&0\end{bmatrix}
  \end{align}
  and for any $x, y \in \RR^n$, we define
  \begin{align}
    \langle x | y\rangle_{G} & = \langle x|Gy\rangle_{\RR^n}                      \\
    [x|y]_{G}                & = -x_{1}y_{2} = \langle x|G^{\utr}y\rangle_{\RR^n}
  \end{align}
  A lengthy but straightforward calculation shows
  \begin{align}
    -2[\delta_{3}([\omega_{G}])-b_{1}\wedge b_{1} - b_{2}\wedge b_{2}](x_{1},\ldots,x_{4}) & = s_{2,1}s_{3|4,2}+ s_{2|3,1}s_{23|4,2}- s_{3|4,1}s_{2|34,2}     \\
                                                                                           & - s_{12,1}s_{3|4,2}- s_{12|3,1}s_{123|4,2}+ s_{3|4,1}s_{12|34,2} \\
                                                                                           & + s_{1,1}s_{23|4,2}+s_{1|23,1}s_{123|4,2}- s_{23|4,1}s_{1|234,2} \\
                                                                                           & - s_{1,1}s_{2|34,2}- s_{1|2,1}s_{12|34,2}+ s_{2|34,1}s_{1|234,2} \\
                                                                                           & + s_{1,1}s_{2|3,2}+ s_{1|2,1}s_{12|3,2}- s_{2|3,1}s_{1|23,2}     \\
                                                                                           & + s_{3|4,1}s_{2,2}- s_{3|4,1}s_{12,2}+ s_{23|4,1}s_{1,2}         \\
                                                                                           & - s_{2|34,1}s_{1,2}+ s_{2|3,1}s_{1,2}                            \\
                                                                                           & =2s_{1|2,1}s_{3|4,2}=2(b_{1}\wedge b_{2})(x_{1},\ldots,x_{4})
  \end{align}
  where in the first step the diagonal terms cancel as in the $\U(1)$-case and
  the second step follows after expanding.
 In particular, we have
  \begin{align*}
    \delta_{3}([\omega_{G}]) & = (b_{1}\wedge b_{1}+b_{2}\wedge b_{2}) - (b_{1}\wedge b_{2}) \,.
    \end{align*}
\end{example}
\begin{prop}
  \label{rootlaticecoho} Fix a basis $\{e_1, \ldots, e_n\}$ of an even positive lattice $L \subseteq F$, where $F$ is an $n$-dimensional Euclidean space.
  Let $G=(G_{i,j}:1\leq i, j \leq n\}$ be the Gram matrix with respect to the basis and $b_i$ be defined in the Equation~\eqref{bmap}.
  Then using the notation of Remark
  \ref{rmk:omegaG}, we have
  \begin{equation}
    \delta_{3}([\omega_{G}]) = \frac{1}{2} \sum_{i} G_{i,i}\cdot b_{i}\wedge b_{i}
    +\sum_{i< j}G_{i,j}\cdot b_{i}\wedge b_{j} 
    \,, \qquad G_{i,j}=\langle e_i, e_j \rangle_F\,.
  \end{equation}
\end{prop}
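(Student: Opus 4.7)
The plan is to decompose $\omega_G$ multiplicatively according to the bilinear structure of the forms $\langle\cdot,\cdot\rangle_G$ and $[\cdot,\cdot]_G$, reducing the Bockstein computation to a diagonal part handled by Theorem~\ref{thm:CocycleOneDimensional} and off-diagonal two-index pieces handled by the calculation in Example~\ref{ex:A2}.

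First I would use the bilinearity of $\langle x,y\rangle_G=\sum_{i,j}G_{i,j}x_iy_j$ and $[x,y]_G=\sum_{i<j}G_{i,j}x_iy_j$ inside the exponent of $\omega_G$ to factor it as
\begin{equation*}
\omega_G \;=\; \prod_{i=1}^n \omega_G^{(i,i)}\cdot \prod_{i<j}\omega_G^{(i,j)}\,,
\end{equation*}
where $\omega_G^{(i,i)}$ is a Borel cocycle on $\U(1)^n$ depending only on the $i$-th coordinate and carrying $G_{i,i}$ as overall scalar in the exponent, while $\omega_G^{(i,j)}$ depends only on the $(i,j)$-coordinates and carries $G_{i,j}$. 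Linearity of $\delta_3$ then splits the problem into the Bockstein of each factor.

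For the diagonal factor $\omega_G^{(i,i)}$, I would observe that it is exactly the pullback along the $i$-th coordinate projection $\U(1)^n\to\U(1)$ of the one-variable cocycle $\omega_{m_i}$ of Theorem~\ref{thm:CocycleOneDimensional} with $m_i=G_{i,i}/2\in\NN$; the integrality of $m_i$ comes from the evenness of $L$ and the factor $\tfrac12$ from the $\pi\ima$ in $\omega_G$ versus the $2\pi\ima$ in $\omega_m$. Naturality of $\delta_3$ with respect to continuous group homomorphisms together with the identity $\delta_3([\omega_1])=[b\wedge b]$ established in Section~\ref{sec:ComputataionOfHThree} then gives $\delta_3([\omega_G^{(i,i)}])=\tfrac{G_{i,i}}{2}\,b_i\wedge b_i$, whose sum over $i$ is the first term of the claim.

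For an off-diagonal factor $\omega_G^{(i,j)}$ with $i<j$, I would pull back to $\U(1)^2$ via the $(i,j)$-coordinate projection and recognize the remaining cocycle as the rank-two situation with $2\times2$ symmetric matrix having zero diagonal and off-diagonal entry $G_{i,j}$. The expansion and cancellations carried out in Example~\ref{ex:A2} (stripped of its diagonal contributions, which have already been accounted for in the previous step) apply verbatim and yield $\delta_3([\omega_G^{(i,j)}])=G_{i,j}\,b_i\wedge b_j$; summing over $i<j$ produces the second term. The main obstacle is precisely this off-diagonal step: confirming that the fifteen-term expansion displayed in Equation~\eqref{eq:comptcohom}, once projected to two coordinates and combined with the $[\cdot,\cdot]_G$-contributions, collapses cleanly to a single $b_i\wedge b_j$. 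My strategy is to phrase the cancellation as a single universal algebraic identity on Borel cochains over $\U(1)^2$, valid for any rank-two symmetric even-integer matrix with zero diagonal, and then deduce the general case by naturality with respect to the coordinate inclusion $\U(1)^2\hookrightarrow\U(1)^n$; the diagonal step and final summation are then routine.
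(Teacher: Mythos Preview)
Your proposal is correct and is essentially the paper's own argument. The paper's proof also uses linearity in $G$ (equivalently, the multiplicative factorization $\omega_{G_1}\omega_{G_2}=\omega_{G_1+G_2}$), checks the diagonal piece $2E_{i,i}$ via the one-dimensional computation, and checks the off-diagonal piece via the $A_2$ example $2E_{i,i}+2E_{j,j}-E_{i,j}-E_{j,i}$, then subtracts; your naturality/pullback phrasing and your ``strip the diagonal from Example~\ref{ex:A2}'' step amount to the same subtraction. One small caution: your intermediate ``zero-diagonal rank-two matrix'' is not itself the Gram matrix of an even positive lattice, so you should frame that step purely at the level of the explicit cocycle formula (as an algebraic identity in the section $s$), or, as the paper does, deduce it by subtracting the verified diagonal contribution from the verified $A_2$ case rather than appealing to a non-existent lattice.
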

\begin{proof}
Let $\{E_{i, j}: 1\leq i, j \leq n\}$ be the standrad matrix units of $M_n(F)$.
  This follows by writing $G = \sum_{i} G_{i,i}E_{i,i}+\sum_{i<j}G_{i,j}(E_{i,j}
  + E_{j,i})$ and we already have checked it for $2E_{i,i}$ (by Equation~\eqref{eq:comptcohom}) and
  $2E_{i,i}+2E_{j,j}- E_{i,j}-E_{j,i}$ in Example~\ref{ex:A2}. Thus, the
  result follows in general by linearity.
\end{proof}
\begin{rmk}
Note that $G$ needs to be the Gram matrix of a symmetric even bilinear
form on $\ZZ^{n}$. 
It follows immediately that
\begin{equation}\label{cohompd}
  \omega_{G_1}\omega_{G_2}= \omega_{G_1+G_2}\,, G_1, G_2 \text{ are Gram matrices}\,.
\end{equation}

Choosing a basis in
$\Hom(F/L,\TT) = \{\e^{2\pi \ima \langle x|\slot\rangle}: x \in L^{\ast}\} \cong
L^{\ast}$ (dual lattice of $L$),
we define
\begin{equation}
  \tilde b_{i}(x_{1},x_{2}) = \langle e_{i}^{\ast} | s_{1} + s_{2} - s_{12}\rangle
\end{equation}
and get that $\langle \tilde b_{i}: i=1,\ldots,n \rangle = H^{2}(F/L,\ZZ)$.
It follows that
\begin{equation}
  \sum_{i,j}\frac{1}{2} \langle x| e_{i}^{\ast}\rangle G_{i,j}\langle e_{i}^{\ast}
  |y \rangle = \frac{1}{2} \langle x|y\rangle\,, \qquad x, y \in L^{\ast}\,.
\end{equation}
Let $\beta \colon L\times L \to \ZZ$ be a bicharacter with
$\beta(x,y)+\beta(y,x) = \langle x | y\rangle$ for all $x,y \in L$. Then the class
$[d_{L}] \in H^{4}(T_{L},\ZZ)$ can be represented as
\begin{equation}
  d_{L}(x_{1},x_{2},x_{3},x_{4}) = \beta(s_{L}(x_{1}) +s_{L}(x_{2}) - s_{L}(x_{1}
  +x_{2}), s_{L}(x_{1})+s_{L}(x_{2})-s_{L}(x_{3}+x_{4}))
\end{equation}
where $s_{L}\colon T_{L} \to F$ is a section for the short exact sequence
\[
  \begin{tikzcd}
    \{0\} \arrow[r] & L \arrow[r] & F \arrow[r] & T_L \arrow[r] \arrow[l, "s_L"',
    dashed, bend right] & \{0\}\,.
  \end{tikzcd}
\]
\end{rmk}
\begin{cor}
  \label{cor:MainResult}
  Let $R$ be the hyperfinite type $\mathrm{III}_1$ factor and $n\in \NN$. 
  Let $T$ be
  a compact $n$-torus.
  Then all obstructions in $H^{3}_{\Borel}(T,\U(1))$ arise
  from a $T$-kernel on $R$.
  In other words, there are anomalous actions of $T$
  on $R$ for any class in $H^{3}_{\Borel}(T,\U(1))$.
\end{cor}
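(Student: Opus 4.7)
The plan is to combine Proposition~\ref{rootlaticecoho} with a tensor--product / additivity argument, using that $\alpha^{-}$-induction contributes the inverse cocycle of $\alpha^{+}$-induction.

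First, fix an isomorphism $T \cong T_{n} = \RR^{n}/\ZZ^{n}$. As explained in Section~\ref{sec:ComputataionOfHThree} and Remark~\ref{rmk:omegaG}, the Bockstein yields
\begin{equation*}
H^{3}_{\Borel}(T,\U(1)) \cong H^{4}(BT,\ZZ) \cong \operatorname{Sym}^{2}(\ZZ^{n})\,,
\end{equation*}
so every class corresponds to a symmetric integer matrix $M=(M_{ij})$ under the basis $\{b_{i}\smile b_{j}:i\leq j\}$ of $H^{4}(BT_{n},\ZZ)$.

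Second, by Proposition~\ref{rootlaticecoho}, for any even positive lattice $L\subset\RR^{n}$ with basis $(e_{1},\ldots,e_{n})$ and Gram matrix $G_{ij}=\langle e_{i},e_{j}\rangle$, the $T_{L}$--kernel on the hyperfinite type $\mathrm{III}_{1}$ factor $\A_{L}(I)$ obtained by $\alpha^{+}$-induction on $\A_{F}\subset\A_{L}$, pulled back to $T_{n}$ via Equation~\eqref{x_i}, realizes the class
\begin{equation*}
\delta_{3}([\omega_{G}]) = \tfrac{1}{2}\sum_{i} G_{i,i}\,b_{i}\smile b_{i} + \sum_{i<j} G_{i,j}\,b_{i}\smile b_{j}\,,
\end{equation*}
which corresponds to the symmetric integer matrix with $(i,i)$-entry $G_{i,i}/2$ and $(i,j)$-entry $G_{i,j}$ for $i<j$. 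Thus, given $M$, I set $G(M)_{ii}:=2M_{ii}$ and $G(M)_{ij}:=M_{ij}$ for $i\neq j$, which is an even symmetric integer matrix, and reduce the problem to realizing $[\omega_{G(M)}]$.

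Third, if $G(M)$ is positive definite, I am done using a lattice with Gram matrix $G(M)$. For arbitrary $M$, I choose $k\in\NN$ large enough that $G_{1}:=G(M)+2kI$ is positive definite, and set $G_{2}:=2kI$ (realized by $L_{2}=\sqrt{2k}\,\ZZ^{n}$). The desired $T$--kernel is then obtained by tensoring the $\alpha^{+}$-induction $T$--kernel for the lattice $L_{1}$ with Gram matrix $G_{1}$ together with the $\alpha^{-}$-induction $T$--kernel for $L_{2}$, letting $T$ act diagonally. The $\alpha^{-}$-induction contributes the inverse cocycle, as encoded in the $\mp$ sign in Theorem~\ref{braidndim}; combined with the additivity of the construction in the Gram matrix (Equation~\eqref{cohompd}) and the fact that the Sutherland obstruction is multiplicative under tensor products, the total obstruction corresponds to $G_{1}-G_{2}=G(M)$, \ie the class $[\omega]$. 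Finally $\A_{L_{1}}(I)\otimes\A_{L_{2}}(I)$ is hyperfinite of type $\mathrm{III}_{1}$.

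The main obstacle is verifying that a diagonal tensor product of two $T$--kernels is itself a $T$--kernel with obstruction equal to the product of the individual obstructions. Injectivity into $\Out$ follows from Proposition~\ref{Prop:Kernel} applied factor-wise, and the existence of a Borel lift is preserved under tensoring. Multiplicativity of the Sutherland class follows by noting that if $\alpha^{(i)}_{g}\alpha^{(i)}_{h}=u_{i}(g,h)\alpha^{(i)}_{gh}$ with $3$-cocycle obstructions $\omega_{i}$ as in Equation~\eqref{obt2}, then $u_{1}(g,h)\otimes u_{2}(g,h)$ witnesses the tensor-product lift and a direct substitution in \eqref{obt2} produces the product $\omega_{1}\omega_{2}$. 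Once this piece of the obstruction-theoretic bookkeeping is in place, the corollary follows from the lattice construction.
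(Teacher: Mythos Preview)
Your proof is correct and follows the same overall scaffold as the paper (identify $H^3_{\Borel}(T,\U(1))$ with symmetric integer matrices via $\delta_3$, invoke Proposition~\ref{rootlaticecoho} for positive even Gram matrices, then handle the general case by additivity), but the two arguments diverge at the last step.

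The paper shows that every symmetric integer matrix with even diagonal lies in the abelian group generated by Gram matrices of specific positive even lattices (diagonal lattices and permutations of $A_2\oplus A_1^{\,n-2}$), and then appeals to Equation~\eqref{cohompd}. That appeal is terse: it explains why the cohomology classes add, but does not spell out how to manufacture a single $T$--kernel realizing a $\ZZ$-linear combination, in particular how to realize negative coefficients. Your argument is more explicit on exactly this point: you write $G(M)=G_1-G_2$ with $G_1$ positive definite and $G_2=2kI$, and then build the $T$--kernel as a diagonal tensor product, using $\alpha^+$-induction for $L_1$ and $\alpha^-$-induction for $L_2$ so that the obstruction of the second factor is the inverse class. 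This is a cleaner decomposition and fills in the step the paper leaves implicit.

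Two small remarks. First, the $\mp$ sign you cite is not in Theorem~\ref{braidndim} (which is purely the braiding formula) but in the obstruction computation of Subsection~\ref{ssec:ComputationOfobstructionNDimensional} and Theorem~\ref{thm:CocycleOneDimensional}; adjust the reference. Second, your injectivity claim ``applied factor-wise'' is correct but rests on the standard fact that for factors $M_1,M_2$, an automorphism of the form $\alpha_1\otimes\alpha_2$ on $M_1\mathbin{\bar\otimes}M_2$ is inner only if both $\alpha_i$ are inner; it would be worth stating this rather than pointing to Proposition~\ref{Prop:Kernel}, which by itself only controls each factor separately.
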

\begin{proof}
  Without loss of generality, let $T=\U(1)^{n}$.
  Let $[\omega] \in H^{3}_{\Borel}(T, \U(1))$.
  From the Künneth formula it follows that  
  \begin{align*}
  [\omega]=\sum_{1\leq i\leq j\leq n}G_{ij}\delta_3([b_i] \cup [b_j])\,.
  \end{align*}
  For $G \in M_n(\ZZ)$, assume $G_{ij}=0$ for $j<i$.
Consider the symmetric matrix $H=G+G^{T} \in M_n(\ZZ)$ with even values in diagonal.  
If $H$ is the Gram matrix of an even positive lattice of rank $n$, the class $[\omega]$ is realized in $\A_L$ by Proposition~\ref{rootlaticecoho} .

For general $H$, note that it suffices to show that 
$H$ is in the abelian group generated by Gram matrices of positive even lattices by Equation~\eqref{cohompd}.
Note that diagonal lattices $\sqrt{2m_1}\ZZ \oplus \dots \oplus \sqrt{2m_n}\ZZ$ with $(m_1,\dots,m_n) \in \ZZ^n$ generate all diagonal even matrices.

Note that $A_2 \oplus A_1 \oplus \dots \oplus A_1$ has Gram matrix $\diag(2,\dots,2)-E_{2,1}-E_{1,2}$ for $1 \leq i <j \leq n$, where $E_{i,j}$ is standard matrix unit.
Taking index permutations we get $\diag(2,\dots,2)-E_{i,j}-E_{j,i}$ for $1 \leq i<j\leq n$.
Together with the diagonal even matrices this generates the abelian group of symmetric integer matrices with even diagonal, which concludes the proof.
\end{proof}

\section{Local extensions}
\label{ssec:localextensions}
Let $I, J \subset S^1 \setminus \{-1\}$ be open proper intervals and 
$\A=\{\A_F(I): I \in \cI\}$ and let $\Delta = \{\rho_\ell : \ell \in L\}$.
Recall $\rho$ is defined in Section ~\ref{ssec:ComputationOfobstructionNDimensional}.  
 Define $\langle\iota(\A_F(I)),\{\psi_{\rho}\}_{\rho\in\Delta}
  \rangle:=\A_L(I)$ with $\rho$ localized endomorphism of $\A$ localized in $I$

  and $\psi_{\rho}$ charged fields, \ie
  \begin{align}\label{locl}
    \psi_{\rho}\iota(a) & = \iota\rho(a)\psi_{\rho} & a\in \A_F(I)\,.
  \end{align}
  Define
  \[
     \langle\iota(\A_F(J),\{\iota(z_{\rho,J})\psi_{\rho}
    \}_{\rho}\rangle:=\A_L(J)
  \]
  where $z_{\rho,J}$ (charge transporter) is a unitary such that $\rho^{J}=\Ad z_{\rho,J}\circ\rho$
  is localized in $J$.
  It follows that $z_{\rho,J}$ is localized.
\begin{prop}
\label{prop:diffreg}
  We show $\A_L$ is local if and only if
  \begin{align}\label{crossbraid}
    \psi_{\rho}\psi_{\sigma} & = \iota(\varepsilon(\sigma,\rho)) \psi_{\sigma}\psi_{\rho} & \text{for all }\rho,\sigma\in \Delta \,.
  \end{align}
\end{prop}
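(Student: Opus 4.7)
The strategy is to reduce locality of $\A_L$ to a single commutation identity between the charged fields and then recognize the intertwiner that appears as the DHR braiding. Fix two disjoint proper intervals $I, J\subset\SOne$ with cyclic order $I<J$. By definition, $\A_L(I)$ is generated by $\iota(\A_F(I))$ and $\{\psi_\rho\}_{\rho\in\Delta}$ (with $\rho$ localized in $I$), while $\A_L(J)$ is generated by $\iota(\A_F(J))$ and $\{\iota(z_{\sigma,J})\psi_\sigma\}_{\sigma\in\Delta}$. Locality $\A_L(I)\subseteq\A_L(J)'$ is therefore equivalent to the four pairwise commutation conditions between these generating sets.

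Three of the four are automatic. The pair $\iota(\A_F(I)),\iota(\A_F(J))$ commutes by locality of $\A_F$. For $\psi_\rho$ against $\iota(\A_F(J))$, since $\rho$ is localized in $I\subset J'$, one has $\rho|_{\A_F(J)}=\id$, so $\psi_\rho\iota(a)=\iota(\rho(a))\psi_\rho=\iota(a)\psi_\rho$ for all $a\in\A_F(J)$. Symmetrically, $\sigma^{(J)}=\Ad z_{\sigma,J}\circ\sigma$ is localized in $J\subset I'$, so for $a\in\A_F(I)$ the identity $z_{\sigma,J}\sigma(a)=\sigma^{(J)}(a)z_{\sigma,J}=az_{\sigma,J}$ yields $[\iota(\A_F(I)),\iota(z_{\sigma,J})\psi_\sigma]=0$.

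The key step is the remaining cross-commutator of charged fields. Using \eqref{locl} and factoring,
\begin{align*}
\psi_\rho\,\iota(z_{\sigma,J})\psi_\sigma
 = \iota(\rho(z_{\sigma,J}))\,\psi_\rho\psi_\sigma
 = \iota(z_{\sigma,J})\,\iota\bigl(z_{\sigma,J}^{\ast}\rho(z_{\sigma,J})\bigr)\,\psi_\rho\psi_\sigma.
\end{align*}
The DHR formula \eqref{DHRb}, applied with $u_\rho=1$, $u_\sigma=z_{\sigma,J}$, $\hat\rho=\rho$ localized in $I$ and $\hat\sigma=\sigma^{(J)}$ localized in $J$, under the ordering $I<J$, yields exactly $z_{\sigma,J}^{\ast}\rho(z_{\sigma,J})=\varepsilon(\rho,\sigma)$. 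Therefore the cross-commutator vanishes if and only if $\iota(\varepsilon(\rho,\sigma))\psi_\rho\psi_\sigma=\psi_\sigma\psi_\rho$, which, after interchanging the labels $\rho\leftrightarrow\sigma$ (both run over $\Delta$), is precisely \eqref{crossbraid}.

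The main technical point to tend to is consistency with the opposite cyclic ordering: running the same computation with $J<I$ would produce $\varepsilon^{-}=\varepsilon^{\ast}$ in place of $\varepsilon^{+}$, and one must check this gives the same condition \eqref{crossbraid}. In the pointed lattice setting at hand the braiding values are of the form $(-1)^{\langle q_\ell,q_m\rangle_F}\in\{\pm 1\}$ by Theorem~\ref{braidndim} together with the integrality of $L$, so $\varepsilon^{+}=\varepsilon^{-}$; equivalently the monodromy $\varepsilon(\rho,\sigma)\varepsilon(\sigma,\rho)=1$ is trivial, making \eqref{crossbraid} symmetric in the two orientations. Combining the three automatic vanishings with \eqref{crossbraid} then establishes both implications of the equivalence.
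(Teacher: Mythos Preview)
Your argument is correct and follows the same core strategy as the paper: reduce locality to the commutator of charged fields and identify the resulting intertwiner with the DHR braiding via \eqref{DHRb}. The paper's proof transports \emph{both} endomorphisms, working with $\iota(z_{\rho,I_1})\psi_\rho$ and $\iota(z_{\sigma,I_2})\psi_\sigma$ for two general disjoint intervals $I_1,I_2$, computes the two products, and reads off $\varepsilon^{\pm}(\sigma,\rho)$ (the sign depending on the order of $I_1,I_2$); it then appeals to \cite[Theorem~6.8]{DeGi2018} rather than checking the three ``automatic'' commutations directly. Your version instead keeps $\rho$ at the reference interval (so $u_\rho=1$) and only transports $\sigma$, which slightly simplifies the algebra; you also spell out the three trivial commutations and resolve the $\varepsilon^{+}$ versus $\varepsilon^{-}$ ambiguity explicitly using that the braiding takes values in $\{\pm1\}$ for the lattice $\Delta$. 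Both routes lead to the same identification; yours is a bit more self-contained, the paper's is slightly more symmetric and defers the bookkeeping to the cited reference.
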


\begin{proof}
  This follows from the following computation.
   Let $I_{1},I_{2}$ be disjoint intervals.
  Then
  \begin{align}
    \iota(z_{\rho,I_1})\psi_{\rho} \iota(z_{\sigma,I_2})\psi_{\sigma} & =\iota(z_{\rho,I_1}\rho(z_{\rho,I_2})) \psi_{\rho}\psi_{\sigma}   \\
    \iota(z_{\sigma,I_2})\psi_{\sigma}\iota(z_{\rho,I_1})\psi_{\rho}  & =\iota(z_{\sigma,I_2}\sigma(z_{\rho,I_1})\psi_{\sigma}\psi_{\rho}
  \end{align}
  which are equal if and only if
  \begin{align}
    \psi_{\rho}\psi_{\sigma} & = \iota(\rho(z_{\rho,I_2}^{\ast}) z_{\rho,I_1}^{\ast} z_{\sigma,I_2}\sigma(z_{\rho,I_1})\psi_{\sigma}\psi_{\rho} =\iota(\varepsilon^{\pm}(\sigma,\rho))\psi_{\sigma}\psi_{\rho}
  \end{align}
  where $\pm$ depends on the order of $I_{1}$ and $I_{2}$.
  In other words, locality of the ambient extension directly follows from \cite[Theorem~6.8]{DeGi2018}.
\end{proof}

\begin{example}
Let
  \[
    \left\{(x_{1},\ldots,x_{n+1}) \in \ZZ^{n+1}: \sum_{i=1}^{n+1}x_{i}= 0\right
    \} \subseteq \left\{x\in \RR^{n+1}: \sum_{i=1}^{n+1}x_{i}=0\right\} \cong \RR
    ^{n}\,.
  \]
  Then there is a basis $e_{i}=(\delta_{i,j}- \delta_{i+1,j})_{j} = (0,\ldots,
  0,1,-1,0,\ldots,0)$ with the Gram matrix
  \begin{align*}
    \langle e_{i},e_{j}\rangle =\begin{cases}2&\text{if }i=j\\ -1&\text{if }|i-j|=1\\ 0&\text{if }|i-j|\leq 2\,.\end{cases}
  \end{align*}
  Let the construction and hypothesis be the same as in Proposition ~\ref{localextn}.
\end{example}
So we must need that
$\varepsilon(\rho_{m},\rho_{m}) = \e^{\pi\ima \langle q_m,q_m\rangle}\stackrel{!}{=}
1$.
In particular, we have
\[
  \varepsilon(\rho_{m},\rho_{n}) = \e^{\pi \ima \langle q_m,q_n\rangle}=(-1)^{\langle
  q_m,q_n\rangle}\,.
\]
It is clear that $\langle x,x\rangle \in 2\ZZ$ for all $x\in L$, which implies
that $\langle x,y \rangle \in \ZZ$ for all $x,y\in L$. 
Given a lattice with the
basis $\{e_{1},\ldots, e_{n}\}$, choose
$b\colon L \times L \to \{\pm 1\}\leq \U(1)$ such that
\begin{align*}
  b(e_{i},e_{j}) = \begin{cases}(-1)^{\langle e_i,e_j\rangle}&\text{if }i\leq j\\ 0&\text{otherwise}\,.\end{cases}
\end{align*}
In particular, one has
$b(e_{i},e_{j})b(e_{j},e_{i})^{\ast} = (-1)^{\langle e_i,e_j\rangle}$.
Therefore, one obtains
\[
  \psi_{i}\psi_{j} = b(e_{i},e_{j})\psi_{i+j}= b(e_{i},e_{j})b(e_{j},e_{i})^{\ast}
  \psi_{j}\psi_{i} = (-1)^{\langle e_i,e_j\rangle}\psi_{j}\psi_{i}\,,
\]
where $\psi_{i}$ are the unitaries which implement the crossed product $\A_{\RR^n}(I)\rtimes_{b} L$ .
Thus
\[
  \A_{\RR^n} \subseteq \A_{\RR^n}\rtimes_{b} L
\]
is a local extension.

\subsection{Topology on sectors}
\label{ssec:Topology}
Let $\A$ be a local $\Mob$ covariant conformal net on a Hilbert space $\Hil$ (see Definition~\ref{cft}).
We define an automorphism of the net $\A$
written as $\alpha \in \Aut(\A)$ to be a family $\alpha=\{\alpha_{I}\in\Aut(\A(I))\}$ which is compatible under inclusion, ${\alpha_I}|J = \alpha_{J}$ for $J\subset I$ and $\Omega$-preserving, \ie $(\Omega,\alpha_{I}(x)\Omega)=(\Omega, x\Omega)$ for all $x \in \A(I)$.
We say $U\in \U(\A)$ if the $U\in \U(\Hil)$ subject to $U\Omega = \Omega$ and $\Ad U \in \Aut(\A(I))$ for all $I\in\cI$.
Note that there is a one-to-one correspondence between $\U(\A)$ and $\Aut(\A)$. 
Namely, given $U \in \U(\A)$, define $\alpha_{I} =\Ad U$ and given $\alpha$, define
$Ux\Omega = \alpha_{I}(x)\Omega$ for some interval $I$. 
Note that such $U$ does not
depend on $I$.
Let $G\leq \Aut(\A)$ be a subgroup which is compact\footnote{Let $\A$ be a
conformal net with split property. 
Then $\Aut(\A)$ is a compact metrizable group
\cite{DoLo1984}. 
Thus every closed subgroup is compact.}. 
Let $I_{0}\in\cI$ with $I_{0}\Subset S^{1}\setminus\{-1\}$. 
We say $\rho \in \tRep G^{I_0}(\A)$ if $\rho=\{\rho_{I}\colon \A(I)\to \cB(\Hil)\}$
is a compatible family of normal $\ast$-morphisms and there is an automorphism
$\alpha \in G$ such that
\[
  \rho_{I}(a) =
  \begin{cases}
    a             & \text{if }a\in \A(I) \text{ and }I< I_{0} \\
    \alpha(a) & \text{if }a\in \A(I)\text{ and }I > I_{0}\,.
  \end{cases}
\]
Define $\partial \rho = \alpha$.

Let $I_{0}\in\cI$ be an interval. 
We want to equip
$\tilde G = \pi_{0}(\tRep G^{I_0}(\A)^{\times})$ with a topology. 
Here for $\cC$ is
a tensor category, where $\cC^{\times}$ denotes 2-group which is the full and replete
subcategory of invertible objects and $\pi_{0}(\cC)$ the isomorphism classes of
objects. 
Note that we have a natural embedding
$\pi_{0} \colon \tRep G^{I_0}(\A)^{\times}\to\Out(\A(I_{0}))$ by
$[\rho]\mapsto [\rho_{I_0}]$. 
Choose $I_{1},I_{2}\in\cI$ with $I_{2} \Supset I_{1}
\Supset I_{0}$. (Here $I \Supset J$ means $I \supset \bar{J}$ with $I, J \in \cI$)

By \cite{Mg2005}, note that $\tRep G^{I_0}(\A)$ is the category of $G$-twisted representations of $\A$ that are $G$-localized in $I_0$.

\textbf{Open question:}
Let $G\leq \Aut(\A)$ be a compact group. 
Consider $\pi_0(\tRep G^{I_0}(\A^{\times}))$, the group formed under composition of equivalence classes of $G$-twisted invertible representations of $\A$ which are $G$-localized in $I_0$ (\cite{Mg2005}).
Call $H=\pi_0(\tRep G^{I_0}(\A^{\times}))$. 
  The map
  \[
    \partial \colon \pi_{0}(\tRep G^{I_0}(\A)^{\times}) \to G,~ [\beta]\mapsto
    \partial\beta
  \]
 is  a twisted representation to its degree.
We equip $\pi_{0}(\tRep G^{I_0}(\A)^{\times})$ with the initial topology given by the degree map $\partial \colon H \rightarrow G$.
When is $H$ compact?
What are the sectors associated to it and what is $H$--kernel?
Is there an intrinsic topology on $G$--twisted sectors (see for \eg \cite{Mg2005} )?
\begin{conj}
  If $\A$ is completely rational and $G\leq \Aut(\A)$ closed, then
  $\pi_{0}(\tRep G^{I_0}(\A)^{\times})$ is compact. Indeed, let $G_{0}\leq G$
  be the image of $\partial$, then we have a short exact sequence
  \[
    1\to \pi_{0}(\Rep^{I_0}(\A)^{\times})\to \pi_{0}(\tRep G^{I_0}(\A)^{\times}
    ) \to G_{0}\to 1
  \]
\end{conj}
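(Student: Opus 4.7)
The plan is to establish the short exact sequence first, and then to read off compactness from finiteness of the kernel (by complete rationality) combined with compactness of the image $G_{0}$ inside $G$. From $\partial(\rho\sigma) = \partial\rho \cdot \partial\sigma$ it is immediate that $\partial\colon H := \pi_{0}(\tRep G^{I_0}(\A)^{\times}) \to G$ is a group homomorphism, and I set $G_{0} := \partial(H)$. For exactness on the left one checks that if $\partial\rho = \id$, then $\rho_{I} = \id_{\A(I)}$ for every $I$ disjoint from $I_{0}$ on either side, so $\rho$ is an ordinary DHR endomorphism localized in $I_{0}$; invertibility in $\tRep G^{I_0}(\A)^{\times}$ forces invertibility in $\Rep^{I_0}(\A)^{\times}$, and conversely every invertible DHR sector tautologically gives a twisted representation of trivial degree.

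Next I would invoke complete rationality: by the Kawahigashi--Longo--M\"uger theorem, $\Rep^{I_0}(\A)$ is then a unitary modular tensor category with finitely many simple objects, so the Picard group $\pi_{0}(\Rep^{I_0}(\A)^{\times})$ appearing as the kernel is finite. What remains is to show that $G_{0}$ is closed, hence compact, inside $G$. I expect this to be the main obstacle. A reasonable route is to describe $G_{0}$ as the set of $g \in G$ admitting a $g$-twisted representation localized in $I_{0}$ and to prove that this condition is closed: for completely rational $\A$, M\"uger's orbifold construction (\cite{Mg2005}) shows that every element of every finite subgroup of $\Aut(\A)$ admits a twisted sector, so $G_{0}$ contains a dense torsion subgroup of $G$. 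The missing step is a continuity or approximation argument showing that the set of admissible degrees is closed in the ambient group; this is where I expect to need substantial additional input, plausibly via constructing $g$-twisted sectors for continuous $g$ by approximating $g$-cocycle data in the $G$-crossed braided extension of $\Rep^{I_0}(\A)$, or by exhibiting $G_{0}$ as the vanishing locus of a continuous obstruction map.

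Granting these two steps, I would equip $H$ with the topology making $\partial$ a local homeomorphism onto $G_{0}$ with discrete fibres isomorphic to the finite group $\pi_{0}(\Rep^{I_0}(\A)^{\times})$; this refines the initial topology coming from $\partial$ by separating points inside each fibre, and it is the unique refinement making $H$ into a topological group for which the sequence of the conjecture is exact in topological groups. Then $H$ is a finite cover of the compact Hausdorff space $G_{0}$, hence itself compact, and the conjectured compactness follows at once. In summary the hard content is concentrated in closedness of $G_{0}$; the algebraic identification of the kernel and the finiteness statement are direct consequences of, respectively, the localization definition of $\tRep G^{I_0}(\A)$ and the KLM theorem.
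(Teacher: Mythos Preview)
The statement you are attempting to prove is labeled a \emph{Conjecture} in the paper and is immediately preceded by a paragraph headed ``Open question''; the paper offers no proof, only the subsequent Remark verifying the lattice case $0 \to L^\ast/L \to F/L \to F/L^\ast \to 0$, where $\partial$ is a finite covering. There is therefore no paper proof to compare your proposal against.

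That said, your outline is a reasonable strategy and correctly isolates closedness of $G_0$ in $G$ as the crux. Two remarks. First, the paper equips $H = \pi_0(\tRep G^{I_0}(\A)^\times)$ with the \emph{initial} topology from $\partial$, not a refinement; under that topology the open sets of $H$ are exactly the sets $\partial^{-1}(V)$ for $V$ open in $G$, so compactness of $H$ is literally equivalent to compactness of $G_0$ as a subspace of $G$, and the finiteness of the kernel coming from KLM --- while relevant to the exact sequence and to your refined topology --- plays no role in the compactness assertion as the paper phrases it. Your local-homeomorphism topology is arguably the more natural one, but it answers a slightly different question than the one posed. Second, the step you yourself flag as missing (closedness of $G_0$, via density of torsion elements plus a continuity argument for the existence of twisted sectors) is, under the paper's topology, the entire content of the conjecture; neither you nor the paper supplies it, so your proposal remains a sketch with an acknowledged gap rather than a proof.
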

\begin{rmk}
In the Example~\ref{exmpltwisted}, for an even lattice $L \subset F$ with $F$ an $n$-dimensional Euclidean space,
we have the short exact sequence 
\[
  0 \to L^{\ast}/L \to F/L \to F/L^{\ast} \to 0\,.
\]
Hence the map $\partial \colon H=\pi_0(\tRep G^{I_0}(\A^{\times})):=F/L \rightarrow G:=F/L^{\ast}$ is a finite covering map. 
So the topology is well-defined.
\end{rmk}
\subsection{\texorpdfstring{$F/L$}{F/L}--kernel is \texorpdfstring{$F/L^{\ast}$}{F/L*}--twisted}
\label{ssec:twistedkernels}
\begin{example}\label{exmpltwisted}
Let $L^{\ast} = \{x\in F: \langle x,L \rangle \subset \ZZ\}$ be the dual
lattice and $L\subset L^{\ast}$.
  If $F=\RR$ and $L=\sqrt{2n}\ZZ$, then $L^{\ast} = \frac{1}{\sqrt{2n}}\ZZ$
  and $L^{\ast}/L \cong \ZZ/2n\ZZ$.
\end{example}
Let $(F, \langle \slot,\slot\rangle)$ be a $n$-dimensional Euclidean space and $L \subseteq F$ be an even lattice.
We start with $\A_{F}$ and we choose a reference interval $I_{0}$ and $\{\rho_{q}\}_{q\in
L}$ localized in $I_{0}$.
Then, define that $L\to \Aut(\A_{F}(I))$ a homomorphism.
The local net associated to the lattice $L$ is defined by
\[
  \A_{L}(I_{0}) := \A_{F}(I_{0})\rtimes_{b}L = \langle \iota(\A_{F}(I_{0})) ,\psi
  _{q} : q\in L \rangle
\]
where $\psi_{q} \in (\iota,\iota\rho_{q})$ and
\[
  \A_{L}(I) := \langle \iota(\A_{F}(I)), \iota(z_{q,I_0,I})\psi_{q} : q\in L\rangle
\]
with unitaries $z_{q,I_0,I}\in (\rho_{q},\rho^{I}_{q})$ and $\rho^{I}_{q}$ is
localized in $I$,
where $b \colon L\times L \longrightarrow \ZZ_2$ is a bilinear form as in \cite{Ka1998} fulfilling
\begin{align*}
    b(\alpha,\alpha) = \frac 12 \langle \alpha,\alpha\rangle 
    \quad \text{for all } \alpha \in L
    \komma
\end{align*}
\eg if $\{\alpha_1,\ldots,\alpha_n\}$ is a basis of $L$, then we can choose
\begin{align*}
    b(\alpha_i,\alpha_j) &= 
        \begin{cases} 
            \langle \alpha_i,\alpha_j \rangle \mod 2 & i<j\\
            \frac 12 \langle \alpha_i,\alpha_i\rangle \mod 2 & i=j\\
            0 &i>j 
            \punkt
        \end{cases}
\end{align*}
On $A\rtimes_{b} L$ we have the so-called global action (inner symmetry) of the
torus $F/L^{\ast}$ densely defined by linearly extending
\begin{align}\label{globalact}
  \alpha_{f+L^\ast}(\iota(a)\psi_{x}) & = \e^{-2\pi \ima \langle f,x\rangle}\iota(a)\psi_{x} & x\in L
\end{align}
and $(A\rtimes_{b} L)^{F/L^\ast}= A$. 
Note that
$F/L^{\ast} \ni f+L^{\ast} \to (x\to\e^{2\pi \ima \langle f,x\rangle}
) \in \hat{L}$
is the natural isomorphism identifying $F/L^{\ast}$ with the Pontryagin
dual $\hat L = \Hom(L,\U(1))$.
Notice that for any lattice $\Gamma \subset F$, 
  \begin{align*}
    \Gamma \mapsto \hat\Gamma=\Hom(\Gamma,\U(1))\,.
  \end{align*}
  In Pontrayagin duality, we equip $\hat\Gamma$ with a uniform topology on compact sets.
 Note that this is a compact group.
Consider the short exact sequence
\[
  0 \to L^{\ast}/L \to F/L \to F/L^{\ast} \to 0\,.
\]

We have a $F/L$--kernel acting on $\A_{L}(I_{0})$. 
Let $s \colon F/L \rightarrow F$ be a Borel section.
If we want to extend it to
slightly bigger interval $I = I_{-}\sqcup \bar I_{0}\sqcup I_{+}$ with
$I_{-}<I_{+}$, then we can extend it as follows to
$\A_L(I_{-})\cup\A_L(I_{+})\cup \A_L(I_{0})$ by
\[
  \tilde\rho_{s(f+L)}(a) =
  \begin{cases}
    \tilde\rho_{s(f+L)}(a) & \text{if }a\in \A_L(I_{0}) \\
    a                      & \text{if }a\in \A_L(I_{-}) \\
    \alpha_{f+L^\ast}(a)   & \text{if }a\in \A_L(I_{+})\,,
  \end{cases}
\]
which extends by strong additvity to $\A_L(I)$.
Here $\tilde{\rho}$ is extension of $\rho$ from $\A_F(I_0)$ to $\A_L(I_0)$ via $\alpha$-induction.

\begin{prop}
  \label{prop:Mueger} The automorphism $\tilde\rho_{s(f+L)}$ defined above extends normally to $\A_{L}(I)$. 
  In particular, the automorphism $\tilde
  \rho_{s(f+L)}$ is a $\alpha_{f+L^\ast}$-localized in $I_{0}$ in the sense of M\"uger
 and localized if $f \in L^{\ast}$.
\end{prop}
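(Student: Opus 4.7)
The plan is to reduce the claim, via locality and strong additivity, to three consistency checks on the subalgebras $\A_L(I_-), \A_L(I_0), \A_L(I_+)$, and then to verify consistency through an explicit braiding computation.

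First I would note that $I_-, I_0, I_+$ are pairwise disjoint open proper subintervals of $I$, so by locality of the extension $\A_L$ (Proposition~\ref{localextn}) the three local algebras pairwise commute inside $\A_L(I)$. Combined with strong additivity of $\A_L$, inherited from strong additivity of the $\U(1)^n$-current net through the finite-index local extension, we obtain
\[
\A_L(I) = \A_L(I_-) \vee \A_L(I_0) \vee \A_L(I_+).
\]
Hence it suffices to define $\tilde\rho_{s(f+L)}$ as a normal automorphism on each of the three pairwise-commuting subalgebras in a mutually consistent way; the multiplicative extension to the generated von Neumann algebra is then automatic and normal.

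The main obstacle is the consistency check. The prescription on $\A_L(I_0)$ is the $\alpha$-induction formula $\tilde\rho_t(\iota(a)\psi_m) = \iota(\rho_t(a)\varepsilon(\rho_t,\rho_m)^\ast)\psi_m$. I would verify directly that this formula, applied to a generator $\iota(z_{q, I_0, I_\pm})\psi_q =: \psi_q^{I_\pm}$ of $\A_L(I_\pm)$, reproduces the desired behavior. Using naturality of the DHR braiding together with $z_{q, I_0, I_\pm} \in \iota(\A_F)$, a short computation gives
\[
\tilde\rho_{s(f+L)}(\psi_q^{I_\pm}) = \iota\bigl(\varepsilon(\rho_{s(f+L)}, \rho_q^{I_\pm})^\ast\bigr) \psi_q^{I_\pm}.
\]
By Theorem~\ref{braidndim}, $\varepsilon(\rho_{s(f+L)}, \rho_q^{I_-}) = 1$ because $\rho_q^{I_-}$ is localized to the left of $\rho_{s(f+L)}$; hence $\tilde\rho_{s(f+L)}$ acts as the identity on $\A_L(I_-)$. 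On the right-hand side, $\varepsilon(\rho_{s(f+L)}, \rho_q^{I_+}) = e^{2\pi\ima \langle s(f+L), q\rangle}$, and writing $s(f+L) = f + \ell$ with $\ell \in L$, evenness of $L$ gives $\langle \ell, q\rangle \in \ZZ$, so
\[
\tilde\rho_{s(f+L)}(\psi_q^{I_+}) = e^{-2\pi\ima \langle f, q\rangle}\psi_q^{I_+} = \alpha_{f+L^\ast}(\psi_q^{I_+}),
\]
matching~\eqref{globalact}. Since both prescriptions also act trivially on $\iota(\A_F(I_\pm))$ (because $\rho_{s(f+L)}$ is DHR-localized in $I_0$ and $\alpha_{f+L^\ast}$ fixes $\iota(\A_F)$), consistency on all generators follows, and the extension yields a normal automorphism of $\A_L(I)$.

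Once established, Müger localization is immediate: for any proper interval $J < I_0$ we enlarge $I_-$ to contain $J$, whence $\tilde\rho_{s(f+L)}|_{\A_L(J)} = \id$; similarly $\tilde\rho_{s(f+L)}|_{\A_L(J)} = \alpha_{f+L^\ast}|_{\A_L(J)}$ for $J > I_0$. Finally, if $f \in L^\ast$ then $\langle f, q\rangle \in \ZZ$ for every $q \in L$, so $\alpha_{f+L^\ast}$ acts trivially on every $\psi_q$ by~\eqref{globalact} and hence equals the identity on all of $\A_L$. In that case $\tilde\rho_{s(f+L)}|_{\A_L(J)} = \id$ for every $J$ disjoint from $I_0$, i.e., $\tilde\rho_{s(f+L)}$ is DHR-localized in $I_0$ in the usual sense.
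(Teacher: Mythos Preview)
Your proof is correct and follows essentially the same strategy as the paper: verify that the $\alpha$-induction automorphism on $\A_L(I)$, restricted to the generators $\psi_q^{I_\pm}$ of $\A_L(I_\pm)$, acts trivially on the left and as $\alpha_{f+L^\ast}$ on the right. The packaging differs in two minor ways. First, the paper carries out the computation explicitly in the Weyl picture: it writes the charge transporter as $W([q(L_\pm-L_0)])$ and evaluates $\rho_{\ell_0 f}(W([q(L_\pm-L_0)]))$ via the gauge-automorphism formula, obtaining the phases $e^{\pm i\pi\langle q,f\rangle}$ that combine with $e^{-i\pi\langle q,f\rangle}$ from $\alpha$-induction on $\psi_q$; you instead invoke naturality of the DHR braiding to reduce directly to $\varepsilon(\rho_{s(f+L)},\rho_q^{I_\pm})$. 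Both routes land on the same scalars. Second, the paper avoids strong additivity of $\A_L$ altogether by noting that the $\alpha$-induction formula is already globally defined on $\A_L(I)$ (since $\rho_{\ell_0 f}$ is a DHR automorphism of $\A_F(I)$), so one only needs to check restrictions; your appeal to strong additivity is not wrong but is unnecessary for the same reason, and the paper's route keeps the argument self-contained without importing inheritance of strong additivity under finite-index extensions.
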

\begin{proof}
To prove this, we do the construction for $I$ instead of $I_{0}$ with $\ell_{0}$ having charge $1$ supported in $I_{0}$ and it suffices to check that restricted on $I_{\pm}$ the formula holds.
  First note that $I\supset I_{0}$ then
  $\A_{L}(I) = \langle \iota(\A_{F}(I)), \psi_{q}\rangle$. 
 If we use the same formula for $\tilde\rho_{f,I}$ as for $\A_{L}(I_{0})$, then trivially $\tilde
  \rho_{f,I}\restriction_{\A_L(I_0)}=\tilde\rho_{f,I_0}$.
  Here we write $s(f+L)$ as $f$ by a slight abuse of notation with a Borel section map $s \colon F/L \longrightarrow F$ for $f \in F$.

  Let us take $x = \iota(a W([q(L_{-}-L_{0})]) \psi_{q}$ where $a\in \A_{F}(I_{-}
  )$ and therefore $x\in \A_{L}(I_{-})$ where $L_{\bullet}'=\ell_{\bullet}$ and
  $\rho_{q}(W([g])) = \e^{\ima q\int\langle \ell_0, f\rangle }W([g])$ where $\tilde\rho\iota=\iota\rho$ and $\supp{L_{\bullet}'}\subset I_{\bullet}$.
  
 Denote by $\alpha_{\rho_{\ell_{0}f}, I}$ the $\alpha$-induction to $\A_{L}(I)$ corresponding to the localized action $\rho_{\ell_{0}f, I}$ on $\A_F(I)$.
 By definition, we have $\tilde{\rho}_{f,I}=\alpha_{\rho_{\ell_{0}f}, I}$.
 From Theorem~\ref{braidndim}, it follows that
 \begin{align*}
 \alpha_{\rho_{\ell_{0}f}, I}(\psi_q)=e^{- \ima \pi \langle q, f \rangle} \psi_q\,.
 \end{align*}
 Further, note that $\rho_{\ell_{0}f,I}(a)=a$ since $a \in \A_{F}(I_{-})$ and $\supp(\ell_0) \subseteq I_0$.
  Then one has
  \begin{align}
    \tilde \rho_{f,I}(x) & = \iota(\rho_{\ell_{0}f,I}(a W([q(L_{-}-L_{0})]) \tilde\rho_{f,I}(\psi_{q})                                                            \\
                         & = \iota\rho_{\ell_{0}f,I}(a W([q(L_{-}-L_{0})])\alpha_{\rho_{\ell_0f},I}(\psi_{q})                                                        \\
                           & = \iota(\rho_{\ell_{0}f,I}(a))\iota\rho_{\ell_{0}f,I}(W([q(L_{-}-L_{0})]) \e^{-\pi\ima \langle q|f\rangle}\psi_{q}        \\
                         & = \iota(a)\iota\rho_{\ell_{0}f,I}(W([q(L_{-}-L_{0})]) \e^{-\pi\ima \langle q|f\rangle}\psi_{q}                                         \\
                         & = \iota(a)\e^{\ima \int \ell_0(L_--L_0) \langle q|f\rangle }\iota(W([q(L_{-}-L_{0})]) \e^{-\pi\ima \langle q|f\rangle}\psi_{q} \\
                         & = \iota(a)\e^{\ima \pi \langle q|f\rangle }\iota(W([q(L_{-}-L_{0})]) \e^{-\pi\ima \langle q|f\rangle}\psi_{q}                  \\
                         & = \iota(a)\iota(W([q(L_{-}-L_{0})]) \psi_{q}                                                                                   \\
                         & =x\punkt
  \end{align}
  Write $y=\iota(bW([q(L_{+}-L_{0})])\psi_q$. 
  Since $b \in \A_{F}(I_{+})$ and
 $\rho_{\ell_{0}f,I}$ is localized in $I_0$, we have $\rho_{\ell_{0}f,I}(b)=b$.
   Similarly, one gets
  \begin{align}
    \tilde \rho_{f,I}(x) & = \iota(\rho_{\ell_0 f,I}(b W([q(L_{+}-L_{0})]) \tilde\rho_{f,I}(\psi_{q})                                                            \\
                         & = \iota\rho_{\ell_0 f,I}(b W([q(L_{+}-L_{0}])\alpha_{\rho_{\ell_0f},I}(\psi_{q})                                                        \\
                         & = \iota(b)\iota\rho_{\ell_0 f,I}(W([q(L_{+}-L_{0})]) \e^{-\pi\ima \langle q|f\rangle}\psi_{q}                                         \\
                         & = \iota(b)\e^{\ima \int \ell_0(L_+-L_0) \langle q|f\rangle }\iota(W([q(L_{+}-L_{0})]) \e^{-\pi\ima \langle q|f\rangle}\psi_{q} \\
                         & = \iota(b)\e^{-\ima \pi \langle q|f\rangle }\iota(W([q(L_{+}-L_{0})]) \e^{-\pi\ima \langle q|f\rangle}\psi_{q}                 \\
                    &= \e^{-2 \ima \pi \langle q|f\rangle }\iota(b)\iota(W([q(L_{+}-L_{0})]) \psi_{q}                \\
                         & = \alpha_{f+L^\ast}(x)\punkt
  \end{align}
\end{proof}
\begin{cor}
  Any irreducible localized representation of $\A_{L}$ is equivalent to some $\tilde
  \rho_{s(f+L)}$ for some $f\in L^{\ast}$.
In particular, we have an $L^{\ast}
  /L$-kernel of localized automorphisms.
\end{cor}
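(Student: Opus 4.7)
The plan is to combine Proposition~\ref{prop:Mueger}, Proposition~\ref{Prop:Kernel}, and the Dong--Xu classification of representations of $\A_L$ from \cite{DoXu2006}.

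First, I would observe that for any $f\in L^{\ast}$, the coset $f+L^{\ast}$ is trivial in $F/L^{\ast}$, so the global gauge automorphism $\alpha_{f+L^{\ast}}$ defined in Equation~\eqref{globalact} equals the identity on $\A_L$. Proposition~\ref{prop:Mueger} then guarantees that $\tilde\rho_{s(f+L)}$ is a genuinely localized automorphism of $\A_L$, hence defines an irreducible localized representation, since automorphism sectors are automatically simple.

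Next, I would check that the assignment $L^{\ast}\ni f\mapsto [\tilde\rho_{s(f+L)}]\in \Out(\A_L(I_0))$ is a group homomorphism with kernel exactly $L$. The group homomorphism property follows from the functoriality of $\alpha$-induction: composing $\tilde\rho_{s(f+L)}$ and $\tilde\rho_{s(f'+L)}$ matches $\tilde\rho_{s(f+f'+L)}$ up to an inner automorphism coming from the difference $s(f+L)+s(f'+L)-s(f+f'+L)\in L$, which acts inner via the crossed-product unitaries. For the kernel, I would restrict the map $F\to \Out(\A_L(I_0))$, $t\mapsto [\beta_{\rho_t}]$, from Proposition~\ref{Prop:Kernel} to the subgroup $L^{\ast}\subseteq F$; that proposition computes the kernel as $L$, and since $L\subseteq L^{\ast}$ this yields an injective group homomorphism
\[
    L^{\ast}/L \hookrightarrow \Out(\A_L(I_0)),\qquad f+L \longmapsto [\tilde\rho_{s(f+L)}],
\]
which is the claimed $L^{\ast}/L$-kernel of localized automorphisms.

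Finally, to prove that every irreducible localized representation arises in this way, I would invoke the Dong--Xu result in \cite{DoXu2006} that $\Rep(\A_L)$ has exactly $|L^{\ast}/L|$ isomorphism classes of irreducible objects, labeled by the discriminant group $L^{\ast}/L$. Since the injective map above produces $|L^{\ast}/L|$ pairwise inequivalent irreducible sectors of $\A_L$, a cardinality argument then forces these sectors to exhaust the set of equivalence classes of irreducible localized representations.

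The main obstacle I anticipate is not in the abstract argument, but in the bookkeeping needed to ensure the matching between the $\alpha$-induced sectors $\tilde\rho_{s(f+L)}$ and the Dong--Xu sectors: one must verify that the twisted crossed product $\A_F(I)\rtimes_b L$ of Proposition~\ref{localextn} coincides with the Dong--Xu construction of $\A_L$, and that the charge labeling in $L^{\ast}/L$ obtained from the charge $q_F$ matches theirs. Once this identification is established, the propositions already proved in the paper assemble the statement cleanly.
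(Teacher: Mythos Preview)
Your proposal is correct and essentially matches the paper's implicit reasoning: the paper states this result as a corollary without proof immediately after Proposition~\ref{prop:Mueger}, relying on that proposition for the localization when $f\in L^{\ast}$, on Proposition~\ref{Prop:Kernel} for injectivity of $L^{\ast}/L\to\Out(\A_L(I_0))$, and on the Dong--Xu classification \cite{DoXu2006} (cited in the introduction) for the count of irreducible sectors. Your write-up simply makes these steps explicit, and your caveat about matching the twisted crossed product construction with the Dong--Xu model is well-placed, though the paper takes this identification for granted.
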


\begin{appendix}
  \label{sec:Appendix}
\section{Extensions}
\subsection{Extensions of endomorphisms of von Neumann algebra}
\label{ssec:ExtensionsofendomorphismsofvonNeumann algebra} Let $M$ be any
factor equipped with a faithful normal state $\varphi$ and in standard form with a cyclic and separating
vector $\Omega_{\varphi}$ acting on the GNS Hilbert space $L^{2}(M, \varphi)$.
Let $\alpha$ be a
faithful action of $\mathbb{Z}$ on $M$, \ie $\alpha \colon \ZZ \rightarrow \Aut(M)$ a monomorphism. 
Consider $M \rtimes_{\alpha}\mathbb{Z}= (
x, U_{n}) \subseteq \mathcal{B}(L^{2}(\mathbb{Z}, L^{2}(M, \varphi))$, where $x \in M$ and $U_n \in \U(L^{2}(\mathbb{Z}, L^{2}(M, \varphi)), n \in \ZZ$. 
Moreover, the extended state $\tilde{\varphi}$ associated to $\varphi$ on the crossed product $M \rtimes_{\alpha} \ZZ$ is given by
\begin{align*}
\tilde{\varphi}\left(\sum_{n \in \mathbb{Z}}a_{n} U_{n}\right)=\varphi(a_{0})\,.
\end{align*}
 Also denote by $\|\cdot
\|_{2, \varphi}$ the associated Hilbert space norm of elements in $L^{2}(M, \varphi)$ and $\|\cdot\|_{2, \tilde{\varphi}}$  the associated Hilbert space norm for elements in $L^{2}(M \rtimes_{\alpha}\ZZ, \tilde{\varphi})$.
Let $\beta \in \Aut(M)$ and $z_{n} \in \U(M)$ for all $n \in \ZZ$.
Then define
$\tilde{\beta} \colon M \rtimes_{\alpha}\ZZ\rightarrow M \rtimes_{\alpha}\ZZ$
given by
\begin{align}\label{DEFAUT}
\tilde{\beta}\left(\sum_{n \in \ZZ}a_{n} U_{n}\right)=\sum_{n \in \ZZ}\beta(a_{n})z
_{n}U_{n}\punkt
\end{align}
\begin{lem}\label{WED}
  The map $\tilde{\beta} \colon M \rtimes_{\alpha}\ZZ\longrightarrow M \rtimes_{\alpha}\ZZ$ defined in the Equation~\eqref{DEFAUT} is a well-defined map.
\end{lem}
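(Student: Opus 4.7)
The plan is to establish well-definedness in two stages: first on the dense $\ast$-subalgebra of finite sums $\sum_n a_n U_n$ inside $M\rtimes_\alpha\ZZ$, and then on the full von Neumann algebra by a normality argument.

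For the first stage, the key point is uniqueness of the Fourier-type expansion on finite sums: any such element admits a unique presentation $x = \sum_n a_n U_n$ with $a_n\in M$, since the canonical conditional expectation $E\colon M\rtimes_\alpha\ZZ \to M$ recovers each coefficient via $a_n = E(xU_n^*)$. Hence the prescription $\tilde\beta(\sum_n a_n U_n) = \sum_n \beta(a_n) z_n U_n$ is unambiguous and defines a linear map on this subalgebra.

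For the second stage, I would show that $\tilde\beta$ is bounded with respect to the $\|\cdot\|_{2,\tilde\varphi}$-norm. Using the diagonal structure $\tilde\varphi(y U_k) = \delta_{k,0}\varphi(y)$ together with the covariance relation $U_n^* y = \alpha_{-n}(y) U_n^*$, one computes
\begin{equation*}
\|\tilde\beta(x)\|_{2,\tilde\varphi}^{2} = \sum_n \varphi\bigl(\alpha_{-n}(z_n^* \beta(a_n^* a_n) z_n)\bigr),
\end{equation*}
which, by unitarity of the $z_n$ together with suitable $\beta$- and $\alpha$-invariance of $\varphi$, coincides with $\|x\|_{2,\tilde\varphi}^{2}$. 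This yields an isometric extension of $\tilde\beta$ to $L^2(M\rtimes_\alpha\ZZ,\tilde\varphi)$, which via the standard form transports to a normal map on $M\rtimes_\alpha\ZZ$. The main obstacle is controlling the isometry bound in the absence of explicit invariance hypotheses on $\varphi$; a cleaner route that I would pursue if the direct estimate fails is to realize $\tilde\beta$ as spatial conjugation $\Ad V$ by a unitary $V$ on $\ell^2(\ZZ)\otimes L^2(M,\varphi)$ built from the canonical implementer of $\beta$ and the unitaries $z_n$, after which checking $V(aU_n)V^* = \beta(a) z_n U_n$ on generators establishes well-definedness (and normality) simultaneously.
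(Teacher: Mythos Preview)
Your Stage~1 is correct and matches the paper's argument in spirit: the paper proves well-definedness on finite sums by computing $\|x\|_{2,\tilde\varphi}^{2}=\sum_n\|x_n\|_{2,\varphi}^{2}$ and concluding that $\sum_n x_n U_n=0$ forces each $x_n=0$, hence the Fourier coefficients are unique. Your use of the conditional expectation $a_n=E(xU_n^{\ast})$ is an equivalent route to the same uniqueness statement, and arguably cleaner.

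Your Stage~2, however, addresses a question the paper does not: the paper's lemma only claims well-definedness on the algebraic span of the $a_nU_n$, and its proof stops exactly there. The extension to the full von Neumann algebra (via an $L^{2}$-isometry or a spatial implementation $\Ad V$) is not part of this lemma as stated or proved in the paper. Your instinct that such an extension would require either invariance hypotheses on $\varphi$ or a spatial argument is correct, and your sketch is reasonable, but for the purposes of matching the paper you are doing strictly more work than is being asked. The paper is content to treat $\tilde\beta$ as defined on the dense $\ast$-subalgebra, leaving the passage to the closure implicit (and, indeed, somewhat informal).
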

\begin{proof}
  Let $x=\sum_{n \in \ZZ}x_{n} U_{n}$, $x_n \in M$.
  Then note that
  \begin{align}\label{crosde}
    \|x\|_{2, \tilde{\varphi}}^{2}= \sum_{n \in \ZZ}\|x_{n}\|_{2, \varphi}^{2}\punkt
  \end{align}
  It follows from the Equation~\eqref{crosde} that if $\sum_{n \in \ZZ} x_nU_n=0$, then $x_n=0$.
  If
  \begin{align*}
      x & = \sum_{n \in \ZZ}a_{n} U_{n}= \sum_{n \in \ZZ}b_{n} U_{n}=y &  a_n,~ b_n \in M,~ n \in \ZZ\,,
  \end{align*}
  then $a_{n}=b_{n}$. 
  So, one has
  \begin{align*}
    \sum_{ n \in \ZZ}\beta(a_{n})z_{n} U_{n}= \sum_{ n \in \ZZ}\beta(b_{n})z_{n} U_{n}\punkt
  \end{align*}
  Hence $\tilde{\beta}(x)=\tilde{\beta}(y)$. 
  Therefore $\tilde{\beta}$ is well-defined.
\end{proof}
\begin{thm}
Let $\tilde{\beta}$ be the map defined in the Equation~\eqref{DEFAUT}.
Then the following are equivalent
\begin{enumerate}
\item   $\tilde{\beta}\in \Aut(M \rtimes_{\alpha}\ZZ)$,
\item $z_{n+s}=z_{n}\alpha_{n}(z_{s})$
  and
  $\beta(\alpha_{n}(x))z_{n}=z_{n} \alpha_{n}(\beta(x))$ for all $n, s \in \ZZ$ and
  $x \in M$.
\end{enumerate}
\end{thm}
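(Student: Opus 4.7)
The plan is to verify the equivalence by direct computation on the generators $\{a \in M\} \cup \{U_n : n \in \ZZ\}$ of the crossed product, using the fundamental covariance relation $U_n x = \alpha_n(x) U_n$ for $x \in M$. By normality of $\beta$, once the $\ast$-homomorphism property is established on the algebraic span of monomials $aU_n$, the map $\tilde\beta$ extends uniquely to a normal endomorphism of $M\rtimes_\alpha\ZZ$, so the reduction to generators is justified.

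For the implication $(2)\Rightarrow(1)$, I would first observe that setting $n = s = 0$ in the cocycle relation $z_{n+s}=z_n\alpha_n(z_s)$ forces $z_0 = z_0^2$, hence $z_0 = 1$, so $\tilde\beta|_M = \beta$ and $\tilde\beta$ is unital. The key step is to compute $\tilde\beta((aU_n)(bU_s))$ two ways: directly via $(aU_n)(bU_s) = a\alpha_n(b) U_{n+s}$, giving $\beta(a)\beta(\alpha_n(b)) z_{n+s} U_{n+s}$; and by prospective multiplicativity, $\tilde\beta(aU_n)\tilde\beta(bU_s)$, which after commuting $U_n$ past $\beta(b)z_s$ via covariance becomes $\beta(a) z_n \alpha_n(\beta(b))\alpha_n(z_s) U_{n+s}$. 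Equating the two expressions shows that the cocycle condition and the intertwining condition in (2) are together exactly what is required. For $\ast$-preservation, the special case $0 = n + (-n)$ of the cocycle identity yields $z_{-n} = \alpha_{-n}(z_n^\ast)$, and a direct computation on $(aU_n)^\ast = \alpha_{-n}(a^\ast) U_{-n}$ using the intertwining condition finishes this step. Bijectivity is immediate since $\beta\in\Aut(M)$ and each $z_n$ is unitary, so an explicit inverse is given by the same formula with $\beta^{-1}$ and the cocycle $\tilde z_n = \beta^{-1}(\alpha_n(z_n^\ast))$ (or, equivalently, by verifying injectivity and that the image contains all generators).

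For the converse $(1)\Rightarrow(2)$, I would apply $\tilde\beta$ directly to the defining relations of the crossed product. From $U_n U_s = U_{n+s}$ one extracts $z_n U_n z_s U_s = z_{n+s} U_{n+s}$; after commuting $U_n$ past $z_s$ via covariance, the cocycle identity $z_{n+s} = z_n\alpha_n(z_s)$ drops out. Applying $\tilde\beta$ to the covariance relation $U_n x U_n^\ast = \alpha_n(x)$ for $x \in M$ similarly yields $z_n \alpha_n(\beta(x)) z_n^\ast = \beta(\alpha_n(x))$, which is the intertwining condition after rearrangement.

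The hard part is not any single deep step but rather the careful bookkeeping of noncommutative manipulations, and in particular the $\ast$-preservation, which requires combining both conditions in (2) together with the derived identity $z_{-n} = \alpha_{-n}(z_n^\ast)$. No nontrivial analytic input is needed beyond the automatic normal extension of $\beta$, since multiplication by the fixed unitaries $z_n U_n$ is weakly continuous on bounded sets.
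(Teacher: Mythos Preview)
Your proposal is correct and follows essentially the same route as the paper: both directions are handled by expanding $\tilde\beta$ on products of monomials $aU_n\cdot bU_s$ using the covariance relation $U_n x=\alpha_n(x)U_n$, then matching coefficients to extract or use the cocycle and intertwining identities. You are in fact more careful than the paper, which checks only multiplicativity for $(2)\Rightarrow(1)$ and omits the $\ast$-preservation and bijectivity arguments you supply; one small caveat is that your explicit inverse cocycle should read $\tilde z_n=\beta^{-1}(z_n^\ast)$ rather than $\beta^{-1}(\alpha_n(z_n^\ast))$, but your alternative via injectivity and surjectivity on generators is already sufficient.
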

\begin{proof}
  Let $\tilde{\beta} \in \Aut(M \rtimes_{\alpha}\ZZ)$. 
  Let $m, r \in M$ and $z_s \in \U(M)$ for all $s \in \ZZ$ and $\alpha \colon \ZZ \rightarrow \Aut(M)$ be the monomorphism with the covariant relation $\alpha_{n}(x)=U_{n}xU_n^{\ast}$.
  Since $\tilde{\beta}(m U_{n})=\beta
  (m)z_{n}U_{n}$, we have
  \begin{align}
    \label{coclerln}\tilde{\beta}(mU_{n}rU_{s})=\tilde{\beta}(mu_{n})\tilde{\beta}(rU_{s}) \punkt
  \end{align}
  Consequently, by the Equation~\eqref{coclerln} one has
  \begin{align}\label{impeq}
    \tilde{\beta}(m \alpha_{n}(r)U_{n+s})
      & = \tilde{\beta}(mU_{n}rU_{s})\\
      & = \beta(m)z_{n} U_{n} \beta(r)z_{s}U_{s}\\
      & = \beta(m)z_{n} \alpha_{n}(\beta(r))U_{n}z_{s}U_{n}^{\ast}U_{n+s}\\
      & = \beta(m)z_{n}\alpha_{n}(\beta(r))\alpha_{n}(z_{s})U_{n+s}\,.
  \end{align}
  By the Definition of $\tilde{\beta}$ and combining the Equation~\eqref{impeq}, it follows that
  \begin{align}\label{auteqn}
    \beta(m)\beta(\alpha_{n}(r))z_{n+s}U_{n+s}=\beta(m)z_{n}\alpha_{n}(\beta(r)z_{s})U_{n+s} \,.
  \end{align}
  Plugging $r=\id, m=\id$ into the Equation~\eqref{auteqn}, we obtain
  \begin{align}
    \label{Co}
    z_{n+s}=z_{n}\alpha_{n}(z_{s}) \,.
  \end{align}
  Inserting $m=\id$ in the Equation~\eqref{auteqn}, we have
  \begin{align*}
      \beta(\alpha_{n}(r))z_{n+s}=z_{n} \alpha_{n}(\beta(r))\alpha_{n}(z_{s})
    \end{align*}
    and by using the Equation~\eqref{Co}, we have
    \begin{align*}
     \beta(\alpha_{n}(r))z_{n}=z_{n} \alpha_{n}(\beta(r))\punkt
  \end{align*}
  Conversely, let  $z_{n+s}=z_{n}\alpha_{n}(z_{s})$
  and
  $\beta(\alpha_{n}(x))z_{n}=z_{n} \alpha_{n}(\beta(x))$ for all $n, s \in \ZZ$ and
  $x \in M$.
Then, we calculate that
\begin{align*}
  \tilde{\beta}\left(\sum_{n, m \in \ZZ}a_{n} U_{n} b_{m} U_{m}\right)
   &=\tilde{\beta}\left(\sum_{n, m \in \mathbb{Z}}a_{n} \alpha_{n}(b_{m})U_{n+m}\right) \\
   & =\sum_{n, m \in \ZZ}\beta(a_{n})\beta(\alpha_{n}(b_{m}))z_{n+m}U_{n+m}\\
   & =\sum_{n, m \in \ZZ}\beta(a_{n})\beta(\alpha_{n}(b_{m}))z_{n} \alpha_{n}(z_{m})U_{n+m}\\
   & =\sum_{n, m \in \ZZ}\beta(a_{n})z_{n}\alpha_{n}(\beta(b_{m})\alpha_{n}(z_{m})U_{n+m}\,.
\end{align*}
Note that
\begin{align*}
  \tilde{\beta}\left(\sum_{n \in \ZZ}a_{n} U_{n}\right)\tilde{\beta}\left(\sum_{m \in \mathbb{Z}}b_{m} U_{m}\right)
   & =\left(\sum_{n \in \ZZ}\beta(a_{n})z_{n} U_{n}\right) 
      \left(\sum_{m \in \mathbb{Z}}\beta(b_{m}) z_{m} U_{m}i\right) \\
   & =\sum_{n, m \in \ZZ}\beta(a_{n})z_{n} U_{n} \beta(b_{m})z_{m} U_{m}                          \\
   & =\sum_{n, m \in \ZZ}\beta(a_{n})z_{n} \alpha_{n}(\beta(b_{m}))U_{n}z_{m} U_{n}^{\ast}U_{n+m}    \\
   & =\sum_{n , m \in \ZZ}\beta(a_{n})z_{n} \alpha_{n}(\beta(b_{m}))\alpha_{n}(z_{m})U_{n+m}\,.
\end{align*}
Therefore $\tilde{\beta} \in \Aut(M\rtimes_{\alpha}\ZZ)$.
\end{proof}
Now we discuss necessary and sufficient conditions to extend a given automorphism from a factor to the twisted crossed product.
\subsection{Extension of automorphism to twisted crossed product}
\label{ssec:Extensionofautomorphismtotwistedcrossedproduct} Let $M \subseteq
\mathcal{B}(L^{2}((M, \varphi))$ be a factor and in standard form equipped with a faithful normal state $\varphi$.
Let $\gamma \colon \ZZ \times \ZZ \longrightarrow \U(1)$ be a 2-cocycle and $\alpha \colon \ZZ \rightarrow \Aut(M)$ be the monomorphism.
 We will consider the crossed product twisted by $\gamma$ given by
\begin{align*}
 M \rtimes_{\alpha, \gamma}\ZZ=(x, U_{n})
 \subseteq \mathcal{B}(L^{2}(\ZZ, L^{2}(M, \varphi)
 ) && x\in M, ~n \in \ZZ\,.
\end{align*}
We have the following cocycle relations:
\begin{align*}
  & \alpha_{n} \circ \alpha_{m}= \Ad \gamma(n, m) \circ \alpha_{n+m}&& \alpha_{n} \in \Aut(M), n, m \in \ZZ\\
  & \gamma(n, m) \gamma(n+m, \ell)=\alpha_{n}(\gamma(m, \ell)) \gamma(n, m+\ell)&& \ell \in \ZZ\\
  & U_{n} U_{m}=\gamma(n, m) U_{n+m}\\
  & U_{n}xU_{n}^{\ast}=\alpha_{n}(x)\,.
\end{align*}
We write any generic element of the twisted crossed product as $a:=\sum_{n \in \ZZ}a_{n} U_{n},
a_{n} \in M$.
Recall that $\Omega_{\varphi}$ is a cyclic and separating vector for $M$. 
Note that
$\Omega_{\varphi}\otimes \delta_{0}$ is a cyclic and separating vector for $M \rtimes_{\alpha, \gamma}\ZZ$.
The associated state $\tilde{\varphi}$ corresponding to $\varphi$ on $M \rtimes_{\alpha, \gamma}\ZZ$ is given by 
\begin{align}
    \tilde{\varphi}\left(\sum_{n \in \ZZ} a_{n} U_{n}\right)&=\varphi(a_{0})\,.
\end{align}
For $\beta\in \Aut(M)$, the map
$\hat{\beta} \colon M \rtimes_{\alpha, \gamma}\ZZ\rightarrow M \rtimes_{\alpha, \gamma}
\ZZ$
is given by
\begin{align}\label{DEFm}
  \hat{\beta}\left(\sum_{n \in \ZZ} a_{n} U_{n})\right)&=\sum_{n \in \ZZ} \beta(a_{n})z_{n} U_{n}\,.
\end{align}
Now we are going to prove similar result as Lemma \ref{WED}.\\

\begin{lem}\label{WEL}
  The map $\hat{\beta}\colon M \rtimes_{\alpha, \gamma}\ZZ \longrightarrow M \rtimes_{\alpha, \gamma}\ZZ$ is well-defined map.
\end{lem}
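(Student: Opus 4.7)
The plan is to mimic the argument of Lemma~\ref{WED} almost verbatim, with attention only to the places where the 2-cocycle $\gamma$ could interfere. The point is that the $\U(1)$-valued twist does not disturb the orthogonality of the translates $U_n$ with respect to the canonical conditional expectation onto $M$ induced by $\tilde{\varphi}$.

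First, I would verify that the Parseval-type identity
\begin{align*}
  \Big\|\sum_{n\in\ZZ} a_n U_n\Big\|_{2,\tilde{\varphi}}^{2} = \sum_{n\in\ZZ}\|a_n\|_{2,\varphi}^{2}
\end{align*}
continues to hold in the twisted setting. This follows by computing $\tilde{\varphi}\left((\sum_n a_n U_n)^{\ast}(\sum_m a_m U_m)\right)$, expanding using $U_n U_m = \gamma(n,m)U_{n+m}$ and $U_n x U_n^{\ast}=\alpha_n(x)$, and observing that only the diagonal terms $n=m$ survive because $\tilde{\varphi}$ picks out the coefficient of $U_0$, while the off-diagonal scalars $\gamma(\cdot,\cdot)$ live in $\U(1)$ and thus do not affect the norm computation of the surviving diagonal terms.

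From this identity, if $\sum_{n\in\ZZ} a_n U_n = \sum_{n\in\ZZ} b_n U_n$, then writing the difference as a single sum and applying the Parseval identity forces $a_n = b_n$ for all $n \in \ZZ$. Consequently
\begin{align*}
  \sum_{n\in\ZZ}\beta(a_n) z_n U_n = \sum_{n\in\ZZ}\beta(b_n) z_n U_n,
\end{align*}
so the formula defining $\hat{\beta}$ in Equation~\eqref{DEFm} is independent of the representation of the element as a formal series. Hence $\hat{\beta}$ is well-defined on the algebraic span of $\{a U_n : a \in M,\ n \in \ZZ\}$, and one extends it to the full twisted crossed product by continuity in the strong operator (or $\|\cdot\|_{2,\tilde{\varphi}}$) topology, which is legitimate since $\beta \in \Aut(M)$ is normal and the $z_n$ are unitaries.

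The main (minor) obstacle is purely bookkeeping: making sure the twist $\gamma$ does not produce off-diagonal contributions to $\tilde{\varphi}$ that would break the Parseval identity. This is handled by the fact that $\gamma$ takes values in $\U(1)$, so after multiplying out, the phases $\gamma(n,-n)$ that appear in computing $U_n^{\ast}U_n$-type terms are absorbed into the unit circle and disappear under the absolute value. Everything else is identical to the proof of Lemma~\ref{WED}.
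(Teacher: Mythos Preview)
Your approach is essentially the paper's: both reduce well-definedness to uniqueness of the Fourier coefficients by computing $\|\sum_n a_n U_n\|_{2,\tilde\varphi}^2$ and observing that only diagonal terms survive, and the paper carries out explicitly (with the separating-vector argument) the computation you sketch. One small caveat: the Parseval identity as you state it is not literally correct unless $\varphi$ is $\alpha$-invariant---the diagonal sum is $\sum_n \varphi\bigl(\alpha_{-n}(a_n^\ast a_n)\bigr)$ rather than $\sum_n\|a_n\|_{2,\varphi}^2$---but since $\varphi$ is faithful and each $\alpha_{-n}$ is an automorphism, this sum still vanishes iff every $a_n=0$, so your conclusion is unaffected.
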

\begin{proof}
  Let $x=\sum x_{n} U_{n} =0$, where $x_n \in M$.
  We claim that $x_{n}=0$.
   Note that
  \begin{align*}
    0=\|x\|_{2, \tilde{\varphi}}^{2} & =\tilde{\varphi}(x^{\ast}x)\\
                                     & =\tilde{\varphi}\left(\sum_{n, m \in \mathbb{Z}}U_{n}^{*}x_{n}^{\ast}x_{m}U_{m}\right)\\
                                     & = \tilde{\varphi}\left(\sum_{n, m \in \ZZ}\gamma(-n, n)^{*}U_{-n}x_{n}^{\ast}U_{-n}^{\ast}U_{-n}x_{m} U_{-n}^{\ast}U_{-n}U_{m}\right) &(\text{by }U_{n}^{\ast}=\gamma(-n, n)^{\ast}U_{-n}) \\
                                     & =\tilde{\varphi}\left(\sum_{n, m \in \ZZ}\gamma(-n, n)^{\ast}\alpha_{-n}(x_{n}^{\ast})\alpha_{-n}(x_{m})\gamma(-n, m) U_{-n+m}\right)\\
                                     & =\varphi\left(\sum_{n \in \ZZ}\gamma(-n, n)^{\ast}\alpha_{-n}(x_{n}^{\ast}x_{n})\gamma(-n, n)\right)\,.
  \end{align*}
  As $\varphi$  is a normal state on $M$, we get                                                            
  \begin{align}
    \label{Stateeq}
     0=\sum_{n}\langle \gamma(-n, n)^{\ast}\alpha_{-n}(x_{n}^{\ast}x_{n})\gamma(-n, n) \Omega_{\varphi}, \Omega_{\varphi}\rangle\punkt
  \end{align}
  Observe that
  $\gamma(-n, n)^{\ast}\alpha_{-n}(x_{n}^{\ast}x_{n})\gamma(-n, n)$ is in particular positive. 
  Write
  \begin{align}\label{INTEQ}
    \gamma(-n, n)^{\ast}\alpha_{-n}(x_{n}^{\ast}x_{n})\gamma(-n, n)=y_{n}\punkt
  \end{align}
  Using Equation~\eqref{INTEQ} and inserting $y_n$ in the Equation~\eqref{Stateeq}, we have
  \begin{align*}
    0=\sum_{n}\|y_{n}^{1/2}\Omega_{\varphi}\|^{2}
  \end{align*}
  and therefore
  \begin{align*}
  y_{n}^{1/2}\Omega_{\varphi}=0\punkt
  \end{align*}
  Thus $y_{n}^{1/2}z'\Omega_{\varphi}=0$ where $y_{n}\in M$ and $z \in M' \cap \mathcal{B}(L^{2}(M, \varphi))$. As $\Omega_{\varphi}$ is a separating vector for $M$ and therefore cyclic for $M' \cap \mathcal{B}(L^{2}(M, \varphi))$,  we have
  $y_{n}^{1/2}=0$. 
  Hence $y_{n}^{\frac{1}{2}}=0$ implies $y_{n}=0$ because $y_{n}$ is positive. 
Therefore $\alpha_{-n}(x_{n}^{\ast}x_{n})=0$, which implies
  $x_{n}=0$. 
  Hence $\hat{\beta}$ is well-defined.
\end{proof}
\begin{thm}\label{extcptw}
Let $\hat{\beta}$ be the map defined in the Equation~\eqref{DEFm}. 
Then the following conditions are equivalent:
\begin{enumerate}
\item $\hat{\beta}\in \Aut(M \rtimes_{\alpha, \gamma}\ZZ)$
\item $\beta(\gamma(n, s))z_{n+s}=
  z_{n}\alpha_{n}(z_{s})\gamma(n, s)$ and
  $\beta(\alpha_{n}(x))z_{n}=z_{n} \alpha_{n}(\beta(x)) \text{ for all } n, s \in \ZZ, x \in M$.
\end{enumerate}
\end{thm}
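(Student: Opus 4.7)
The plan is to follow closely the structure of the preceding theorem (the untwisted analogue), carrying the extra $\gamma$-factors through every step. Well-definedness is already handled by Lemma~\ref{WEL}, so the substantive content of the theorem is that the two listed algebraic conditions are exactly what is required to make $\hat\beta$ multiplicative on the generating set $\{a U_n : a\in M,\, n\in \ZZ\}$; $\ast$-preservation and bijectivity then follow from unitarity of the $z_n$ together with the fact that $\beta$ is itself a $\ast$-automorphism of $M$.

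For the direction $(1)\Rightarrow(2)$, I would expand both sides of the identity $\hat\beta(m U_n \cdot r U_s) = \hat\beta(m U_n)\hat\beta(r U_s)$ for arbitrary $m, r \in M$ and $n, s \in \ZZ$. Using the commutation relation $U_n r = \alpha_n(r) U_n$ together with the twisted multiplication $U_n U_s = \gamma(n,s)\, U_{n+s}$, the left-hand side becomes $\beta(m)\,\beta(\alpha_n(r))\,\beta(\gamma(n,s))\, z_{n+s}\, U_{n+s}$, while the right-hand side becomes $\beta(m)\, z_n\,\alpha_n(\beta(r))\,\alpha_n(z_s)\,\gamma(n,s)\, U_{n+s}$. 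Setting $m = r = 1$ isolates condition~(1), namely $\beta(\gamma(n,s))\, z_{n+s} = z_n\,\alpha_n(z_s)\,\gamma(n,s)$. Substituting this back into the general equation and cancelling the unitary $\alpha_n(z_s)\,\gamma(n,s)$ on the right then produces condition~(2), $\beta(\alpha_n(r))\, z_n = z_n\,\alpha_n(\beta(r))$.

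For the converse $(2)\Rightarrow(1)$, I would run the same chain of equalities in reverse. Condition~(2) allows rewriting $\beta(\alpha_n(r))$ as $z_n\,\alpha_n(\beta(r))\, z_n^{\ast}$, and condition~(1) then rewrites $\beta(\gamma(n,s))\, z_{n+s}$ as $z_n\,\alpha_n(z_s)\,\gamma(n,s)$; together these assemble the required identity on generators, and linearity plus normality extend it to the whole algebra. The main subtlety, compared with the untwisted case, is that one cannot derive a pure cocycle relation $z_{n+s} = z_n\,\alpha_n(z_s)$ in the $z$-variables alone: the cocycle $\gamma$ intertwines nontrivially with $\beta$, which is precisely why condition~(1) carries the asymmetric pair $\beta(\gamma(n,s))$ on the left and $\gamma(n,s)$ on the right. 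This bookkeeping of the $\gamma$-factors is the only genuine obstacle; everything else is a direct adaptation of the proof already given in the untwisted case.
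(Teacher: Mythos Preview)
Your proposal is correct and follows essentially the same approach as the paper: both directions proceed by expanding $\hat\beta(mU_n\, rU_s)$ versus $\hat\beta(mU_n)\hat\beta(rU_s)$ on generators, specializing to $m=r=1$ to isolate the cocycle-type identity, and then substituting back to obtain the intertwining relation. Your remark that $\ast$-preservation and bijectivity follow from unitarity of the $z_n$ and the fact that $\beta\in\Aut(M)$ is a point the paper leaves implicit, so if anything your outline is slightly more complete.
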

\begin{proof}
  Firstly, assume that $\hat{\beta}\in \Aut(M \rtimes_{\alpha, \gamma}\ZZ)$.
  Let $m, r \in M$.
   Note that
  $\hat{\beta}(mU_{n})=\beta(m)z_{n}U_{n}$. 
  Thus we get
  \begin{align*}
    \hat{\beta}(mU_{n} r U_{s}) & =\hat{\beta}(mU_{n})\hat{\beta}(rU_{s})                                             \\
                        & =\beta(m)z_{n} U_{n} \beta(r)z_{s} U_{s}                                \\
                          & =\beta(m)z_{n}\alpha_{n}(\beta(r))\alpha_{n}(z_{s})\gamma(n, s) U_{n+s}
  \end{align*}
  and this gives
  \begin{align*}
     \hat{\beta}(m \alpha_{n}(r) \gamma(n, s) U_{n+s})=\beta(m)z_{n}\alpha_{n}(\beta(r))\alpha_{n}(z_{s})\gamma(n, s) U_{n+s}\,.
     \end{align*}
     Consequently, we get
     \begin{align}\label{INTEQ1}
      \beta(m)\beta(\alpha_{n}(r))\beta(\gamma(n, s))z_{n+s}U_{n+s}=\beta(m)z_{n}\alpha_{n}(\beta(r))\alpha_{n}(z_{s})\gamma(n, s) U_{n+s}\punkt
  \end{align}
  Inserting $r=\id, m=\id$ in the Equation~\eqref{INTEQ1}, we obtain
  \begin{align}
    \label{twistedcocyle}
    \beta(\gamma(n, s))z_{n+s}=z_{n}\alpha_{n}(z_{s})\gamma(n, s)\,.
  \end{align}
  Again inserting $m=\id$ in the Equation~\eqref{INTEQ1} and using the  Equation~\eqref{twistedcocyle}, we get
  \begin{align*}
    \beta(\alpha_{n}(r))z_{n}\alpha_{n}(z_{s})\gamma(n, s)=z_{n}\alpha_{n}(\beta(r))\alpha_{n}(z_{s})\gamma(n, s)
  \end{align*}
  and consequently
  \begin{align}
    \beta(\alpha_{n}(r))z_{n}=z_{n}\alpha_{n}(\beta(r)).
  \end{align}
  Conversely, we have
  \begin{align*}
     \hat{\beta}\left(\sum_{n \in \ZZ}a_{n} U_{n}\sum_{m \in \ZZ}b_{m}U_{m}\right)
     & =\hat{\beta}\left(\sum_{n, m \in \ZZ}a_{n}U_{n}b_{m}U_{m}i\right)\\
     & =\hat{\beta}\left(\sum_{n, m \in \ZZ}a_{n}\alpha_{n}(b_{m})\gamma(n, m)U_{n+m}\right)\\
     & =\sum_{n, m \in \ZZ}\beta(a_{n}) \beta(\alpha_{n}(b_{m}))\beta(\gamma(n, m))z_{n+m}U_{n+m}\\
     & =\sum_{n \in \ZZ}\beta(a_{n})\beta(\alpha_{n}(b_{m}))z_{n} \alpha_{n, m}(z_{m})\gamma(n, m) U_{n+m}\\
     & =\sum_{n, m \in \ZZ}\beta(a_{n})z_{n} \alpha_{n}(\beta(b_{m}))\alpha_{n}(z_{m})\gamma( n, m)U_{n+m}\,.
  \end{align*}
  Moreover, we get
  \begin{align*}
     \hat{\beta}\left(\sum_{n \in \ZZ}a_{n} U_{n}\right) \hat{\beta}\left(\sum_{m \in \ZZ}b_{m} U_{m}\right)
     & =\left(\sum_{n \in \ZZ}\beta(a_{n})z_{n}U_{n}\right)\left(\sum_{m}\beta(b_{m})z_{m} U_{m}\right)\\
     & =\sum_{n, m \in \ZZ}\beta(a_{n})z_{n} U_{n}\beta(b_{m})z_{m}U_{m}\\
     & =\sum_{n, m \in \ZZ}\beta(a_{n})z_{n} \alpha_{n}(\beta(b_{m}))\alpha_{n}(z_{m})\gamma(n, m) U_{n+m}\,.
  \end{align*}
  Hence, it follows that $\hat{\beta}\in \Aut(M \rtimes_{\alpha, \gamma}\ZZ)$,
  which concludes the proof.
\end{proof}

\end{appendix}

\def\cprime{$'$}\newcommand{\noopsort}[1]{}
\begin{bibdiv}
\begin{biblist}

\bib{BiDeGi2023}{article}{
      author={Bischoff, Marcel},
      author={Del~Vecchio, Simone},
      author={Giorgetti, Luca},
       title={Quantum operations on conformal nets},
        date={2023},
        ISSN={0129-055X,1793-6659},
     journal={Rev. Math. Phys.},
      volume={35},
      number={4},
       pages={Paper No. 2350007, 30},
         url={https://doi.org/10.1142/S0129055X23500071},
      review={\MR{4589335}},
}

\bib{BrGuLo1993}{article}{
      author={Brunetti, R.},
      author={Guido, D.},
      author={Longo, R.},
       title={Modular structure and duality in conformal quantum field theory},
        date={1993},
        ISSN={0010-3616,1432-0916},
     journal={Comm. Math. Phys.},
      volume={156},
      number={1},
       pages={201\ndash 219},
         url={http://projecteuclid.org/euclid.cmp/1104253522},
      review={\MR{1234110}},
}

\bib{Bi2011}{article}{
      author={Bischoff, Marcel},
       title={{Models in Boundary Quantum Field Theory Associated with Lattices
  and Loop Group Models}},
        date={2011},
     journal={Arxiv preprint arXiv:1108.4889},
}

\bib{Bo1974}{book}{
      author={Bourbaki, N.},
       title={\'{E}l\'{e}ments de math\'{e}matique. {T}opologie
  g\'{e}n\'{e}rale. {C}hapitres 5 \`a 10},
   publisher={Hermann, Paris},
        date={1974},
        ISBN={2-7056-5692-8},
      review={\MR{3822133}},
}

\bib{ChGuLiWe2013}{article}{
      author={Chen, Xie},
      author={Gu, Zheng-Cheng},
      author={Liu, Zheng-Xin},
      author={Wen, Xiao-Gang},
       title={Symmetry protected topological orders and the group cohomology of
  their symmetry group},
        date={2013Apr},
     journal={Phys. Rev. B},
      volume={87},
       pages={155114},
         url={https://link.aps.org/doi/10.1103/PhysRevB.87.155114},
}

\bib{Co1973}{article}{
      author={Connes, Alain},
       title={{Une classification des facteurs de type {${\rm} III$}}},
        date={1973},
     journal={Ann. Sci. École Norm. Sup.(4)},
      volume={6},
       pages={133–252},
}

\bib{Co1977}{article}{
      author={Connes, Alain},
       title={Periodic automorphisms of the hyperfinite factor of type
  $\mathrm{II}_1$},
        date={1977},
        ISSN={0001-6969},
     journal={Acta Sci. Math. (Szeged)},
      volume={39},
      number={1-2},
       pages={39\ndash 66},
      review={\MR{0448101}},
}

\bib{DoLo1984}{article}{
      author={Doplicher, S.},
      author={Longo, R.},
       title={{Standard and split inclusions of von {N}eumann algebras}},
        date={1984},
        ISSN={0020-9910},
     journal={Invent. Math.},
      volume={75},
      number={3},
       pages={493–536},
         url={http://dx.doi.org/10.1007/BF01388641},
      review={\MR{735338 (86h:46095)}},
}

\bib{DR1975}{article}{
      author={Driessler, W.},
       title={{Comments on lightlike translations and applications in
  relativistic quantum field theory}},
        date={1975},
        ISSN={0010-3616},
     journal={Comm. Math. Phys.},
      volume={44},
      number={2},
       pages={133–141},
      review={\MR{0416336 (54 \#4411)}},
}

\bib{DeGi2018}{article}{
      author={Del~Vecchio, Simone},
      author={Giorgetti, Luca},
       title={Infinite index extensions of local nets and defects},
        date={2018},
        ISSN={0129-055X},
     journal={Rev. Math. Phys.},
      volume={30},
      number={2},
       pages={1850002, 58},
         url={https://doi.org/10.1142/S0129055X18500022},
      review={\MR{3757743}},
}

\bib{DoXu2006}{article}{
      author={Dong, Chongying},
      author={Xu, Feng},
       title={{Conformal nets associated with lattices and their orbifolds}},
        date={2006},
        ISSN={0001-8708},
     journal={Adv. Math.},
      volume={206},
      number={1},
       pages={279–306},
      eprint={math/0411499v2},
         url={http://dx.doi.org/10.1016/j.aim.2005.08.009},
}

\bib{EvGa2022}{article}{
      author={Evans, David~E.},
      author={Gannon, Terry},
       title={Reconstruction and local extensions for twisted group doubles,
  and permutation orbifolds},
        date={2022},
        ISSN={0002-9947,1088-6850},
     journal={Trans. Amer. Math. Soc.},
      volume={375},
      number={4},
       pages={2789\ndash 2826},
         url={https://doi.org/10.1090/tran/8575},
      review={\MR{4391734}},
}

\bib{FrReSc1989}{article}{
      author={Fredenhagen, K.},
      author={Rehren, K.-H.},
      author={Schroer, B.},
       title={{Superselection sectors with braid group statistics and exchange
  algebras. {I}.\ {G}eneral theory}},
        date={1989},
        ISSN={0010-3616},
     journal={Comm. Math. Phys.},
      volume={125},
      number={2},
       pages={201–226},
         url={http://projecteuclid.org/getRecord?id=euclid.cmp/1104179464},
      review={\MR{1016869 (91c:81047)}},
}

\bib{GaFr1993}{article}{
      author={Gabbiani, Fabrizio},
      author={Fröhlich, Jürg},
       title={{Operator algebras and conformal field theory}},
        date={1993},
        ISSN={0010-3616},
     journal={Comm. Math. Phys.},
      volume={155},
      number={3},
       pages={569–640},
}

\bib{GuLo1996}{article}{
      author={Guido, Daniele},
      author={Longo, Roberto},
       title={The conformal spin and statistics theorem},
        date={1996},
        ISSN={0010-3616},
     journal={Comm. Math. Phys.},
      volume={181},
      number={1},
       pages={11\ndash 35},
         url={http://projecteuclid.org/euclid.cmp/1104287623},
      review={\MR{1410566 (98c:81121)}},
}

\bib{Hen17}{incollection}{
      author={Henriques, Andr\'{e}},
       title={The classification of chiral {WZW} models by
  {$H^4_+(BG,\mathbb{Z})$}},
        date={2017},
   booktitle={Lie algebras, vertex operator algebras, and related topics},
      series={Contemp. Math.},
      volume={695},
   publisher={Amer. Math. Soc., Providence, RI},
       pages={99\ndash 121},
         url={https://doi-org.proxy.library.ohio.edu/10.1090/conm/695/13998},
      review={\MR{3709708}},
}

\bib{IzLoPo1998}{article}{
      author={Izumi, Masaki},
      author={Longo, Roberto},
      author={Popa, Sorin},
       title={{A {G}alois correspondence for compact groups of automorphisms of
  von {N}eumann algebras with a generalization to {K}ac algebras}},
        date={1998},
        ISSN={0022-1236},
     journal={J. Funct. Anal.},
      volume={155},
      number={1},
       pages={25–63},
         url={http://dx.doi.org/10.1006/jfan.1997.3228},
      review={\MR{1622812 (2000c:46117)}},
}

\bib{Jo1980}{article}{
      author={Jones, Vaughan F.~R.},
       title={Actions of finite groups on the hyperfinite type {${\rm
  II}_{1}$}\ factor},
        date={1980},
        ISSN={0065-9266},
     journal={Mem. Amer. Math. Soc.},
      volume={28},
      number={237},
       pages={v+70},
         url={http://dx.doi.org/10.1090/memo/0237},
      review={\MR{587749}},
}

\bib{Ka1998}{book}{
      author={Kac, V.G.},
       title={{Vertex algebras for beginners}},
   publisher={Amer Mathematical Society},
        date={1998},
        ISBN={082181396X},
}

\bib{Lo1979}{article}{
      author={Longo, Roberto},
       title={{Notes on algebraic invariants for noncommutative dynamical
  systems}},
        date={1979},
        ISSN={0010-3616},
     journal={Comm. Math. Phys.},
      volume={69},
      number={3},
       pages={195–207},
         url={http://projecteuclid.org/getRecord?id=euclid.cmp/1103905488},
      review={\MR{550019 (80j:46108)}},
}

\bib{LoRe1995}{article}{
      author={Longo, Roberto},
      author={Rehren, Karl-Henning},
       title={{Nets of Subfactors}},
        date={1995},
     journal={Rev. Math. Phys.},
      volume={7},
       pages={567–597},
      eprint={arXiv:hep-th/9411077},
}

\bib{Mg2005}{article}{
      author={Müger, Michael},
       title={{Conformal Orbifold Theories and Braided Crossed G-Categories}},
        date={2005},
        ISSN={0010-3616},
     journal={Comm. Math. Phys.},
      volume={260},
       pages={727–762},
         url={http://dx.doi.org/10.1007/s00220-005-1291-z},
}

\bib{Po1994-2}{article}{
      author={Popa, Sorin},
       title={Classification of amenable subfactors of type {II}},
        date={1994},
        ISSN={0001-5962},
     journal={Acta Math.},
      volume={172},
      number={2},
       pages={163\ndash 255},
         url={http://dx.doi.org/10.1007/BF02392646},
      review={\MR{1278111 (95f:46105)}},
}

\bib{Su1980}{article}{
      author={Sutherland, Colin~E.},
       title={Cohomology and extensions of von {N}eumann algebras. {I}, {II}},
        date={1980},
        ISSN={0034-5318},
     journal={Publ. Res. Inst. Math. Sci.},
      volume={16},
      number={1},
       pages={105\ndash 133, 135\ndash 174},
         url={https://doi.org/10.2977/prims/1195187501},
      review={\MR{574031}},
}

\bib{WaWo2015}{article}{
      author={Wagemann, Friedrich},
      author={Wockel, Christoph},
       title={A cocycle model for topological and {L}ie group cohomology},
        date={2015},
        ISSN={0002-9947},
     journal={Trans. Amer. Math. Soc.},
      volume={367},
      number={3},
       pages={1871\ndash 1909},
  url={https://doi-org.proxy.library.ohio.edu/10.1090/S0002-9947-2014-06107-2},
      review={\MR{3286502}},
}

\end{biblist}
\end{bibdiv}
\address
\end{document}